\pgfplotsset{compat=newest}
\numberwithin{equation}{section}
\theoremstyle{plain} 
\newtheorem{theorem}{Theorem}[section]
\newtheorem{lemma}[theorem]{Lemma}
\newtheorem{corollary}[theorem]{Corollary}
\newtheorem{proposition}[theorem]{Proposition}
\newtheorem{remark}[theorem]{Remark}
\newtheorem{definition}[theorem]{Definition}
\newtheorem{assumption}[theorem]{Assumption}
\theoremstyle{definition}
\newtheorem{notation}[theorem]{Notation}
\DeclareMathOperator{\erf}{erf}
\renewcommand{\Re}{\operatorname{Re}}
\renewcommand{\Im}{\operatorname{Im}}
\DeclareMathOperator{\E}{\mathbf{E}}
\DeclareMathOperator{\V}{\mathbf{Var}}
\renewcommand{\P}{{\mathbf P}}
\newcommand{\cO}{\mathcal{O}}
\newcommand{\co}{{\scriptstyle\mathcal{O}}}
\newcommand{\R}{{\mathbb R }}
\newcommand{\N}{{\mathbb N}}
\newcommand{\C}{{\mathbb C}}
\newcommand{\OO}{O}
\newcommand{\eps}{\epsilon}
\newcommand{\ii}{\mathrm{i}}
\newcommand{\dd}{\operatorname{d}\!{}}
\newcommand{\dif}{\operatorname{d}\!{}}
\newcommand{\ie}{\emph{i.e., }}
\newcommand{\eg}{\emph{e.g., }}
\newcommand{\cf}{\emph{c.f., }}
\newcommand{\wt}{\widetilde}
\newcommand{\ud}{\underline}
\newcommand{\wh}{\widehat}
\newcommand{\gz}{G^{z}}
\newcommand{\nc}{\normalcolor}
\newcommand{\bs}{\boldsymbol}
\def\ga{G^{z_1}}
\def\gb{G^{z_2}}
\def\ma{\mathfrak{m}^{z_1}}
\def\mb{\mathfrak{m}^{z_2}}
\DeclareMathOperator{\Tr}{Tr}
\def\F{\mathcal{F}}
\def\one{\mathds{1}}
\def\<{\langle}
\def\>{\rangle}
\DeclarePairedDelimiter{\abs}{\lvert}{\rvert}%
\DeclarePairedDelimiterX{\tuple}[1](){#1}
\DeclarePairedDelimiterX{\set}[1]\{\}{#1}
\DeclarePairedDelimiterXPP{\landauO}[1]{\cO}(){}{#1}
\DeclarePairedDelimiterXPP{\landauo}[1]{\co}(){}{#1}
\DeclarePairedDelimiterXPP{\landauOprec}[1]{\cO_\prec}(){}{#1}
\renewcommand{\mathbf}[1]{\bs{#1}}
\DeclareMathOperator{\Spec}{Spec}
\begin{document}

\begin{frontmatter}
\title{On the rightmost eigenvalue of non-Hermitian random matrices}
\runtitle{On the rightmost eigenvalue of non-Hermitian random matrices}

\begin{aug}
\author[A]{\fnms{Giorgio}~\snm{Cipolloni}\ead[label=e1]{gc4233@princeton.edu}},
\author[B]{\fnms{L\'aszl\'o}~\snm{Erd{\H o}s}\ead[label=e2]{laszlo.erdoes@ist.ac.at}},
\author[C]{\fnms{Dominik}~\snm{Schr\"oder}\ead[label=e3]{dschroeder@ethz.ch}}
\and
\author[B]{\fnms{Yuanyuan}~\snm{Xu}\ead[label=e4]{yuanyuan.xu@ist.ac.at}}
\address[A]{Princeton Center for Theoretical Science, Princeton University \printead[presep={,\ }]{e1}}

\address[B]{Institute of Science and Technology Austria \printead[presep={,\ }]{e2,e4}}

\address[C]{Institute for Theoretical Studies, ETH Zurich \printead[presep={,\ }]{e3}}
\end{aug}

\begin{abstract}
We establish a precise three-term asymptotic expansion, with an optimal estimate of the error term,
for the rightmost eigenvalue of an $n\times n$ random matrix with
independent identically distributed complex entries as $n$ tends to infinity.
All terms in the expansion are universal.
\end{abstract}

\begin{keyword}[class=MSC]
\kwd[Primary ]{60B20}
\kwd{15B52}
\end{keyword}

\begin{keyword}
\kwd{Ginibre ensemble}
\kwd{Girko's formula}
\kwd{Gumbel distribution}
\kwd{SUSY method}
\end{keyword}

\end{frontmatter}


\section{Introduction}

Large random matrices are frequently used to model complex systems of many degrees of freedom.
Quantum Hamiltonians are naturally self-adjoint, so their conventional random matrix models
are Hermitian; the most common example is Wigner matrices. Beyond quantum mechanics,
random matrices often appear without any symmetry condition in natural
phenomenological models. For example, the time evolution of many interacting agents ${\mathbf u}= (u_1, u_2, \ldots, u_n)$
may be described by a linear system of differential equations of the form
\begin{equation}\label{ode}
	\frac{\dif}{\dif t} {\mathbf u}(t) = X {\mathbf u}(t).
\end{equation}
Lacking any specific knowledge about how $u_j$ precisely influences the evolution of $u_i$, 
the simplest phenomenological model assumes that  the coefficient matrix $X$ is random.
Despite its simplicity, 
since the ground-breaking paper of May~\cite{4559589}, this model has been extensively used to describe the evolution of complex systems both in theoretical neuroscience, see e.g.~\cite{17155583, 10039285} and in 
mathematical ecology, e.g.~\cite{25768781,26198207}, see also 
a recent comprehensive review~\cite{Allesina2015}.
The problem is often presented in the form~\cite{10039285} 
\begin{equation}\label{de}
	\frac{\dif}{\dif t} {\mathbf u}(t) = (-I + gX) {\mathbf u}(t),
\end{equation}
where the identity matrix stands for a natural exponential decay at unit rate and the coupling constant 
$g>0$ explicitly expresses the strength of the random couplings. 
The main question is to tune $g$ so that the system is stable in the sense that it is
neither exponentially decaying nor exponentially increasing. The maximal growth rate
of the solution of~\eqref{de} is determined by the maximal real part of the spectrum of 
the coefficient matrix $-I + gX$. This motivates the  main task of this paper: to 
understand very accurately the real part of the rightmost eigenvalue of  a large
non-Hermitian random matrix.

We remark that a similar optimal stability question of~\eqref{de} for uniformly random initial data ${\mathbf u}(0)$  and 
when the size of ${\mathbf u}(t)$ measured in $\ell^2$-sense has been answered
in~\cite{PhysRevLett.81.3367,MR1755501} when the coefficients $x_{ij}$ are all Gaussian  and in~\cite{MR3816180,1908.05178} 
for more general distributions even beyond the i.i.d. case. The rightmost eigenvalue studied in the current paper is 
relevant when we consider the worst-case scenario, i.e. when we measure ${\mathbf u}(t)$ in maximum norm
and we take the supremum over all initial data ${\mathbf u}(0)$
(see Corollary~\ref{cor:ode} below).

To be more specific, we consider $n\times n$ random matrices $X$ with independent, identically distributed (i.i.d.) 
matrix elements, called the \emph{i.i.d. matrix ensemble}.  This is the non-Hermitian  counterpart of the Wigner ensemble. We choose the 
normalization  such that $x_{ij} \stackrel{\text{d}}{=} n^{-1/2}\chi$, for all $i,j$, where
$\chi$ is a fixed
complex centred random variable  with  $\E |\chi|^2=1$ and $\E \chi^2=0$.
This normalization guarantees that the spectrum of $X$ 
remains essentially within the unit disk, uniformly in $n$. 
We claim  our result and present the proof only for the complex case  but our basic method works 
for the real case as well. We will comment on the necessary modifications 
that we do not carry out in this paper for brevity. 

More precisely, the celebrated {\it Girko's circular law}, proven in increasing generality 
in~\cite{MR1428519, MR773436, MR2409368}, asserts
that the eigenvalue density of $X$ is uniform on the unit disk of the complex plane.  Furthermore, there are no  outlier eigenvalues
far away 
since the spectral radius $\rho(X)$  converges to $1$, see~\cite{MR863545,MR3813992,MR4408512,MR866352}.
A speed of convergence of order $n^{-1/2+\epsilon}$, for any small $\epsilon>0$, with very high  probability was recently established
in~\cite{MR3770875}.
Nevertheless some extremal eigenvalues do lie outside of the unit disk, hence 
$\max\Re \Spec(X)$ is slightly larger than one. It is well known that eigenvalues
genuinely fluctuate on scale $n^{-1/2}$ near the boundary of the unit disk, in
fact the local eigenvalue statistics in this regime is universal~\cite{MR4221653}.
Therefore we know that 
$$
n^{-1/2}\ll \max\Re\Spec(X) -1 \ll n^{-1/2+\epsilon}
$$ 
holds for any $\epsilon>0$ with very high probability and our goal is to find a more accurate asymptotics.
We remark that this natural question was posed in the first version on the arXiv of~\cite{MR4408512}
in Section 1.1.8. 
Beforehand, a leading order large deviation principle was established for $\max\Re\Spec(X)$
even for the more general elliptic ensemble in~\cite{MR4305622}
and the refined asymptotics was mentioned 
as an open question.

For the complex Gaussian case, i.e., when $\chi$ is a standard complex random variable (\emph{Ginibre ensemble}),
the eigenvalues form a determinantal process with an explicit correlation kernel computed first by Ginibre~\cite{MR173726}.
Based upon these formulas in our companion paper~\cite{2206.04443} we  recently gave a new short proof 
of the exact asymptotics:
\begin{equation}\label{Cmax}
	\max\Re \Spec(X) \stackrel{\text{d}}{=} 
	1 + \sqrt{\frac{\gamma_n}{4n}} + \frac{1}{\sqrt{4n\gamma_n}} \mathfrak{G}_n, \qquad \gamma_n:=
	\frac{\log n-5\log\log n-\log (2\pi^4)}{2},
\end{equation}
where the random variable $\mathfrak{G}_n$ converges to a Gumbel distribution
$$
\lim_{n\to \infty} \P (\mathfrak{G}_n\le t) = \exp{(-e^{-t})}
$$
for any fixed $t\in \R$  with an effective error term.  We also proved a similar result for the real Ginibre ensemble. 
These results without error term 
were first proven by Bender~\cite{MR2594353} in the complex case and by Akemann and Phillips~\cite{MR3192169}
in the real case
even for the more involved Gaussian elliptic ensembles where a sophisticated saddle point analysis for the correlation kernel  was necessary, while our proof in~\cite{2206.04443} is elementary. 
In particular,  
we obtained that typically
\begin{equation}\label{maxre}
	\max\Re \Spec(X)  =1+ \sqrt{\frac{\log n-5\log\log n-\log (2\pi^4)}{8n}} + \OO\Big(\frac{1}{\sqrt{n\log n}}\Big)
\end{equation}
for the Ginibre ensemble. In this paper we prove~\eqref{maxre} for any i.i.d. matrix ensemble,
in particular we show that the three-term asymptotics is universal in the sense that it is 
independent of the single entry distribution $\chi$.  The
Gumbel fluctuation is also expected to be universal; this is an open problem that we leave to future work.
However, our result~\eqref{maxre} already implies the tightness of $\mathfrak{G}_n$  
in~\eqref{Cmax} even for the i.i.d. case, see Remark~\ref{tight} below.

\begin{figure}
	\begin{tikzpicture}
		\begin{axis}[xmin=-1.3,xmax=1.3,ymin=-1.3,ymax=1.3,width=20em,height=20em,axis line style={white},ticks=none]
			\addplot [only marks,draw=none,mark size=1pt,mark options={draw opacity=0,fill=darkgray}] table [col sep=comma] {500c.csv};
			\draw(axis cs:0,0) [draw=lightgray, very thick] circle[radius=1];
			\node at (axis cs:1,1) {\small\color{gray}\(|z|=1\)};
		\end{axis}
	\end{tikzpicture}\qquad 
	\begin{tikzpicture}
		\begin{axis}[xmin=0.7,xmax=1.4,ymin=-0.35,ymax=0.35,width=20em,height=20em,axis line style={white},ticks=none]
			\addplot [only marks,draw=none,mark size=1pt,mark options={draw opacity=0,fill=darkgray}] table [col sep=comma] {1000c.csv}; 
			\draw [fill=lightgray,fill opacity=.3,draw=lightgray] (axis cs:1.08,.25) -- (axis cs:1.15,.25) -- (axis cs:1.15,-.25) -- (axis cs:1.08,-.25) -- cycle;
			\draw(axis cs:0,0) [draw=lightgray,very thick] circle[radius=1];
			\draw[<->,thick,black!50]
			(axis cs:1.16,.25) -- (axis cs:1.16,-.25) node[midway,above=1pt,rotate=270] {$(\gamma n)^{-1/4}$};
			\draw[<->,thick,black!50]
			(axis cs:1.08,-.26) -- (axis cs:1.15,-.26) node[midway,below=1pt] {$(\gamma n)^{-1/2}$};
			\draw[<->,thick,black!50]
			(axis cs:1.,0) -- (axis cs:1.115,0) node[midway,below=1pt] {$\sqrt{\gamma/n}$};
		\end{axis}
	\end{tikzpicture}
	\caption{The figure shows the eigenvalues of complex Ginibre matrices. The eigenvalues for the left figures have been computed for \(50\) independent Ginibre matrices of size \(50\times 50\), while for the right figure \(100\) independent matrices of size \(100\times 100\) have been sampled. The gray box on the right right hand side indicates the high-probability location of the eigenvalue with the largest real part.}\label{fig circ law}
\end{figure}
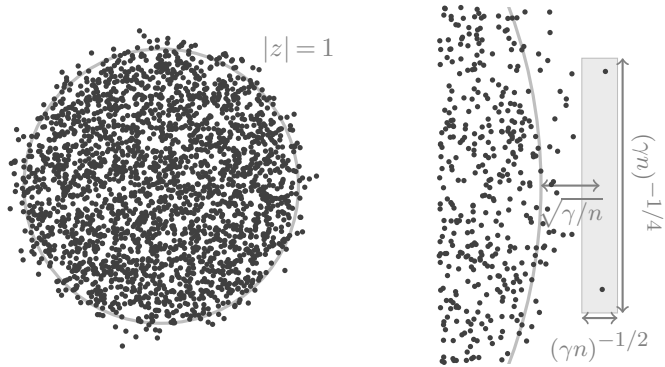

Extreme value statistics for independent random variables are fully described by the 
Fisher-Tippett-Gnedenko theorem, with the Gumbel distribution being one of the 
three main universal laws. The  three-term asymptotics for the scaling factor 
is also common, for example the maximum of $n$ standard Gaussian variables is 
given by 
$$
\sqrt{2 \log n} - \frac{\log\log n + \log (4\pi)}{2 \sqrt{2 \log n}} + \frac{1}{\sqrt{2 \log n}} \mathfrak{G}_n,
$$
where $\mathfrak{G}_n$ is again asymptotically Gumbel. 
While such precise asymptotics in the independent case is a fairly simple exercise by
the tail asymptotics of the individual random variables, it is remarkable that such precision
can be maintained in certain weakly correlated situations\footnote{Another such situation, introduced first in~\cite{MR2288065}, 
	is  the transition between Gumbel
	and Tracy-Widom distributions for GUE with an independent sizeable random deformation.}
such as $\max\Re \Spec(X)$.
The intuition is that effectively only  few rightmost eigenvalues compete for $\max\Re \Spec(X)$
and these eigenvalues are typically far away from each other, well beyond their correlation
length of order $n^{-1/2}$, hence they asymptotically form a Poisson point process and 
are essentially independent.  While this scenario could be directly verified for 
the Ginibre ensemble~\cite{2206.04443} (and even for the more general  Gaussian elliptic ensemble~\cite{MR3192169, MR2594353},
as well as for the  chiral two-matrix model with complex entries~\cite{MR2761338}),  its validity 
for a general i.i.d.\ matrix  is a highly nontrivial fact since explicit formulas for the eigenvalue correlation functions
are lacking.

We remark that starting with the ground-breaking paper of Fyodorov, Hiary and Keating~\cite{22680847}, see also \cite{MR3151088}, 
similar three-term asymptotics have recently been investigated for extreme
values  of  characteristic polynomials of various  random matrix ensembles,  
e.g.~\cite{MR3544809,MR3594368, MR3848227, MR3848391,MR4134939}, as well
as for the Riemann $\zeta$-function, e.g.~\cite{MR4164452,MR3911893, MR3851835, 1906.05783, 2007.00988}.
In a different context, the largest eigenvalues of minors have recently been shown to follow Gumbel distribution as well~\cite{2205.05732}.

\medskip

We now briefly comment on the novelties of our method to prove~\eqref{maxre};
more details will be given in Sections~\ref{sec:strategy} and~\ref{sec:sketch}. 
The standard method to extend any result on local eigenvalue statistics from the Gaussian case to a random matrix with a general 
entry distribution is the \emph{Green function comparison theorem (GFT)} going back to~\cite{MR2981427},
see also the related  \emph{Four moment theorem} of Tao and Vu~\cite{MR2784665}.  Direct application 
of GFT  in the bulk spectral regime for Hermitian 
matrices typically assumes that the third and fourth moments of the entry distribution also (almost) match,
and a more sophisticated dynamical approach relying on the Dyson Brownian motion is necessary to remove
these restrictive matching conditions~\cite{MR3699468, MR4416591, MR3914908}.
At the edge regime, however, two matching moments are sufficient for GFT~\cite{MR2871147}. Alternatively, at the edges
the Green functions can be controlled along the Ornstein-Uhlenbeck (OU) matrix flow 
for a very long time which allows one to compare a general  matrix with a Gaussian one directly.
This flow idea was first used in~\cite{MR3405746, MR3582818}
(see also~\cite{2102.04330, 2108.02728}) 
in the Hermitian context to investigate  the Tracy-Widom edge
universality and  later in~\cite{MR4221653} for the non-Hermitian situation in the edge regime of the circular law.
In the latter case first one translates the non-Hermitian eigenvalue problem to a Hermitian one
via Girko's formula
\begin{equation}\label{girko}
	\sum_{\sigma\in \Spec(X)} f(\sigma)=-\frac{1}{4 \pi} \int_{\C} \Delta f(z) A(z) \dd^2 z, \qquad
	A(z):= \int_0^\infty \Im \Tr G^{z}(\ii \eta) \dd \eta ,
\end{equation}
and then performs the analysis 
for a continuous family of Hermitized resolvents, 
parametrized by an additional spectral parameter $z\in \C$,  given by
\begin{align}\label{initial1}
	G^{z}(w):=(H^{z}-w)^{-1}, \qquad H^{z}:=\begin{pmatrix}
		0  &  X-z  \\
		X^*-\overline{z}   & 0
	\end{pmatrix},\qquad w \in \C\setminus \R.
\end{align}

Customarily one performs a cumulant expansion (see e.g. in~\cite{MR1411619, MR1689027, MR2561434, MR3678478, MR3941370}
for the random matrix context) for the time derivative 
of $F(\Tr G_t(w))$, where $F$ is a smooth test function and $G_t(w)$ is the Green function 
at time $t$. The spectral parameter $w$ is chosen sufficiently close to the real axis
to detect individual eigenvalues, i.e. $\eta:=\Im w$ is smaller than the typical eigenvalue spacing,
e.g. $\eta\ll n^{-2/3}$ in the Hermitian edge regime and $\eta\ll n^{-3/4}$ in the non-Hermitian edge regime
where the spectral density  of $H^z$ develops a cusp singularity at zero.
Typically the first and second order terms in the cumulant expansion are automatically cancelled
by the choice of the OU process, the third and fourth order terms
require careful estimates, while terms with higher order cumulants can be estimated quite crudely.
The estimates are done via the \emph{optimal local laws} that identify the leading
deterministic term of the Green functions plus an error term. In the edge regimes where $\eta:=\Im w$ is
typically much larger than $1/n$, the cumulant expansion can
be \emph{iterated}: in every step one may gain an additional factor $\psi:= 1/(n\eta)\ll 1$ in 
the error terms from the so-called \emph{un-matched indices} 
(Definition~\ref{def:unmatch_shift}),
while the leading deterministic terms can be computed and the first non-vanishing one
gives the final size.  We need to exploit an explicit cancellation of the
leading term after the $z$-integration in~\eqref{girko}, forcing us to expand beyond the usual order.
The iterative cumulant expansion has been systematically developed in~\cite{2102.04330, 2108.02728}
after several previous works using the  iterative 
gain from un-matched indices~\cite{MR2871147, MR3119922} combined with cancellations
of leading deterministic terms in certain situations~\cite{MR3800840, MR4288336, MR4089498}.

The main difference between the current work and all previous applications of sophisticated cumulant expansions
along a GFT proof
is that now we work in a very \emph{atypical} regime which means that all natural a priori estimates 
from local laws are off by a large factor (of size $n^{1/4}$). 
Indeed, due to the curvature of the unit circle near $1$,
the eigenvalues that may typically contribute to $\max\Re \Spec(X)$ are located in an elongated vertical box
of size $n^{-1/2}\times n^{-1/4}$ (modulo logarithmic factors) with center around $1+\sqrt{\gamma_n/4n}$
and this box contains typically finitely many (independently of $n$) eigenvalues, see Fig.~\ref{fig circ law} -- this 
was proven for the Gaussian case in~\cite{2206.04443}. Therefore to obtain a lower
and upper bound on   $\max\Re \Spec(X)$ we will need to use~\eqref{girko} 
for a smooth test functions $f$ supported on such box and we need to control~\eqref{girko} in expectation and  variance sense.

If $f$ in~\eqref{girko} is a smooth  function supported in this box then $\int_\C |\Delta f(z)|\dd^2z\sim n^{1/4}$
is unusually large due to  \emph{strong anisotropy} of the box.
The typical size of the fluctuation of $A(z)$ by local law is $\int_0^\infty \eta^{-1}\dd \eta \sim O(1)$
(ignoring logarithmic singularities). Thus the naive size of the fluctuation in the rhs. of Girko's formula in (\ref{girko}) is of order $n^{1/4}$ for a quantity
that is only of order one by its lhs. This overestimate has a drastic effect on the usual cumulant expansions.
Higher order terms in the cumulant expansion of $F( \int_\C \Delta f(z) A(z)\dd^2z)$
will involve higher powers of the problematic quantity $\int (\Delta f)  A$ whose a priori size we do not
control effectively. For smooth and bounded test functions $F$
the standard iterative cumulant expansion, similar to~\cite{2102.04330, 2108.02728}, is  still
effective but only if $n^{1/4}\psi= n^{1/4}/(n\eta)\ll 1$, i.e. in the regime where $\eta\gg n^{-3/4}$.
We circumvent this difficulty by considering only the expectation and the variance of $\int (\Delta f)  A$
instead of a general test function $F$ which has the advantage that the Taylor expansion of $F$ stops
at first or second order. This restricted choice of $F$ is the main reason why our current result is able to control only the
size of $\max\Re \Spec(X)$ in~\eqref{maxre} but not yet its  Gumbel fluctuation. 

The complementary  $\eta\lesssim n^{-3/4}$  regime is not accessible by robust expansions.
In fact, the regime $\eta\ll n^{-3/4}$
is dominated by the smallest (in modulus) eigenvalue $\lambda^z$
of $H^z$ (equivalently, the lowest singular value of $X-z$), especially by its lower tail behaviour. Two independent special effects
need to be exploited simultaneously. First, there is a level repulsion between $\lambda^z$ and $-\lambda^z$ (since the spectrum of $H^z$
is symmetric to the origin, hence $-\lambda^z$ is also an eigenvalue), which effectively suppresses the event that $|\lambda^z|$
is much smaller than its natural scale $n^{-3/4}$. Second, the density of non-Hermitian eigenvalues (of $X$) are suppressed by a factor $e^{-n(|z|-1)^2/2}$ in the regime where $|z|\ge 1$, expressing the
very strong concentration of the spectral radius near $1$.  Heuristically, this gives an additional small factor 
of order  $e^{-n(|z|-1)^2/2}$ for the lower tail of $\lambda^z$ as well. However, note that 
having a non-Hermitian eigenvalue extremely close to  $z$ and $\lambda^z$ being
unusually small is not an effectively controllable relation, even though $z\in \Spec(X)$ 
is equivalent to $\lambda^z=0$. Both effects are extremely delicate and 
cannot be obtained directly  for a general i.i.d. ensemble, but they can be extracted from 
the corresponding Ginibre ensemble via explicit calculations. We therefore first establish
a very accurate lower tail estimate on $\lambda^z$ in the Ginibre case (see Proposition~\ref{prop:tail} below), then
via a separate GFT argument we transfer its consequence to the i.i.d. ensemble by obtaining an improved bound on 
$\E  \Im \Tr G^{z}(\ii \eta)$ (see Proposition~\ref{prop1} below).  Up to an intermediate cutoff scale $\eta\ll n^{-7/8}$ 
this bound is sufficient to overcome the $n^{1/4}$ loss from $\int |\Delta f|$, ensuring that this small-$\eta$ regime is 
negligible in the expectation sense and proving that only the $\eta\gtrsim n^{-7/8}$ regime matters in~\eqref{girko}.

Finally, we revisit the iterative cumulant expansion for the large-$\eta$ regime and
use that we are interested only in the expectation and variance instead of a general $F$.
This means that the  factor $\int |\Delta f|\sim n^{1/4}$ occurs at most twice which 
can still be compensated  by the improved estimate on $\E  \Im \Tr G^{z}(\ii \eta)$ in the cumulant expansions.
For the comparison argument
we also need a variance bound on the large-$\eta$ regime for the Gaussian case, which is not available
directly, but which we deduce indirectly from the variance of the lhs. of~\eqref{girko} (that is available
via the Ginibre kernels~\cite{2206.04443}) and from the vanishing variance of the small-$\eta$ regime.
The optimal variance bound  for the entire $\eta\ll n^{-3/4}$ regime 
would require the precise correlation between $\lambda^z$ and $\lambda^{z'}$ for two different
spectral parameters $z, z'$ -- an information that is not available even in the Gaussian case.
Nevertheless, the suboptimal estimate, ignoring the decorrelation for large $z-z'$, is still  sufficient for 
our smaller regime $\eta\ll n^{-7/8}$.  This is another independent reason for choosing the 
threshold $n^{-7/8}$ for splitting the $\eta$-integral in~\eqref{girko}.

Note that we  use the $n^{-7/8}$ threshold to 
distinguish between the negligible small-$\eta$ regime and the large-$\eta$ regime requiring separate
GFT comparisons, although the natural cutoff threshold should have been at $n^{-3/4}$ (the threshold $n^{-7/8}$ is only a technical choice).

In summary, our proof is much more  involved   than a typical  direct iterative GFT argument 
and requires to choose an "unnatural" threshold $n^{-7/8}$ due to 
two main reasons: (i) we do not have almost matching a priori bounds due the anisotropy of the regime we consider, 
and (ii) the necessary direct information on the correlation between $\lambda^z$ and $\lambda^{z'}$ 
is lacking even in the Gaussian case.

\subsection*{Notations and conventions}

We introduce some notations we use throughout the paper. For integers \(k,l\in\N \) with $k\leq l$ we use the notation 
\(\llbracket k, l \rrbracket:= \{k,k+1,\dots, l\}\).  For positive quantities \(f,g\) we write \(f\lesssim g\) and \(f\sim g\) if \(f \le C g\) or \(c g\le f\le Cg\), 
respectively, for some constants \(c,C>0\) which depend only on the constants appearing in~\eqref{eq:hmb}.  For $n$-dependent positive
sequences $f=f_n, g=g_n$ we also introduce $f\ll g$ indicating that $f_n=o(g_n) $.
We denote vectors by bold-faced lower case Roman letters \(\mathbf{x}, \mathbf{y}\in\C ^k\), for some \(k\in\N\). 
Vector and matrix norms, \(\lVert{\mathbf x}\rVert\) and \(\lVert A\rVert\), indicate the usual Euclidean norm 
and the corresponding induced matrix norm. For any \(2n\times 2n\) matrix \(A\) we use 
the notation \(\langle A\rangle:= (2n)^{-1}\mathrm{Tr}  A\) to denote the normalized trace of \(A\). 
Moreover, for vectors \({\mathbf x}, {\mathbf y} \in\C ^n\) and matrices \(A,B\in \C ^{2n\times 2n}\) we define
\[ 
\langle{\mathbf x},{\mathbf y}\rangle:= \sum \overline{x}_i y_i, \qquad \langle A,B\rangle:= \langle A^*B\rangle. 
\]
Moreover, we use $\Delta = 4\partial_z\partial_{\bar z}$ to denote the usual Laplacian
and \(\mathrm{d}^2 z \) denotes the Lebesgue measure on $\C$. For any function $h:\C \rightarrow \C$, we define the $L^{p}$-norm by 
$\|h\|_p^p:=\int_\C |h(z)|^p \dd^2 z.$

We will use the concept of ``with very high probability'' meaning that for any fixed \(D>0\) the probability of the event is bigger
than \(1-n^{-D}\) if \(n\ge n_0(D)\). Moreover, we use the convention that \(\xi>0\) denotes an arbitrary small 
constant which is independent of \(n\). Finally, we introduce the notion of 
\emph{stochastic domination} (see e.g.~\cite{MR3068390}): given two families of non-negative random variables
\[
X=\left(X^{(n)}(u) : n\in\N, u\in U^{(n)} \right)\quad\text{and}\quad Y=\left( Y^{(n)}(u) : n\in\N, u\in U^{(n)} \right)
\] 
indexed by \(n\) (and possibly some parameter \(u\)  in some parameter space $U^{(n)}$), 
we say that \(X\) is stochastically dominated by \(Y\), if for all \(\xi, D>0\) we have \begin{equation}\label{stochdom}
	\sup_{u\in U^{(n)}} \mathbb{P}\left[X^{(n)}(u)>n^\xi  Y^{(n)}(u)\right]\leq n^{-D}
\end{equation}
for large enough \(n\geq n_0(\xi,D)\). In this case we use the notation \(X\prec Y\) or \(X= \OO_\prec(Y)\).
We also use the convention that \(\xi>0\) denotes an arbitrary small constant which is independent of \(n\). We often use the notation 
$\prec$ also for deterministic quantities, then the probability in~\eqref{stochdom} is
zero for any $\xi>0$ and sufficiently large $n$. 

\section{Main result and the proof ingredients}\label{sec:str}

In this section we first formulate our main result precisely for the complex symmetry class.
Then in Section~\ref{sec:strategy}
we collect some key ingredients of the proof from the literature: the local circular law,
the strong concentration of the spectral radius, Girko's formula, the local law for the Hermitized matrix 
$H^z$, and most importantly we present a new lower tail bound on the smallest (in modulus)
eigenvalue of $H^z$ (Proposition~\ref{prop:tail} with its proof presented in the appendix).
In the brief Section~\ref{sec:sketch} we informally explain the main strategy
that will be formalized in Section~\ref{sec:proof_main_theorem}.
Finally, in Section~\ref{sec:real} we comment on the extension of our argument to the real symmetry class.

\subsection{Statement of the main result}
We consider $n\times n$ matrices $X$ with independent identically distributed (i.i.d.) entries $x_{ab}\stackrel{\mathrm{d}}{=}n^{-1/2}\chi$. On the $n$-independent random variable $\chi$ we make the following assumption:

\begin{assumption}
	\label{ass:mainass}
	We assume that $\E \chi=0$, $\E |\chi|^2=1$; additionally in the complex case we also assume that $\E \chi^2=0$. Furthermore, for any $p\in\N$ we assume that there exists constants $C_p>0$ such that
	\begin{equation}
		\label{eq:hmb}
		\E\big|\chi^p\big|\le C_p.
	\end{equation}
Moreover, we assume that there exists $\alpha,\beta>0$ such that the probability density of $\chi$, denoted by $g$, satisfies 
	\begin{align}\label{assumption_b}
		g \in L^{1+\alpha}(\mathbb{F}), \quad \|g\|_{1+\alpha} \leq n^{\beta}, \qquad \mathbb{F}=\R~\mathrm{or}~\C.
\end{align}
\end{assumption}

\begin{remark}\label{TV_tech} 
	The condition on the density~(\ref{assumption_b}) is used only 
	  to control the  unlikely event 
 that there is a tiny singular value of $X-z$ in a very simple way; see~(\ref{density_bound}) below. 
	We make this assumption only  to simplify the presentation of the proof. 
	It can easily be removed with a separate argument from
	in~\cite[Section 6.1]{MR3306005} (see also a slightly streamlined version in \cite[Section 2.2]{1510.02987}) 
	as follows.
	To deal with a  random matrix $X$ failing to satisfy~(\ref{assumption_b}), one may add a tiny independent Gaussian 
	component  $n^{-\gamma} X_{\rm Gauss}$ to $X$ for some large fixed $\gamma>0$
	 to achieve a probability density  for the entries that satisfies~(\ref{assumption_b}),
	hence our main results hold for $X+n^{-\gamma} X_{\rm Gauss}$.
	This tiny component $n^{-\gamma} X_{\rm Gauss}$ can then be removed by using the proof of  \cite[Theorem 23]{MR3306005}
	(or its refinement \cite[Lemma 4]{1510.02987})
	that combines a sampling idea with the standard moment matching technique\footnote{This theorem shows that if the first four moments
	of the single entry distributions of two ensembles $X$ and $X'$ coincide, then their microscopic local statistics are close
	with an effective error. With sufficiently high moment matching a straightforward modification 
	of the proof of \cite[Theorem 23]{MR3306005} yields 
	much finer error estimates up to any polynomial order in $1/n$ which can be used to offset the 
	anisotropic loss of our test function $f$.
	 The same conclusion also holds if the moments
	do not exactly match, but they differ only
	 up to an order $n^{-\gamma}$.}.
	 We will not present the details here
	since they are fairly standard and they are independent of our main arguments. 
	
	\nc
\end{remark}

Let $\{\sigma_i\}_{i\in \llbracket 1, n \rrbracket}$ be the eigenvalues of $X$, then our main result is the following:
\begin{theorem}\label{main}
	Let $X$ be an $n\times n$ matrix satisfying Assumption~\ref{ass:mainass} in the complex case, and define
	\begin{align}
		\label{eq:gamma}
		\gamma_n:= \frac{\log n-5 \log \log n - \log (2\pi^4)}{2}.  
	\end{align}
	Then
	\begin{align}\label{real_part}
		\lim_{n\to \infty}\P \left(\left|\max_{i \in \llbracket 1, n \rrbracket} \Re \sigma_i-1-\sqrt{\frac{\gamma_n }{4n}} \right| \geq \frac{C_n}{\sqrt{4 n \gamma_n}}\right)=0,
	\end{align}	
	for any sequence\footnote{ Following our proof we may obtain an effective
		control on  the probability in~\eqref{real_part} of order $O(C_n^{-\tau} +  n^{-\tau})$ 
		for some fixed $\tau>0$. } $C_n \rightarrow \infty$.
\end{theorem}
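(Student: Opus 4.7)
The plan is to reduce the statement to its Ginibre counterpart (which the authors established in their companion paper \cite{2206.04443}) via a comparison argument at the level of linear eigenvalue statistics, using Girko's formula~\eqref{girko}. Concretely, for any target interval $[a_n,b_n]$ with $a_n, b_n$ of the form $1+\sqrt{\gamma_n/4n} \pm C_n/\sqrt{4n\gamma_n}$, the event $\max_i \Re\sigma_i \in [a_n,b_n]$ can be encoded, up to negligible error, by a smooth cutoff $f=f_n\colon\C\to[0,1]$ which is essentially the indicator of the elongated box $\{z: \Re z \in [a_n,b_n],\ |\Im z|\le n^{-1/4}\log^K n\}$ (the vertical extent being chosen larger than the typical correlation length $n^{-1/2}$ but still much smaller than $1$). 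Two observations then frame everything: first, by the very strong concentration of the spectral radius $\rho(X)\le 1 + O_\prec(n^{-1/2})$, only eigenvalues inside this box can compete for $\max\Re\sigma_i$; second, by~\eqref{girko},
\begin{equation*}
\sum_i f(\sigma_i)\;=\;-\frac{1}{4\pi}\int_\C \Delta f(z)\int_0^\infty \Im\Tr G^z(\ii\eta)\,\dd\eta\,\dd^2 z.
\end{equation*}
For the Ginibre ensemble, the left-hand side is known very precisely from~\cite{2206.04443}, so the task reduces to showing that the joint distribution of these linear statistics for an i.i.d.\ $X$ agrees with that for a Ginibre $X^{\mathrm{G}}$ up to additive error $o(1)$.

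The comparison is performed along the Ornstein--Uhlenbeck interpolation $X_t=\ee^{-t/2}X+(1-\ee^{-t})^{1/2}X^{\mathrm{G}}$. Since only the first and second moments of the linear statistic are needed (not general smooth test functions $F$), the Taylor expansion of $F(\int\Delta f\cdot A)$ stops at second order, and I must control only $\E$ and $\V$ of $\int \Delta f\cdot A$ along the flow. I would split the inner $\eta$-integral at the threshold $\eta_0:=n^{-7/8}$ announced in the introduction.
\begin{itemize}
\item For $\eta\le \eta_0$, the quantity $\Im\Tr G^z(\ii\eta)$ is dominated by the smallest singular value $\lambda^z$ of $X-z$. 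Using Proposition~\ref{prop:tail} (the refined lower tail bound on $\lambda^z$ in the Ginibre case, transferred to the i.i.d.\ case via the auxiliary GFT estimate in Proposition~\ref{prop1}), the contribution of this window to $\E\int\Delta f\cdot A$ is bounded by $n^{1/4}\cdot \eta_0\cdot$(tail factor), which is $o(1)$ thanks to the exponential $\ee^{-n(|z|-1)^2/2}$ suppression on the support of $f$; the variance contribution is handled analogously, borrowing the Ginibre variance of the full statistic and subtracting the large-$\eta$ piece.
\item For $\eta\ge \eta_0$, I would run the iterative cumulant expansion of~\cite{2102.04330, 2108.02728} for $\dd/\dd t$ applied to $\int \Delta f\cdot A_t$ and to its square. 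Each iteration gains a factor $\psi=1/(n\eta)\lesssim n^{-1/8}$ from unmatched indices; the leading deterministic terms vanish after integration against $\Delta f$ (this is where the structure of Girko's formula and the support of $f$ must be used), and the next-order terms survive but are small enough that the accumulated $\int|\Delta f|\sim n^{1/4}$ loss is beaten. Because only $\E$ and $\V$ are tracked, the factor $n^{1/4}$ enters at most quadratically, which is compensated by the improved a priori bound on $\E\Im\Tr G^z(\ii\eta)$ coming from the companion Gaussian calculation.
\end{itemize}

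With both $\eta$-windows controlled, I conclude that
\begin{equation*}
\E\sum_i f(\sigma_i)\;=\;\E\sum_i f(\sigma_i^{\mathrm G})+o(1),\qquad \V\sum_i f(\sigma_i)\;=\;\V\sum_i f(\sigma_i^{\mathrm G})+o(1),
\end{equation*}
and from~\cite{2206.04443} the right-hand sides are, respectively, bounded and of order $o(1)$ once $C_n\to\infty$ and $f$ is chosen to be supported to the right of $1+\sqrt{\gamma_n/(4n)}+C_n/\sqrt{4n\gamma_n}$ (for the upper bound) or to be one on the corresponding shifted box (for the lower bound). Chebyshev's inequality then yields~\eqref{real_part}.

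The main obstacle I anticipate is item (i) flagged by the authors: the anisotropic box forces $\int|\Delta f|\sim n^{1/4}$, which is far off from the natural a priori scales provided by the local law. Making the cumulant expansion close requires simultaneously (a) exploiting a cancellation in the leading deterministic term after the $z$-integration (so that only the sub-sub-leading term is multiplied by the dangerous $n^{1/4}$), and (b) inserting the sharpened bound on $\E\Im\Tr G^z(\ii\eta)$, itself ultimately descending from the Ginibre lower tail for $\lambda^z$ and its level-repulsion with $-\lambda^z$. Orchestrating these two independent mechanisms, and choosing the cutoff $\eta_0=n^{-7/8}$ so that the small-$\eta$ tail bound and the large-$\eta$ expansion both succeed despite the absence of a direct two-point estimate on $(\lambda^z,\lambda^{z'})$, will be the technical heart of the argument.
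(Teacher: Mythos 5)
Your proposal follows essentially the same route as the paper: regularize the indicator of the anisotropic box, apply Girko's formula, split the $\eta$-integral at $n^{-7/8}$, control the small-$\eta$ window via the Ginibre lower-tail bound on $\lambda_1^z$ transferred by a GFT, run an iterative (unmatched-index) cumulant expansion for the large-$\eta$ window in expectation and variance only, and recover the large-$\eta$ Ginibre variance indirectly from the full linear statistic. The outline is correct as far as it goes (only minor slips, e.g.\ the exponential factor $\ee^{-n\delta^2/2}$ is needed for the second moment of $\Im\langle G^z\rangle$ rather than the first), but it defers precisely the expansion estimates that constitute the paper's actual proof.
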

We  remark that Theorem~\ref{main} 
with the same proof holds if instead of the
eigenvalue with the largest real part we consider the largest eigenvalue in any given direction,
i.e. \eqref{real_part} holds for $\max_i \Re (e^{\ii \theta} \sigma_i)$ with any fixed $\theta\in \R$, but for simplicity 
we consider the $\theta=0$  case.
We also note that the matrix elements of $X$ do not 
necessarily have to be identically distributed. Our proof works 
with minor modifications 
as long as all the entries $\chi_{ab} = \sqrt{n} x_{ab}$ satisfy
Assumption 2.1 uniformly for any $a,b$\nc, but for simplicity we consider the i.i.d. case.

We stated our main Theorem~\ref{main} only for the complex case. The same result holds for the real case
but  we give a complete proof only for the complex case; we explain 
the reason  in Section~\ref{sec:real}.

Our precise estimate on $\max_{i } \Re \sigma_i$ has the following immediate corollary
on the solution to~\eqref{de} with an i.i.d. matrix $X$.
\begin{corollary}\label{cor:ode}
	Let  ${\mathbf u}(t)$ be the solution to~\eqref{de} with deterministic initial condition ${\mathbf u}(0)$
	and with coupling constant $g=g_n$. Then for any sequence $C_n\to \infty$,   the following 
	statements hold with probability tending to one as $n\to \infty$:
	\begin{itemize}
		\item[(i)]  If $g \le 1- \sqrt{\frac{\gamma_n }{4n}}-\frac{C_n}{\sqrt{4 n \gamma_n}}$, then
		$$
		\limsup_{t\to\infty} \frac{\max_i |u_i(t)|}{ \max_i |u_i(0)|} = 0.
		$$
		\item[(ii)]  If $g \ge 1- \sqrt{\frac{\gamma_n }{4n}}+\frac{C_n}{\sqrt{4 n \gamma_n}}$, then
		$$
		\limsup_{t\to\infty}  \frac{\max_i |u_i(t)|}{ \max_i |u_i(0)|} = \infty.
		$$
	\end{itemize}
\end{corollary}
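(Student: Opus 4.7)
The plan is to reduce both claims to a quantitative one-sided estimate on the spectral abscissa $\alpha := -1+g\max_i\Re\sigma_i$ of the coefficient matrix $-I+gX$. Indeed, the explicit solution of~\eqref{de} is $\mathbf{u}(t)=\exp\bigl((-I+gX)t\bigr)\mathbf{u}(0)$, and its long-time behaviour is governed by the sign and size of $\alpha$; throughout we assume $\mathbf{u}(0)\ne 0$, which is implicit in the formulation.

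First I would convert Theorem~\ref{main} into a quantitative two-sided bound on $\alpha$ with a controlled margin. Pick an auxiliary sequence $\widetilde{C}_n\to\infty$ with $\widetilde{C}_n\ll C_n$, e.g.\ $\widetilde{C}_n:=\max(\sqrt{C_n},\log\log n)$, and apply Theorem~\ref{main} with $\widetilde{C}_n$ in place of $C_n$: on an event $\Omega_n$ of probability tending to $1$,
$$
\max_i\Re\sigma_i \;=\; 1+\sqrt{\gamma_n/(4n)}+\OO\bigl(\widetilde{C}_n/\sqrt{4n\gamma_n}\bigr).
$$
Multiplying by the hypothesis on $g$, using the elementary identity $\sqrt{\gamma_n/(4n)}\cdot(4n\gamma_n)^{-1/2}=(4n)^{-1}\ll(4n\gamma_n)^{-1/2}$, and expanding, one obtains on $\Omega_n$: in case (i), $\alpha\le -(1-o(1))(C_n-\widetilde{C}_n)/\sqrt{4n\gamma_n}$; in case (ii), $\alpha\ge(1-o(1))(C_n-\widetilde{C}_n)/\sqrt{4n\gamma_n}$. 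In either case the sign of $\alpha$ is definite and its magnitude beats the natural error scale.

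For case (i), $X$ is diagonalizable almost surely (the discriminant of the characteristic polynomial is a nontrivial polynomial in the entries; if diagonalizability should fail, only harmless polynomial-in-$t$ factors from Jordan blocks appear and are absorbed). Writing $X=V\Lambda V^{-1}$ yields $\lVert\exp((-I+gX)t)\rVert_{\mathrm{op}}\le\kappa(V)\,e^{\alpha t}$ with $\kappa(V):=\lVert V\rVert\lVert V^{-1}\rVert<\infty$ almost surely, and since $\alpha<0$ on $\Omega_n$, the norm comparison $\lVert\cdot\rVert_\infty\le\lVert\cdot\rVert_2\le\sqrt{n}\lVert\cdot\rVert_\infty$ forces $\max_i|u_i(t)|\to 0$ as $t\to\infty$.

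For case (ii), decompose $\mathbf{u}(0)=\sum_i\beta_i\mathbf{v}_i$ in the right eigenbasis, where $\beta_i=\langle\mathbf{w}_i,\mathbf{u}(0)\rangle$ and $\mathbf{w}_i$ is the corresponding left eigenvector. Letting $j^*$ index the rightmost eigenvalue and using $\lVert V\mathbf{x}\rVert_2\ge\lVert V^{-1}\rVert^{-1}\lVert\mathbf{x}\rVert_2$,
$$
\lVert\mathbf{u}(t)\rVert_2\;\ge\;\lVert V^{-1}\rVert^{-1}\,|\beta_{j^*}|\,e^{\alpha t}(1+o(1)),
$$
so $\lVert\mathbf{u}(t)\rVert_\infty\ge n^{-1/2}\lVert\mathbf{u}(t)\rVert_2$ forces $\max_i|u_i(t)|\to\infty$ exponentially, \emph{provided} $\beta_{j^*}\ne 0$. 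The main obstacle is exactly this overlap nondegeneracy: for deterministic $\mathbf{u}(0)\ne 0$ one must exclude the degenerate event that the random left eigenvector $\mathbf{w}_{j^*}$ is orthogonal to $\mathbf{u}(0)$. This follows from a standard absolute continuity / no-coincidence argument for the joint eigenvalue--eigenvector distribution of non-Hermitian i.i.d.\ ensembles (available once eigenvalue simplicity is established, which itself holds a.s.\ on $\Omega_n$), yielding $\beta_{j^*}\ne 0$ almost surely and concluding the proof.
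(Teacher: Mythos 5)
The paper offers no proof of this corollary (it is stated as an ``immediate'' consequence of Theorem~\ref{main}), so your reduction to the sign of the spectral abscissa $\alpha=-1+g\max_i\Re\sigma_i$, with the margin obtained by applying Theorem~\ref{main} at an auxiliary scale $\widetilde C_n\ll C_n$, is exactly the intended route. Your treatment of case (i) is correct and robust: once $\alpha<0$ on a high-probability event, $\|e^{(-I+gX)t}\|\to 0$ as $t\to\infty$ for the fixed (random but finite) matrix, Jordan blocks contributing only polynomial factors, and the norm equivalences are harmless since $n$ is fixed in the $t\to\infty$ limit.

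Case (ii), however, contains a genuine gap. You correctly identify that for a \emph{fixed deterministic} $\mathbf u(0)$ one must rule out $\beta_{j^*}=\langle\mathbf w_{j^*},\mathbf u(0)\rangle=0$, but the justification you offer --- ``a standard absolute continuity / no-coincidence argument for the joint eigenvalue--eigenvector distribution'' --- is not available under Assumption~\ref{ass:mainass}: the entry distribution $\chi$ may be purely atomic (e.g.\ supported on finitely many points), in which case the law of $X$ is not absolutely continuous and the vanishing of an analytic function of the entries is not automatically a null event. Even granting continuity of $\chi$, you would still need to exhibit the set $\{X:\mathbf w_{j^*}\perp\mathbf u(0)\}$ as a Lebesgue-null set, which requires an argument (e.g.\ that $\mathbf w_{j^*}\perp\mathbf u(0)$ is equivalent to $\mathbf u(0)\in\mathrm{Range}(X-\sigma_{j^*})$, a codimension-one analytic condition), and in the general i.i.d.\ case a quantitative high-probability lower bound on $|\beta_{j^*}|$ is a nontrivial eigenvector-overlap statement not supplied by anything in this paper. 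Note that the introduction frames the corollary as the \emph{worst-case} scenario, ``the supremum over all initial data $\mathbf u(0)$''; under that reading part (ii) follows without any eigenvector input from
\begin{equation*}
\sup_{\mathbf u(0)\neq 0}\frac{\max_i|u_i(t)|}{\max_i|u_i(0)|}=\big\|e^{(-I+gX)t}\big\|_{\infty\to\infty}\;\ge\;\rho\big(e^{(-I+gX)t}\big)=e^{\alpha t}\longrightarrow\infty ,
\end{equation*}
and this is the cleanest way to close the argument. As written, your proof of (ii) for a single fixed $\mathbf u(0)$ is incomplete.
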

This corollary shows  that to prevent the 
decay or blow-up  of the solution for arbitrary long time, i.e. to remain in the so-called \emph{stable regime} in 
many applications, it is \emph{necessary} to fine tune the coupling constant very accurately.
Our main theorem can also be used for \emph{sufficient} conditions for stability up to a certain time
scale of order $\sqrt{4n/\gamma_n}$  but we refrain from formalizing such statement.
We note that the maximum norm on ${\mathbf u}$ and the deterministic initial condition
indicate that we considered the worst-case scenario. As we mentioned
in the introduction, the problem with random initial data and measuring stability in  $\ell^2$-sense
has been investigated earlier and gives a somewhat different optimal tuning for $g$. 

\begin{remark}\label{tight}
	Theorem~\ref{main} implies that the sequence of random variables
	$$
	\mathfrak{G}_n:= \sqrt{ 4 n \gamma_n}\left[\max_{i \in \llbracket 1, n \rrbracket} \Re \sigma_i-1-\sqrt{\frac{\gamma_n }{4n}} \right]
	$$
	is tight, hence it has   subsequential  limits by Prokhorov's theorem. The limit
	is conjectured to be unique and to be  the standard Gumbel distribution with distribution function $F(x)=\exp( - e^{-x})$
	in the complex case and $F(x)=\exp(- \frac{1}{2} e^{-x})$ in the  real case. For the Ginibre ensembles this conjecture was recently  proven in~\cite{2206.04443}.
\end{remark}

\subsection{Proof ingredients}\label{sec:strategy}

First we recall two earlier  results that locate $\max_{i \in\llbracket 1, n \rrbracket} \Re \sigma_i$ on a cruder scale than our eventual
target precision.
The local circular law~\cite{MR3230004}  implies that for any fixed $\tau>0$
there is an eigenvalue in the rectangle
\begin{align}\label{omega_0}
	\Omega_0:=\left[1-\frac{n^{\tau}}{\sqrt{n}},~1+\frac{n^{\tau}}{\sqrt{n}}\right]\times \left[-\frac{n^{\tau/2}}{n^{1/4}},~\frac{n^{\tau/2}}{n^{1/4}}\right],
\end{align}
with very high probability, using  the curvature of the boundary. In particular, this shows that typically 
$\max_{\llbracket 1, n \rrbracket} \Re \sigma_i\ge 1- n^{-1/2+\tau}$. Furthermore, 
by~\cite[Theorem 2.1]{MR3770875} we have 
a strong concentration estimate for the spectral radius $\rho(X)=\max_{i \in \llbracket 1, n \rrbracket} |\sigma_i|$:
\begin{equation}
	\label{eq:specrad}
	\left|\rho(X)-1\right|\le\frac{n^{\tau}}{\sqrt{n}}
\end{equation}
for any $\tau>0$,
with very high probability. 
In particular, fixing a small $\tau>0$, \eqref{omega_0} and \eqref{eq:specrad} imply
that the rightmost eigenvalue is located in $\Omega_0$, see Fig.~\ref{fig Omega}.

Next we will show that with vanishing probability, 
\begin{align}\label{vanishing_prob}
	\left|\max_{i \in \llbracket 1, n \rrbracket} \Re \sigma_i-1-\sqrt{\frac{\gamma_n }{4n}} \right| \geq \frac{C_n}{\sqrt{4 n \gamma_n}},
\end{align}
with $\gamma_n$ from (\ref{eq:gamma}) and $C_n$ being any sequence $C_n \rightarrow \infty$. To see this, we define the following two sub-rectangles of $\Omega_0$ with the same height:
\begin{align}\label{omega_1}
	\Omega_1:=\left[1+\sqrt{\frac{\gamma_n}{4n}}-\frac{C_n}{\sqrt{4 n \gamma_n}},~1+\sqrt{\frac{\gamma_n}{4n}}+\frac{C_n}{\sqrt{4 n \gamma_n}}\right]\times \left[-\frac{n^{\tau/2}}{n^{1/4}},~\frac{n^{\tau/2}}{n^{1/4}}\right],
\end{align}
\begin{align}\label{omega_2}
	\Omega_2:=\left[1+\sqrt{\frac{\gamma_n}{4n}}+\frac{C_n}{\sqrt{4 n \gamma_n}},~1+\frac{n^{\tau}}{\sqrt{n}}\right]\times \left[-\frac{n^{\tau/2}}{n^{1/4}},~\frac{n^{\tau/2}}{n^{1/4}}\right],
\end{align}
see Fig.~\ref{fig Omega}. From now on without loss of generality we may assume $1\ll C_n\ll (\log n)^{1/2}$.

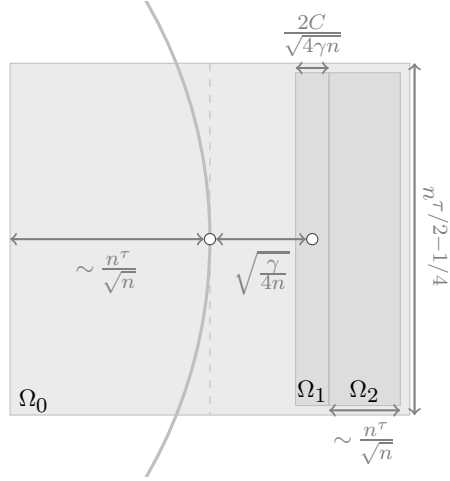
\begin{figure}
	\begin{tikzpicture}
		\begin{axis}[xmin=0.5,xmax=1.5,ymin=-0.5,ymax=0.5,width=25em,height=25em,axis line style={white},ticks=none]
			\draw [fill=lightgray,fill opacity=.3,draw=lightgray] (axis cs:.58,.37) -- (axis cs:1.42,.37) -- (axis cs:1.42,-.37) -- (axis cs:.58,-.37) -- cycle;
			\draw [fill=lightgray,fill opacity=.3,draw=lightgray] (axis cs:1.18,.35) -- (axis cs:1.25,.35) -- (axis cs:1.25,-.35) -- (axis cs:1.18,-.35) -- cycle;
			\draw [fill=lightgray,fill opacity=.3,draw=lightgray] (axis cs:1.25,.35) -- (axis cs:1.4,.35) -- (axis cs:1.4,-.35) -- (axis cs:1.25,-.35) -- cycle;
			\draw [draw=lightgray,dashed] (axis cs:1,.37) -- (axis cs:1,-.37);
			\draw(axis cs:0,0) [draw=lightgray,very thick] circle[radius=1];
			\node[draw=none] at (axis cs:.63,-.34) (a) {$\Omega_0$};
			\node[draw=none] at (axis cs:1.215,-.32) (a) {$\Omega_1$};
			\node[draw=none] at (axis cs:1.325,-.32) (a) {$\Omega_2$};
			\draw[<->,thick,black!50] (axis cs:1.43,.37) -- (axis cs:1.43,-.37) node[midway,above=1pt,rotate=270] {$n^{\tau/2-1/4}$};
			\draw[<->,thick,black!50](axis cs:1.18,.36) -- (axis cs:1.25,.36) node[midway,above=1pt] {$\frac{2C}{\sqrt{4\gamma n}}$};
			\draw[<->,thick,black!50](axis cs:1.25,-.36) -- (axis cs:1.4,-.36) node[midway,below=1pt] {$\sim \frac{n^{\tau}}{\sqrt{n}}$};
			\node [circle,draw=darkgray,fill=white,inner sep=1.5pt] (one) at (axis cs:1,0) {};
			\node [circle,draw=darkgray,fill=white,inner sep=1.5pt] (ctr) at (axis cs:1.215,0) {};
			\draw[<->,thick,black!50](axis cs:.58,0) -- (one) node[midway,below=1pt] {$\sim \frac{n^{\tau}}{\sqrt{n}}$};
			\draw[<->,thick,black!50](one) -- (ctr) node[midway,below=1pt] {$\sqrt{\frac{\gamma}{4n}}$};
		\end{axis}
	\end{tikzpicture}
	\caption{The figure shows the domains \(\Omega_0,\Omega_1,\Omega_2\). The set \(\Omega_1\) is chosen such that the number of eigenvalues therein are approximately Poisson-distributed with parameter \(2\sinh(C_n)\sim e^{C_n}\), while in \(\Omega_2\) no eigenvalues are expected with high probability.}\label{fig Omega}
\end{figure}

To prove the upper bound in (\ref{vanishing_prob}), 
it suffices to show that 
\begin{align}\label{omega2_exp}
	\P(\#\{\sigma_i \in \Omega_2\} \geq 1)\leq	\E [\#\{\sigma_i \in \Omega_2\}]=o(1),
\end{align}
here by $o(1)$ we denote a quantity that goes to zero as $n\to\infty$. To prove the matching lower bound in (\ref{vanishing_prob}), 
$\P\left(\#\{\sigma_i \in \Omega_1\}=0\right)=o(1)$, we need not only the expectation bound
\begin{align}\label{omega1_exp}
	\E [\#\{\sigma_i \in \Omega_1\}] \geq c_0, \qquad 
\end{align}
for some $n$-independent constant $c_0>0$, but also the concentration bound
\begin{align}\label{concentrate}
	\E \big|\#\{\sigma_i \in \Omega_1\}-\E[\#\{\sigma_i \in \Omega_1\}]\big|=o(1)\E[\#\{\sigma_i \in \Omega_1\}].
\end{align}
More precisely, using the Markov inequality in combination with (\ref{omega1_exp}) and (\ref{concentrate}), we get
\begin{align}\label{markov_argu}
	\P\left(\#\{\sigma_i \in \Omega_1\}=0\right) \leq \P \Big( \big|\#\{\sigma_i \in \Omega_1\}-\E[\#\{\sigma_i \in \Omega_1\}]\big| \geq \frac{\E[\#\{\sigma_i \in \Omega_1\}]}{2}\Big)=o(1).
\end{align}

Now we have reduced the proof of (\ref{vanishing_prob}) to proving the expectation bounds in (\ref{omega2_exp}), (\ref{omega1_exp}) and the concentration bound in (\ref{concentrate}).  Their proof consist of two main steps. 
We first use the explicit formulae for the eigenvalue correlation functions
of the Ginibre ensemble to show that (\ref{omega2_exp})-(\ref{concentrate}) hold true for the Gaussian case. 
In fact, we will need a slightly modified version of these estimates where the 
counting functions are replaced by a smooth test functions supported on the corresponding $\Omega$ domains,
see Lemma~\ref{Ginibre_estimate} below, whose proof is an easy consequence of 
our estimates on the Ginibre correlation kernel from~\cite{2206.04443}. 
In the second step we then use the Green function comparison theorem (GFT) to extend the above estimates to general i.i.d. matrices.
In the rest of this section we now introduce some tools for this second step and explain the strategy.

To perform a GFT analysis
we rely on Girko's Hermitization formula \cite{MR773436} in the form introduced by Tao and Vu \cite{MR3306005}:
\begin{align}\label{linear_stat}
	\sum_{i=1}^{n} f(\sigma_i)=&-\frac{1}{4 \pi} \int_{\C} \Delta f(z) \int_0^T \Im \Tr G^{z}(\ii \eta) \dd \eta \dd^2 z+\frac{1}{4 \pi} \int_{\C} \Delta f(z) \log |\det (H^{z}-\ii T)| \dd^2 z,
\end{align} 
for any $T>0$ and for any compactly supported smooth test function $f \in C_c^{2}(\C)$.
Here we recall the definition of the $2n \times 2n$ Hermitian matrix  $H^{z}$ and its resolvent $G^{z}$ from~\eqref{initial1}:
\begin{align}\label{initial}
	H^{z}:=\begin{pmatrix}
		0  &  X-z  \\
		X^*-\overline{z}   & 0
	\end{pmatrix}, \qquad G^{z}(w):=(H^{z}-w)^{-1}, \quad w \in \C\setminus \R,~z\in \C.
\end{align}
The $2\times 2$ block structure of $H^z$ induces a symmetric spectrum around zero, i.e. the eigenvalues of $H^{z}$ are $\{\lambda^{z}_{\pm i}\}_{i\in \llbracket 1, n \rrbracket}$ (labelled in a non-decreasing order) with $\lambda_{-i}^z=-\lambda_i^z$ for $i\in \llbracket 1, n \rrbracket$. Note that $\{\lambda_i^{z}\}_{i\in \llbracket 1, n \rrbracket}$ exactly coincide with the singular values of $X-z$.  As a consequence of the spectral symmetry of $H^{z}$, we find that
$$G_{vv}^{z} (\ii \eta)=\ii \Im G_{vv}^{z}(\ii \eta), \quad \Im G^{z}_{vv}(\ii \eta) >0, \qquad   v \in [2n], \quad \eta>0.$$

Our fundamental input, the \emph{local law for $G^z$} stated below in Theorem~\ref{local_thm}, asserts
that as $n\to \infty$ the resolvent $G^z$ becomes approximately deterministic.  Its deterministic approximation is given by
\begin{align}\label{Mmatrix}
	M^{z}(\ii \eta)=\begin{pmatrix}
		m^{z}(\ii \eta)  &  \mathfrak{m}^{z}(\ii \eta)  \\
		\overline{\mathfrak{m}^{z}}(\ii \eta)   & m^{z}(\ii \eta)
	\end{pmatrix},
\end{align}
where ${m}^z$ is the unique solution of the scalar equation
\begin{align}
	\label{eq:scmde}
	-\frac{1}{{m}^z(w)}=w+{m}^z(w)-\frac{|z|^2}{w+{m}^z(w)}, \quad \mbox{with}\quad \mathrm{Im}[m^z(w)]\mathrm{Im}w>0,
\end{align}
and 
\begin{align}\label{m_1m_2} 
	\mathfrak{m}^{z}(\ii \eta):=-zu^z(\ii \eta), \quad u^z(\ii \eta):=\frac{\Im {m}^z(\ii \eta)}{\eta+\Im {m}^z(\ii \eta)}.
\end{align}
By taking the real part of \eqref{eq:scmde} it readily follows that on the imaginary axis $m^z$ is purely imaginary, hence ${m}^z(\ii \eta)=\ii \Im {m}^z(\ii \eta)$ (which also implies that $u^z(\ii\eta)$ is real). In addition, by \cite[Lemma 3.3]{MR3770875} we have that
\begin{align}\label{m_1}
	\Im m^z(\ii \eta) \sim \begin{cases}
		\frac{\eta}{|1-|z|^2|+\eta^{2/3}}, &\qquad |z| > 1\\
		\eta^{1/3}+|1-|z|^2|^{1/2} , &\qquad |z| \leq 1
	\end{cases},
	\qquad  0\leq \eta\leq 1.
\end{align}
With these notations we have the
local law for the resolvent $G^z$ on the imaginary axis:
\begin{theorem}[ Theorem 5.2 \cite{MR3770875},  Proposition 1 \cite{MR4221653}]\label{local_thm}
	For any deterministic vectors $\mathbf{x}, \mathbf{y}\in\C^{2n}$
	and matrix $A\in \C^{2n \times 2n}$, for any $z$ with $-Cn^{-1/2} \lesssim |z|-1 \leq n^{-1/2+\tau}$ and $n^{-1} \leq \eta \leq 1$, we have
	\begin{align}\label{isotropic}
		\big| \langle \mathbf{x},  (G^{z}(\ii \eta)-M^{z}(\ii \eta)) \mathbf{y} \rangle\big| \prec \|\mathbf{x}\| \|\mathbf{y}\| \big(\frac{1}{n^{1/2} \eta^{1/3}}+\frac{1}{n\eta}\big),
	\end{align}
	\begin{align}\label{average}
		\big|\big\<A\big( G^{z}(\ii \eta)-M^{z}(\ii \eta)\big)\big\>\big| \prec \frac{\|A\|}{n\eta}.
	\end{align}
\end{theorem}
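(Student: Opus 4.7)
The plan is to follow the self-consistent resolvent strategy adapted to the $2\times 2$ Hermitization $H^z$, as developed in a long line of work on Hermitian and non-Hermitian local laws. First I would verify that $M^z(\ii\eta)$ defined by~\eqref{Mmatrix}--\eqref{m_1m_2} is the unique solution with positive imaginary part of the matrix Dyson equation
\begin{equation*}
  -\bigl(M^z(w)\bigr)^{-1} = w - Z + \mathcal{S}[M^z(w)], \qquad Z := \begin{pmatrix} 0 & z\, I_n \\ \bar{z}\, I_n & 0 \end{pmatrix},
\end{equation*}
with $\mathcal{S}$ the covariance super-operator of $H^z$. A direct block computation reduces this matrix equation to the scalar equation~\eqref{eq:scmde} together with~\eqref{m_1m_2}, and the sign convention singles out $m^z$ uniquely.

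Next I would establish quantitative bounds on the stability operator $\mathcal{L}^z(w) := 1 - \mathcal{C}_{M^z}\mathcal{S}$, where $\mathcal{C}_{M^z}[R] := M^z R M^z$. In the regime $\bigl||z|-1\bigr|\lesssim n^{-1/2}(\log n)^{100}$ a single eigenvalue of $\mathcal{L}^z$ becomes small (of order $\Im m^z + \eta \sim \eta^{1/3}$ at $|z|=1$), reflecting the cubic-root cusp behaviour visible in~\eqref{m_1}; tracking this mode requires isolating its unstable eigendirection in the spirit of~\cite{MR3770875}. The resolvent then solves the perturbed equation
\begin{equation*}
  \mathcal{L}^z(w)\bigl[G^z - M^z\bigr] = M^z \bigl(H^z - \mathcal{S}[G^z]\bigr)\,G^z + (\text{lower order}),
\end{equation*}
so that after inverting $\mathcal{L}^z$ one reduces matters to controlling the linear fluctuation $\<A,(H^z-\mathcal{S}[G^z])G^z\>$ and its isotropic analogue $\<\mathbf{x},(H^z-\mathcal{S}[G^z])G^z \mathbf{y}\>$.

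These fluctuation terms would then be estimated by a high-order cumulant expansion in the entries of $X$, systematically gaining a factor $\psi := (n\eta)^{-1}$ per unmatched index and using the Ward identity $G^z(G^z)^* = \eta^{-1}\Im G^z$ to close the estimates. The averaged bound~\eqref{average} enjoys the optimal $1/(n\eta)$ rate because the normalized trace absorbs the small direction of $\mathcal{L}^z$, whereas in the isotropic bound~\eqref{isotropic} that direction cannot be averaged out, producing the additional $n^{-1/2}\eta^{-1/3}$ contribution that is responsible for the weaker rate. To make the estimates global I would run a bootstrap in $\eta$: starting at $\eta = 1$, where $\lVert G^z\rVert \le 1$ makes the bound trivial, and descending along a polynomial grid down to $\eta \sim n^{-1}$, using Lipschitz continuity of $G^z$ and $M^z$ in $(z,\eta)$ together with a standard net argument to upgrade pointwise to uniform high-probability control.

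The main obstacle is the cusp analysis of $\mathcal{L}^z$: unlike the Wigner bulk setting where the stability operator is uniformly invertible, here one must follow the small eigenvalue and its eigendirection through every step of the cumulant expansion. Balancing the degeneracy of $\mathcal{L}^z$ against the $\psi$-gain from unmatched indices is precisely what pins down the $n^{-1/2}\eta^{-1/3}$ rate in~\eqref{isotropic}, and any looseness in this accounting would propagate as a loss in the error term and eventually spoil the applications in Section~\ref{sec:proof_main_theorem}.
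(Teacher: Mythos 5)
This theorem is not proved in the paper at all: it is imported verbatim from \cite{MR3770875} (Theorem 5.2) and \cite{MR4221653} (Proposition 1), so there is no internal proof to compare against. Your outline --- matrix Dyson equation for the Hermitization, cusp analysis of the stability operator $1-\mathcal{C}_{M^z}\mathcal{S}$ with its single small eigenvalue of order $\Im m^z+\eta$, cumulant/fluctuation-averaging estimates closed by the Ward identity, and a bootstrap in $\eta$ from $\eta=1$ down to $\eta\sim n^{-1}$ --- is a faithful high-level description of exactly the strategy used in those references, so you are on the right track rather than proposing something new; note only the small sign slip (with $H^z=W-Z$ and $Z$ as you define it, the MDE reads $-(M^z)^{-1}=w+Z+\mathcal{S}[M^z]$, not $w-Z+\mathcal{S}[M^z]$) and that the genuinely hard steps (tracking the unstable eigendirection through the expansion, and the fluctuation averaging that yields the extra $(n\eta)^{-1}$ in \eqref{average}) are named but not carried out.
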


To apply Girko's formula~\eqref{linear_stat}
for the proof  of (\ref{omega2_exp})-(\ref{concentrate}) we need to regularize the indicator function on the corresponding  
$\Omega=\Omega_1,\Omega_2$ domains,  and we also split the $\eta$-integration into two regimes
which require different analysis. Hence, with some properly chosen smooth cut-off function $f$ (see \eqref{f1function}--\eqref{deri_cond_2} below),
we have
\begin{align}\label{linear_stat_2}
	\#\{\sigma_i \in \Omega\} \approx \sum_{i=1}^{n} f(\sigma_i) \approx -\frac{1}{4 \pi} \int_{\C} \Delta f(z) \left(\int_0^{\eta_0}+\int_{\eta_0}^{T}\right) \Im \Tr G^{z}(\ii \eta) \dd \eta \dd^2 z=:I_{0}^{\eta_0}+ I_{\eta_0}^{T},
\end{align}
with $T:=n^{100}$ and $\eta_0$ is an intermediate cutoff level $n^{-7/8-\tau}$ with the fixed $\tau>0$ from~\eqref{omega_0}. 
The small-$\eta$ regime, $I_0^{\eta_0}$, and the large-$\eta$ regime, $ I_{\eta_0}^{T}$, are analysed
separately. 

For very small $\eta$, the local law (Theorem~\ref{local_thm}) does not effectively control the resolvent $G^z(\ii \eta)$ 
as it is dominated by the smallest (in modulus) eigenvalue $\lambda_1^z$ of $H^z$
(equivalently,  $\lambda_1^z$ is the smallest singular value of $X-z$).
We need a separate lower tail estimate
for it, which is done again in two steps: first for Ginibre matrices and then we extend it to i.i.d. matrices via GFT.
Besides the level repulsion effect, this accurate estimate also contains an additional  small factor 
due to the fact that $z$ is far outside of the unit disk, although this second effect is 
needed only for the Ginibre ensemble in this paper. 

We now state the precise lower tail result for the Ginibre ensemble.
Its proof, which is given in the appendix,
relies on the explicit formula for the correlation functions of the eigenvalues of $(X-z)^*(X-z)$ from \cite{MR2162782},
or, alternatively, on the supersymmetric representation of its resolvent from~\cite{MR4408004}.
In the sequel we denote by $\P^\mathrm{Gin}$, $\E^\mathrm{Gin}$ and $\V^\mathrm{Gin}$ the corresponding probability, expectation and variance.  

\begin{proposition}\label{prop:tail}
	Fix $\delta:=|z|^2-1$ with $n^{-1/2}\ll\delta\ll 1$  and let  $\lambda_1^z$ be the smallest singular value of $X-z$, 
	where $X$ is a  complex Ginibre matrix.
	Then there exists a constant $C>0$, independent of $n$ and $\delta$, such that for any $y\le C/(n\delta^2)$
	we have the following lower tail bound 
	\begin{equation}\label{tail}
		\mathbf{P}^{\mathrm{Gin}}\left(\lambda_1^z\le y\delta^{3/2}\right)\lesssim y^2 (n\delta^2)^{4/3} e^{-n\delta^2(1+\OO(\delta))/2}.
	\end{equation}
\end{proposition}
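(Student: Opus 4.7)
The plan is to exploit the explicit determinantal structure of the singular values of \(X-z\) available for the complex Ginibre ensemble. Set \(\mu_i:=(\lambda_i^z)^2\); then the event \(\{\lambda_1^z\le y\delta^{3/2}\}\) coincides with \(\{\mu_1\le s\}\) for \(s:=y^2\delta^3\). By~\cite{MR2162782}, the process \(\{\mu_i\}\) is determinantal on \([0,\infty)\) with a correlation kernel \(K_n^z\) generated by the orthonormal polynomials with respect to the weight \(w(\mu)=e^{-n\mu}I_0(2n|z|\sqrt{\mu})\). The Markov bound applied to the first correlation function then yields
\[
\P^{\mathrm{Gin}}(\mu_1\le s)\le \E^{\mathrm{Gin}}[\#\{i:\mu_i\le s\}]=\int_0^s K_n^z(\mu,\mu)\,\dd\mu\le s\cdot\!\!\sup_{\mu\in[0,s]}K_n^z(\mu,\mu),
\]
so the task reduces to proving \(K_n^z(\mu,\mu)\lesssim \delta^{-3}(n\delta^2)^{4/3}\,e^{-n\delta^2(1+\OO(\delta))/2}\) uniformly on \([0,s]\). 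With this reduction the \(y^2\) in~\eqref{tail} emerges cleanly from \(s=y^2\delta^3\), while the remaining factor \(\delta^{-3}(n\delta^2)^{4/3}e^{-n\delta^2/2}\) encodes the large-deviation cost of placing an eigenvalue of \(X\) outside the unit disk together with the local cusp density factor at zero.

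To estimate \(K_n^z(\mu,\mu)\) I would prefer the supersymmetric representation of the resolvent from~\cite{MR4408004} over a direct Christoffel--Darboux route, since SUSY immediately produces a low-dimensional contour integral with an explicit action and bypasses the identification of the non-classical (modified Laguerre-with-Bessel) orthonormal polynomials dictated by \(w\). The relevant saddle sits at the image of the typical rightmost eigenvalue of \(X\), at a position dictated by \(|z|^2-1=\delta\). Expanding the action to Gaussian order about this saddle produces the factor \(e^{-n\delta^2/2}\), while the fluctuation integral supplies the prefactor \(\delta^{-3}(n\delta^2)^{4/3}\); the exponent \(4/3\) reflects the cubic tangency that governs the cusp in the density of states of \(H^z\) at the origin, visible in~\eqref{m_1} through the transition at \(\lambda\sim\delta^{3/2}\) between \(\Im m^z\sim\eta/\delta\) and \(\Im m^z\sim\eta^{1/3}\). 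Pushing the expansion one further order in \(\delta\) yields the \(\OO(\delta)\) correction inside the exponent.

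The main obstacle is executing this saddle-point analysis uniformly for \(\mu\) throughout \([0,s]\). Because \(y\) can be as large as \(C/(n\delta^2)\), the cutoff \(s\) can reach the scale \(C\delta/n\) at which the argument \(2n|z|\sqrt{\mu}\) of the Bessel function in \(w\) crosses from its small- to its large-argument regime; the contour deformation must be chosen consistently across this transition, and the bound on \(K_n^z(\mu,\mu)\) cannot merely be a point evaluation at \(\mu=0\). A secondary subtlety is pinning the precise constant \(1/2\) in the exponent, which I expect to follow from the Gaussian tail asymptotics of the incomplete exponential sum \(e^{-n|z|^2}\sum_{k=0}^{n-1}(n|z|^2)^k/k!\) intrinsic to the complex Ginibre determinantal structure; this sum is known to produce precisely the suppression \(e^{-n\delta^2/2}\) of the one-point function of \(X\) outside the unit disk, and its appearance here after the saddle analysis should fix the constant.
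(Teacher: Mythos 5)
Your reduction is exactly the paper's: squaring the event, applying Markov to the first correlation function of the determinantal process of squared singular values, and reducing to a uniform bound on the one-point function $K_n(\mu,\mu)\lesssim n^{4/3}\delta^{-1/3}e^{-n\delta^2(1+\OO(\delta))/2}$ for $\mu\lesssim 1/(n^2\delta)$ (your $\delta^{-3}(n\delta^2)^{4/3}$ equals $n^{4/3}\delta^{-1/3}$, so the target is right, and the prefactor $y^2$ does emerge from the length of the integration interval exactly as you say). The paper in fact gives both of the routes you weigh against each other: a complete proof via the kernel of \cite{MR2162782} and a sketched second proof via the SUSY representation of $\E\Tr(Y^z-w)^{-1}$ from \cite{MR4408004}. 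Your stated reason for preferring SUSY is moot, however: \cite{MR2162782} already delivers $K_n(u,u)$ as an explicit double contour integral with phase $f(w)=w^2+\log(|z|^2-w^2)$ and a Bessel-kernel factor, so no identification of the non-classical orthogonal polynomials is needed; one goes straight to steepest descent.

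The genuine gap is that the kernel bound itself --- which is the entire technical content of the proof --- is asserted on heuristic grounds but not established. Concretely, what is missing is: (i) the location of the saddles of $f$ (at $0$ and $\pm\sqrt{\delta}$, not ``at the image of the rightmost eigenvalue''), the specific admissible contours ($w=\ii s$ and $\zeta=\sqrt{\delta}+t\pm\ii t$), and the monotonicity of $\Re f$ along them, which is what makes the estimate global rather than local; (ii) the splitting into regimes $s,t\lessgtr K\sqrt{\delta}$ with the small-argument expansion $K_B(x,y)=\tfrac{1}{2\pi^2}+\OO(|x|^2+|y|^2)$ near the saddle and the crude bound $|K_B(x,y)|\lesssim e^{|x|+|y|}$ away from it, beaten by the exponential decay of the phase; and (iii) the actual source of the exponent $4/3$, namely the cubic term $4\sqrt{\delta}\,t^3$ in $\Re f(\sqrt{\delta}+t\pm\ii t)=\delta+2t^4+4\sqrt{\delta}t^3+\cdots$, which turns the $\zeta$-fluctuation integral into $\int_0^\infty t\,e^{-n\sqrt{\delta}t^3}\dd t\sim(n\sqrt{\delta})^{-2/3}$, while $e^{-n\delta^2/2}$ arises as the difference of the constant terms $f(\ii s)\approx\delta-\delta^2/2$ and $f(\sqrt{\delta}+\cdots)\approx\delta$ rather than from the incomplete exponential sum you invoke. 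Your worry about uniformity in $\mu$ is legitimate but slightly misplaced: the Bessel argument in the kernel is $2n\zeta\sqrt{u}$, which stays $\OO(1)$ near the saddle $\zeta\approx\sqrt{\delta}$ for all $u\lesssim1/(n^2\delta)$; the large-argument regime comes from large $|\zeta|$ or $|w|$ on the contours, not from varying $\mu$. As written, the proposal is a correct plan whose decisive step remains unproved.
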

Recall that $\frac{1}{\pi}\Im m^z(x+\ii 0)$, the self-consistent density of states of $H^z$, has a gap of size $4\delta^3/27$
close to $0$ justifying the $\delta^{3/2}$ scaling in~\eqref{tail} (see the paragraph below \cite[Eq. (18a)]{MR4408004}, we remark that in \cite{MR4408004} we defined $\delta$ with the opposite sign).

\subsection{Sketch of the proof of (\ref{omega2_exp})-(\ref{concentrate})}\label{sec:sketch}
Having introduced  the necessary ingredients, we briefly summarize the strategy to prove (\ref{omega2_exp})-(\ref{concentrate})
for i.i.d. matrices. These  steps will be outlined more precisely  in Section~\ref{sec:proof_main_theorem}
after the necessary cutoff functions are introduced.

The exponential factor in~\eqref{tail}, obtained for $|z|>1$ away from the boundary, ensures that with our choice $\eta_0\ll n^{-7/8}$,
$\E^{\mathrm{Gin}}\big|I_0^{\eta_0}\big|^2$ is negligible, which
implies that the main contribution to both the expectation and the variance of (\ref{linear_stat_2}) comes
from the large-$\eta$ regime, $I_{\eta_0}^{T}$, at least for the Ginibre matrices. Next we will use GFT arguments to extend these Ginibre estimates to generic i.i.d. matrices. 
We first show that $\E|I_0^{\eta_0}|$ is negligible also for i.i.d. matrices using the bound on the resolvent for Ginibre ensemble implied by Proposition \ref{prop:tail} together with a GFT argument. Then we will consider the large-$\eta$ regime
and use another GFT to show that 
$\E[I_{\eta_0}^{T}]$ and $\V[I_{\eta_0}^{T}]$ have the same bound as their  Ginibre counterparts (modulo a negligible error).
Finally, we need a bound on $\V^\mathrm{Gin}[I_{\eta_0}^{T}]$, which is not accessible directly, but can be
deduced indirectly from $\V^{\mathrm{Gin}}( I_0^{\eta_0}+I_{\eta_0}^{T})\approx \V^{\mathrm{Gin}}(\#\{\sigma_i \in \Omega\}) $
and using that 
$\E^{\mathrm{Gin}}\big|I_0^{\eta_0}\big|^2$ is negligible.

\subsection{Comments on the real symmetry class}\label{sec:real}

We stated and proved our main result only for the complex case;
the  proof for the real case would require  two changes.  
First,  the precise form of the cumulant expansions behind the GFT arguments 
slightly depends on the symmetry class: the real case yields some extra terms 
that can be treated routinely (see e.g. \cite{MR4235475, 2102.04330}
for an analogous extension). Second, we prove the lower tail bound in Proposition~\ref{prop:tail}
only for the complex case since  our detailed proof relies on the formula from \cite{MR2162782}
that has no analogue for the real case. The alternative supersymmetric method has both complex and real versions,
the latter being considerably more involved. For the sake of brevity, we  show how the complex version 
can also be used to prove Proposition~\ref{prop:tail} and we omit the more cumbersome details
of the real case, however we have no doubt that this analysis is feasible 
based upon our experience from~\cite{MR4408004, 2105.13720}.
Precise tail bounds in both symmetry classes  for the $|z|\le 1+ O(n^{-1/2})$ regime have already   
been proven in~\cite[Corollary  2.4]{MR4408004} (see also \cite{2105.13720, 2105.13719}). In the real case 
the factor $y^2$  in~\eqref{tail} is replaced with $(y^2 + y\exp{(-\frac{n}{2}(\Im z)^2})$ indicating
a weaker level repulsion near the real axis. This weaker estimate however does not affect the usage of Proposition~\ref{prop:tail}
in the main body of our proof since it is effective only for a small regime of the $z$ parameter.

The rest of the paper is organized as follows. In Section~\ref{sec:proof_main_theorem}, we prove our main theorem
using the  GFTs for the two different $\eta$ regimes as an input.  
Next, we prove the GFT used for the small-$\eta$ integral, i.e. Proposition~\ref{prop1} in Section~\ref{sec:prop}, and
then  in Section~\ref{sec:GFTGFT} we present the proof of Proposition \ref{gft} used for the large-$\eta$ integral. 

\section{Proof of Theorem~\ref{main}}\label{sec:proof_main_theorem}

To use Girko's formula for  proving
(\ref{omega2_exp})-(\ref{concentrate}), we need to first regularize the indicator functions. For the domains
$\Omega_k$, $k=1,2$, given in \eqref{omega_1} and \eqref{omega_2}, we  may choose two smooth 
cut-off functions $f_k^-$  and $f_k^+$ which are supported on a slightly smaller domain 
$\Omega_k^- \subset \Omega_k$ and a slightly larger domain $\Omega_k^+ \supset \Omega_k$ respectively, such that
\begin{align}\label{omega_approx_12}
	\sum_{i=1}^n f^{-}_k(\sigma_i) \leq	\#\{\sigma_i \in \Omega_k\} \leq \sum_{i=1}^n f^{+}_k(\sigma_i), \qquad k=1,2.
\end{align}
In fact, for $k=1$ we will only need the lower bound, while for $k=2$ we need only the upper
bound, so we will define only  $f_1^-$ and  $f_2^+$, the other two cut-off functions are not used in our proof.

More precisely, for the domain $\Omega_1$ given in (\ref{omega_1}), we choose the lower bound cut-off function 
\begin{align}\label{f1function}
	f^{-}_1(z):= g^{-}_1(x) h_1^{-}(y),\qquad z=x+\ii y \in \C,
\end{align} 
where $g^{-}_1(x) \in [0,1]$ and $h_1^{-}(y) \in [0,1]$ are smooth functions given by
\begin{align}\label{gfunction}
	g^{-}_1(x)=\begin{cases}
		1, &  |x-L| \leq 4l_n/5, \\
		0, & |x-L| \geq l_n, 
	\end{cases},\qquad 	h_1^-(y)=\begin{cases}
		1, &  |y|\leq 4h_n/5, \\
		0, &  |y|\geq h_n.
	\end{cases}
\end{align} 
Here, we used the shorthand notations
\begin{align}\label{l_parameter}
	L:=1+\sqrt{\frac{\gamma_n}{4n}}, \qquad l_n:=   \frac{C_n}{\sqrt{4 n \gamma_n}},\qquad h_n:=n^{-1/4+\tau/2},
\end{align}
with $1 \ll C_n \ll \sqrt{\log n}$. Additionally, $g^{-}_1$, $h^{-}_1$  are defined so that their second derivatives can be bounded by
\begin{align}\label{deri_cond}
	\|(g^{-}_1)''\|_1 \lesssim l_n^{-1}, \qquad \|(h_1^{-})''\|_1 \lesssim h_n^{-1}.
\end{align}
For the spectral domain $\Omega_2$ given in (\ref{omega2_exp}), we similarly choose $f_2^{+}(x+\ii y):=g^{+}_2(x) h_2^{+}(y)$ where $g^+_2$ is supported on the regime enlarging the $x$-domain of $\Omega_2$ by $l_n/5$ and $h^+_2$ is supported on the regime enlarging the $y$-domain of $\Omega_2$ by $h_n/5$ (\cf (\ref{gfunction})), such that
\begin{align}\label{deri_cond_2}
	\|(g^{+}_2)''\|_1 \lesssim l_n^{-1}, \qquad \|(h_2^{+})''\|_1 \lesssim h_n^{-1}.
\end{align}

We are now ready to present the proof of our main result.  

\begin{proof}[Proof of Theorem~\ref{main}]
	With the  cut-off functions $f_1^{-}$ and $f_2^{+}$ as above we have~\eqref{omega_approx_12}
	and from (\ref{deri_cond}) and (\ref{deri_cond_2}), we also have
	\begin{align}\label{deltaf}
		\|\Delta f^-_1\|_{1},~\|\Delta f^+_2\|_{1} \lesssim \frac{h_n}{l_n} \lesssim n^{\frac{1}{4}+\frac{\tau}{2}} \sqrt{\log n}.
	\end{align}

	We will show the following results in expectation 
	\begin{align}\label{main_part_1}
		\E\Big[\sum_{i=1}^n f^{+}_2(\sigma_i)\Big]=o(1);\qquad  \E\Big[\sum_{i=1}^n f^{-}_1(\sigma_i)\Big] \geq c_0,
	\end{align}
	for some constant $c_0>0$, and the concentration result
	\begin{align}\label{main_part_2}
		\E\Big|\sum_{i=1}^n f^{-}_1(\sigma_i)-\E\Big[\sum_{i=1}^n f^{-}_1(\sigma_i)\Big] \Big|=o(1) \Big( \E\Big[\sum_{i=1}^n f^{-}_1(\sigma_i)\Big] \Big).
	\end{align}
	These two key results easily imply Theorem~\ref{main}.
	More precisely, combining the second inequality in (\ref{omega_approx_12}) for $k=2$ with the first estimate in (\ref{main_part_1}), we have
	\begin{align}
		\P(\#\{\sigma_i \in \Omega_2\} \geq 1)\leq\P\Big(\sum_{i=1}^n f^{+}_2(\sigma_i) \geq 1\Big) \leq \E\Big[\sum_{i=1}^n f^{+}_2(\sigma_i)\Big]=o(1).
	\end{align}
	Moreover, using the first inequality in (\ref{omega_approx_12}) for $k=1$, the Markov inequality in combination with
	(\ref{main_part_1}) and (\ref{main_part_2}), we have
	\begin{align}
		\P&\left(\#\{\sigma_i \in \Omega_1\}=0\right) \leq 	\P\Big(\sum_{i=1}^n f^{-}_1(\sigma_i)=0\Big) \nonumber\\
		&\qquad \qquad \leq \P \left( \left|\sum_{i=1}^n f^{-}_1(\sigma_i)-\E\Big[\sum_{i=1}^n f^{-}_1(\sigma_i)\Big]\right| \geq \frac{\E\Big[\sum_{i=1}^n f^{-}_1(\sigma_i)\Big]}{2}\right)=o(1).
	\end{align}
	This shows that the key bounds (\ref{main_part_1}) and (\ref{main_part_2}) are indeed sufficient for the proof of Theorem~\ref{main}.
	
	The first step to prove these key bounds
	is to use the explicit formula for the eigenvalue correlation functions of the complex Ginibre ensemble
	to show that (\ref{main_part_1}) and (\ref{main_part_2})  hold true for the  Gaussian case. 
	\begin{lemma}\label{Ginibre_estimate} 
		For the complex Ginibre ensemble, we have 
		\begin{align}\label{main_part_1_ginibre}
			\E^{\mathrm{Gin}}\Big[\sum_{i=1}^n f^{+}_2(\sigma_i)\Big]\lesssim e^{-4C_n/5};\qquad  \E^{\mathrm{Gin}}\Big[\sum_{i=1}^n f^{-}_1(\sigma_i)\Big] \gtrsim \sinh(4C_n/5),
		\end{align}
		with any $C_n \ll \sqrt{\log n}$ and
		\begin{align}\label{main_part_2_ginibre}
			\V^{\mathrm{Gin}}\Big[\sum_{i=1}^n f^{-}_1(\sigma_i) \Big]\lesssim e^{-2C_n/5} \Big( \E^{\mathrm{Gin}}\Big[\sum_{i=1}^n f^{-}_1(\sigma_i)\Big] \Big)^2.
		\end{align}
	\end{lemma}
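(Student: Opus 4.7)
The plan is to use that the complex Ginibre eigenvalues form a determinantal point process (DPP) on $\C$ with explicit Hermitian kernel
\[
K_n(z,w) = \frac{n}{\pi}\,\ee^{-n(|z|^2+|w|^2)/2}\sum_{k=0}^{n-1}\frac{(n z\overline{w})^k}{k!},
\]
so both the expectation and variance of any smooth linear statistic $\sum_i f(\sigma_i)$ reduce to explicit integrals against the one-point density $\rho_n^{(1)}(z):=K_n(z,z)$ and against $|K_n(z,w)|^2$. The main analytic input, which I would simply cite from the companion paper~\cite{2206.04443}, is the uniform asymptotic of $\rho_n^{(1)}$ just outside the unit disk: writing $\delta:=|z|^2-1$, the identity $\rho_n^{(1)}(z) = (n/\pi)\,Q(n, n|z|^2)$ combined with the Temme uniform expansion of the regularized incomplete gamma function and the large-argument tail of the complementary error function yields
\[
\rho_n^{(1)}(z) = \frac{\sqrt n}{\pi\sqrt{2\pi}\,\delta}\,\ee^{-n\delta^2/2}(1 + o(1)), \qquad n^{-1/2} \ll \delta \ll 1.
\]

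To compute the expectations I would change variables to match the aspect ratio of $\Omega_1,\Omega_2$, setting $x = L + s/\sqrt{4n\gamma_n}$, $y = r/n^{1/4}$. Taylor expansion of $|z|^2$ gives $n\delta^2 = \gamma_n + 2s + 2r^2\sqrt{\gamma_n} + o(1)$ uniformly on $|s|\le C_n\ll\sqrt{\gamma_n}$ and in the range $|r|\lesssim \gamma_n^{-1/4}$ where the integrand is not already exponentially negligible, so
\[
\rho_n^{(1)}(z)\,\dd x\,\dd y = \frac{n^{1/4}\ee^{-\gamma_n/2}}{2\pi\sqrt{2\pi}\,\gamma_n}\,\ee^{-s - r^2\sqrt{\gamma_n}}(1 + o(1))\,\dd s\,\dd r.
\]
The precise form of $\gamma_n$ in~\eqref{eq:gamma} is calibrated exactly so that inserting $\ee^{-\gamma_n/2} = (2\pi^4)^{1/4}(\log n)^{5/4} n^{-1/4}$ and performing the Gaussian integral $\int_\R \ee^{-r^2\sqrt{\gamma_n}}\,\dd r = \sqrt{\pi}\,\gamma_n^{-1/4}$ collapses the entire prefactor to $1 + o(1)$, leaving the clean effective measure $\ee^{-s}\,\dd s$ in the $s$-variable. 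Since $f_1^-, f_2^+$ factorize as products in $x$ and $y$, the two-dimensional integrals become tensor products: the $r$-integral captures essentially the full Gaussian mass on $|r|\le \tfrac{4}{5}n^{\tau/2}$; the $s$-integral for $f_1^-$ is $\int_{-4C_n/5}^{4C_n/5}\ee^{-s}\,\dd s = 2\sinh(4C_n/5)$; and for $f_2^+$ the effective $s$-support sits in $[4C_n/5, +\infty)$ (truncated on the right at the outer edge of $\Omega_2^+$, which is exponentially irrelevant), giving $\lesssim \ee^{-4C_n/5}$. This yields~\eqref{main_part_1_ginibre}.

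For the variance~\eqref{main_part_2_ginibre} I would invoke the standard Hermitian-kernel DPP identity
\[
\V^{\mathrm{Gin}}\Bigl[\sum_i f_1^-(\sigma_i)\Bigr] = \int (f_1^-)^2\,\rho_n^{(1)}\,\dd^2z \;-\; \iint f_1^-(z)\,f_1^-(w)\,|K_n(z,w)|^2\,\dd^2z\,\dd^2w.
\]
Dropping the non-negative double integral and using $0\le f_1^-\le 1$ (so $(f_1^-)^2 \le f_1^-$) gives the trivial-looking bound $\V^{\mathrm{Gin}}\le \E^{\mathrm{Gin}}[\sum_i f_1^-(\sigma_i)]$. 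Combining with the lower bound $\E^{\mathrm{Gin}}[\sum_i f_1^-(\sigma_i)] \gtrsim \sinh(4C_n/5) \gtrsim \ee^{4C_n/5}$, valid in the regime $1\ll C_n$ assumed before the lemma, yields $\V^{\mathrm{Gin}}/(\E^{\mathrm{Gin}})^2 \lesssim 1/\sinh(4C_n/5) \lesssim \ee^{-4C_n/5} \le \ee^{-2C_n/5}$, which is the claimed bound. No nontrivial anti-bunching cancellation from the two-point integral is required; the exponential largeness of $\E^{\mathrm{Gin}}$ does all the work. The main obstacle is really the density asymptotic itself, where both the polynomial prefactor and the $n\delta^2$ exponent must be computed with errors smaller than the $C_n$-dependent quantities being measured---this is precisely what forces the five-term calibration of $\gamma_n$ in~\eqref{eq:gamma}---but this computation has already been carried out in~\cite{2206.04443} and enters the present lemma as a black box, making the remaining argument routine.
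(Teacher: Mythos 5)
Your proposal is correct and follows essentially the same route as the paper's proof: both reduce the expectation and variance to the determinantal one-point and two-point formulas, feed in the kernel asymptotics from the companion paper \cite{2206.04443} (your rescaling $x=L+s/\sqrt{4n\gamma_n}$, $y=r/n^{1/4}$ reproduces exactly the $2\erf(s)\sinh(t)$ asymptotic the paper cites), and bound the variance by discarding the non-negative double-integral term. Your additional observation $(f_1^-)^2\le f_1^-$ even yields the marginally stronger ratio $e^{-4C_n/5}$ in place of the paper's $e^{-2C_n/5}$.
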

	
	The  proof  of Lemma \ref{Ginibre_estimate} is given in Section~\ref{sec34}.
	The next step is to extend this lemma to generic i.i.d. matrices.
	From now on, we use $f$ to denote either of the smooth cut-off functions $f^{-}_1, f^{+}_2$ for simplicity of notation.
	Lacking explicit 
	formula for eigenvalue correlations for i.i.d. matrices, we first link the linear statistics in Lemma \ref{Ginibre_estimate} to the Green function of $H^{z}$ using  Girko's formula in (\ref{linear_stat}).  Choosing $T$ sufficiently large, e.g. $T=n^{100}$, we next show that the last term in (\ref{linear_stat}) is very small with very high probability. Note that
	\begin{align}\label{logT}
		\log |\det (H^{z}-\ii T)|=&2n \log T +\sum_{j} \log \left( 1+\Big(\frac{\lambda_j^{z}}{T}\Big)^2\right) =2n \log T+ \OO\left( \frac{\Tr (H^{z})^2}{T^2}\right)\nonumber\\
		=& 2n \log T+\OO_\prec\left( \frac{n^2}{T^2}\right),
	\end{align}
	where we used that $|x_{ij}| \prec n^{-1/2}$ from (\ref{eq:hmb}). Using the $L^1$ norm of $\Delta f$ in (\ref{deltaf}), we have
	\begin{align}\label{I_3}
		\Big|\frac{1}{4 \pi} \int_{\C} \Delta f(z) \log |\det (H^{z}-\ii T)| \dd^2 z\Big|=\OO_\prec( n^{-100}).
	\end{align}
	Therefore, we have
	\begin{align}\label{I_s+I_b}
		\sum_{i=1}^n f(\sigma_i)=&-\frac{1}{4 \pi}  \int_{\C} \Delta f(z) \Big(\int_0^{\eta_0}+\int_{\eta_0}^{T}\Big)  \Im\Tr G^{z}(\ii \eta) \dd \eta \dd^2 z+\OO_\prec( n^{-100})\nonumber\\
		&=:I_{0}^{\eta_0}(f)+I_{\eta_0}^{T}(f)+\OO_\prec( n^{-100}),
	\end{align}
	where we split the $\eta$ integral into the two parts at the truncation level $\eta_0:=n^{-7/8-\tau}$, where $\tau>0$ is the fixed small number from (\ref{omega_0}).  
	Then the proof of (\ref{main_part_1}) and (\ref{main_part_2}) is reduced to studying $I_{0}^{\eta_0}(f)$ and $I^{T}_{\eta_0}(f)$ 
	respectively.  The idea is to first estimate $I_{0}^{\eta_0}(f)$ and $I^{T}_{\eta_0}(f)$ for the Ginibre ensemble and then extend these estimates to i.i.d. matrices using GFT arguments respectively.

	Next we outline the three main steps of the rest of the  proof, the precise details will be given in the following three subsections, respectively.
	\begin{enumerate}
		\item[Step 1.] For the Ginibre ensemble, we use the explicit lower tail bound for the smallest eigenvalue $\lambda_i^{z}$ 
		in Proposition~\ref{prop:tail} to show  that (see Lemma~\ref{lemma_small} below)
		\begin{equation}\label{Evar}
			\E^{\mathrm{Gin}}\Big[\big|I_{0}^{\eta_0}(f)\big|^2\Big]=o(1), \quad \qquad f=f_1^- \mbox{ or } f_2^+.
		\end{equation}
		Combining this with Lemma~\ref{Ginibre_estimate}, we have
		\begin{align}\label{some_Gini_1}
			\E^{\mathrm{Gin}}[I_{\eta_0}^{T}(f_2^{+})] =o(1), \qquad \E^{\mathrm{Gin}}[I_{\eta_0}^{T}(f^{-}_1)] \geq  c, 
		\end{align}
		for some constant $c>0$, and
		\begin{align}\label{some_Gini_2}
			\E^{\mathrm{Gin}}\big( I_{\eta_0}^{T}(f^{-}_1) -\E^{\mathrm{Gin}}[I_{\eta_0}^{T}(f^{-}_1)]\big)^2=o(1)\big(\E^{\mathrm{Gin}}[I_{\eta_0}^{T}(f^{-}_1)]\big)^2. 
		\end{align}

		\item[Step 2.] We next use a GFT argument (see Proposition \ref{prop2} below) together with the corresponding estimate of the resolvent (see Lemma \ref{lemma00} below) for the Ginibre ensemble to show 
		\begin{align}\label{step2_small}
			\E|I_0^{\eta_0}(f)|=o(1), \quad \qquad f=f_1^- \mbox{ or } f_2^+.
		\end{align}
		This directly implies that
		\begin{align}
			\E \Big[\sum_{i=1}^n f(\sigma_i)\Big]=& \E[I_{\eta_0}^{T}(f)]+o(1),\nonumber\\
			\label{mainterm}
			\E\Bigg| \sum_{i=1}^n f(\sigma_i)-\E \Big[\sum_{i=1}^n f(\sigma_i)\Big]\Bigg|=&\E\big| I_{\eta_0}^{T}(f) -\E[I_{\eta_0}^{T}(f)]\big|+o(1).
		\end{align}
		
		\item[Step 3.] For the large--$\eta$ integral $I_{\eta_0}^{T}$, we use another GFT argument (see Proposition \ref{gft} below) to extend the corresponding Ginibre estimates~\eqref{some_Gini_1}--\eqref{some_Gini_2} obtained in Step 1 to i.i.d. matrices, \ie 
		\begin{align}
			\E\Big[I_{\eta_0}^{T}(f^{+}_2)\Big]=o(1);\qquad  \E\Big[I_{\eta_0}^{T}(f^{-}_1)\Big] \geq c,\nonumber\\
			\label{genvar}
			\E\Big|I_{\eta_0}^{T}(f^{-}_1)-\E\Big[I_{\eta_0}^{T}(f^{-}_1)\Big] \Big|^2=o(1) \Big( \E\Big[I_{\eta_0}^{T}(f^{-}_1)\Big] \Big)^2.
		\end{align} 
		
	\end{enumerate}
	
	The variance bound~\eqref{genvar}  directly implies
	$$\E\big| I_{\eta_0}^{T}(f^{-}_1) -\E[I_{\eta_0}^{T}(f^{-}_1)] \big| 
	=o(1)\big|\E[I_{\eta_0}^{T}(f^{-}_1)]\big|.
	$$
	Combining this with~\eqref{mainterm} for $f=f_1^-$,
	we  proved the concentration estimate in (\ref{main_part_2}). It is also straightforward to prove the expectation estimates in (\ref{main_part_1}) using the first line of~\eqref{mainterm}  and the corresponding expectation estimates in~\eqref{genvar}. 
	Hence we completed the proof of Theorem~\ref{main}.
\end{proof}

Notice that our strategy follows a somewhat unconventional indirect route. Typical proofs based upon the Green function comparison
method assume that all necessary information is available for the Gaussian model. This does not quite hold
in our case; Step 1 above is not a purely explicit calculation.
While the statistics of the eigenvalues of the Ginibre ensemble are fully available via 
explicit formulas in both symmetry classes,  less is known about the eigenvalues of $H^z$. For a fixed $z$,
their correlation functions are known, at least in the complex case, but no explicit formula is 
available for their statistics for different $z$'s. Note that the variance of $I_{\eta_0}^{T}(f)$ in Step 1 involves
the correlation between eigenvalues of $H^z$ and $H^{z'}$ for two different $z, z'$ and
the necessary estimate requires this correlation to decay when $z-z'$ are far away.
While it is plausible  that the local spectral statistics  of $H^z$ and $H^{z'}$, especially their
lowest eigenvalues, are independent
whenever $|z-z'|\gg n^{-1/2}$, this has only been shown \cite{MR4235475,1912.04100}
in the  regime of their typical behavior; now we would need
such information in the atypical  lower tail regime.
Lacking such decorrelation bound, the variance of $I_{\eta_0}^{T}(f)$ is controlled indirectly as
$$
\V^{\mathrm{Gin}}(I_{\eta_0}^{T}(f)) \approx  \V^{\mathrm{Gin}}\big(I_{0}^{\eta_0}(f)+I_{\eta_0}^{T}(f)\big),
$$
as long as  $ \V^{\mathrm{Gin}}(I_{0}^{\eta_0}(f))\ll \V^{\mathrm{Gin}}\big(I_{0}^{\eta_0}(f)+I_{\eta_0}^{T}(f)\big)$. 
Using~\eqref{I_s+I_b}, the explicit  Ginibre eigenvalue statistics
gives the control  for $\V^{\mathrm{Gin}}\big(I_{0}^{\eta_0}(f)+I_{\eta_0}^{T}(f)\big) $. 
We cannot control $ \V^{\mathrm{Gin}}(I_{0}^{\eta_0}(f))$  optimally, but
we chose the threshold $\eta_0$ sufficiently small that 
this variance is negligible by~\eqref{Evar} and thus
no effective decorrelation estimate is necessary.

We now explain in details how Steps 1-3 are proven.

\subsection{Ginibre estimate for $I_{0}^{\eta_0}(f)$ and $I_{\eta_0}^{T}(f)$}

Using the explicit lower tail estimate of the smallest eigenvalue of $H^{z}$ in Proposition \ref{prop:tail} with $X$ being the complex Ginibre ensemble, we obtain the following improved estimate:
\begin{lemma}
	\label{lemma00}
	Fix $\delta=|z|^2-1$ with $n^{-1/2}\ll \delta \ll  1$.
	Then for any $\eta \leq C/(n\delta^{1/2})$, it holds
	\begin{align}\label{gini}
		&\E^{\mathrm{Gin}}\big[ \Im \<G^{z}(\ii \eta)\> \big]  \lesssim  \frac{\eta}{\delta}+n^{1+\xi} \eta \delta,\nonumber\\
		&\E^{\mathrm{Gin}} \Big[\big( \Im \<G^{z}(\ii \eta)\> \big)^2 \Big]  \lesssim \frac{n^{\xi}}{n^{2/3} \delta^{1/3}} e^{-n\delta^2(1+\OO(\delta))/2}+\frac{\eta^2}{\delta^2}+n^{2+\xi} \eta^2 \delta^2,
	\end{align}
	where $\xi>0$ is an arbitrary small constant.
\end{lemma}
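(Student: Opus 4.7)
The plan is to start from the spectral decomposition
\begin{equation*}
\Im\langle G^z(\ii\eta)\rangle = \frac{1}{n}\sum_{j=1}^n \frac{\eta}{(\lambda_j^z)^2+\eta^2},
\end{equation*}
where $0<\lambda_1^z\le\lambda_2^z\le\cdots$ are the singular values of $X-z$. In the regime $\eta\le C/(n\delta^{1/2})$ with $\delta\gg n^{-1/2}$, one has $\eta^{2/3}\ll\delta$, so by~\eqref{m_1} the deterministic target is $\Im m^z(\ii\eta)\sim\eta/\delta$. The strategy is to isolate the contribution of the smallest singular value,
\begin{equation*}
\Im\langle G^z(\ii\eta)\rangle = A+B,\quad A:=\frac{1}{n}\cdot\frac{\eta}{(\lambda_1^z)^2+\eta^2},\quad B:=\frac{1}{n}\sum_{j\ge 2}\frac{\eta}{(\lambda_j^z)^2+\eta^2},
\end{equation*}
and treat the two terms separately, using Proposition~\ref{prop:tail} for $A$ and the local law at a larger scale for $B$.

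For the bulk term $B$, the local law~\eqref{average} is not directly effective at scale $\eta$ (since $1/(n\eta)\gtrsim\delta^{1/2}$ can dominate $\eta/\delta$). I would instead apply it at the larger scale $\eta^*\sim\delta^{3/2}$, where $n\eta^*\gtrsim n\delta^{3/2}\gg 1$, to obtain both the rigidity $\lambda_2^z\gtrsim\delta^{3/2}$ with very high probability and $\Im\langle G^z(\ii\eta^*)\rangle\lesssim\delta^{1/2}$ (using $\Im m^z(\ii\eta^*)\sim\delta^{1/2}$ by~\eqref{m_1}). On this good event, the elementary monotonicity inequality $\eta/(\lambda^2+\eta^2)\le (2\eta/\eta^*)\cdot\eta^*/(\lambda^2+\eta^{*2})$ for $\lambda\ge\eta^*$ would give $B\le (2\eta/\eta^*)\Im\langle G^z(\ii\eta^*)\rangle\lesssim\eta/\delta$. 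Since the bad event has probability $\le n^{-D}$ for any $D$ and $B\le 1/\eta$ deterministically, one concludes $\E^{\mathrm{Gin}}[B]\lesssim\eta/\delta$ and $\E^{\mathrm{Gin}}[B^2]\lesssim(\eta/\delta)^2$, producing the $\eta/\delta$ and $\eta^2/\delta^2$ terms in the stated bounds.

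For the smallest-singular-value term $A$, I plan to use the layer-cake formula: for decreasing $g$ with $g(\infty)=0$, $\E^{\mathrm{Gin}}[g(\lambda_1^z)]=\int_0^\infty(-g'(u))\,\P^{\mathrm{Gin}}(\lambda_1^z\le u)\,du$. Splitting the integral at $u_*:=C/(n\delta^{1/2})$, one uses Proposition~\ref{prop:tail} in the form $\P^{\mathrm{Gin}}(\lambda_1^z\le u)\lesssim u^2 n^{4/3}\delta^{-1/3}e^{-n\delta^2(1+\OO(\delta))/2}$ for $u\le u_*$ and the trivial bound $\P\le 1$ otherwise. With $g(u)=\eta/(u^2+\eta^2)$ the key integrals $\int_0^{u_*}u^3/(u^2+\eta^2)^2\,du\lesssim n^\xi$ and $\int_{u_*}^\infty u/(u^2+\eta^2)^2\,du=\tfrac{1}{2(u_*^2+\eta^2)}$ give $\E^{\mathrm{Gin}}[A]\lesssim n^{1/3+\xi}\delta^{-1/3}\eta\,e^{-n\delta^2/2} + n\eta\delta$; the exponential factor is absorbed into $n^{1+\xi}\eta\delta$ via $e^{-n\delta^2/2}\le (n\delta^2)^{2/3}$, which holds since $n\delta^2\gg 1$. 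Analogously, with $g(u)=\eta^2/(u^2+\eta^2)^2$ and $\int_0^\infty u^3/(u^2+\eta^2)^3\,du=\tfrac{1}{4\eta^2}$, one obtains $\E^{\mathrm{Gin}}[A^2]\lesssim\frac{n^\xi}{n^{2/3}\delta^{1/3}}e^{-n\delta^2(1+\OO(\delta))/2}+n^{2+\xi}\eta^2\delta^2$. The stated bounds then follow from $(A+B)^2\le 2A^2+2B^2$ and additivity of expectation.

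The main obstacle I expect is the bulk estimate for $B$: the local law cannot be applied directly at the scale $\eta$ of interest, so one must work at the larger scale $\eta^*\sim\delta^{3/2}$ where it is effective and then transfer the bound back via monotonicity. This requires a robust rigidity statement $\lambda_2^z\gtrsim\delta^{3/2}$ beyond what the local law at scale $\eta$ provides—though it follows from the local law at scale $\eta^*$, this extension should be verified carefully. Everything else, in particular the analysis of $A$, reduces to bookkeeping of the integration-by-parts calculation against the explicit tail in Proposition~\ref{prop:tail}.
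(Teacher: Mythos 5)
Your overall architecture matches the paper's: isolate the smallest singular value, control it via Proposition~\ref{prop:tail}, and control the rest by transferring the local law from a larger scale via the monotonicity of $\eta\mapsto\eta\,\Im\Tr G(\ii\eta)$. Your layer-cake computation for $A$ is a perfectly good substitute for the paper's triadic decomposition and reproduces all three terms of the stated bounds, including the observation that the exponential factor is only needed for the second moment.

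The genuine gap is in the bulk term $B$. The rigidity statement $\lambda_2^z\gtrsim\delta^{3/2}$ ``with very high probability'' does \emph{not} follow from the local law at scale $\eta^*\sim\delta^{3/2}$: the averaged local law~\eqref{average} has error $O_\prec(1/(n\eta^*))$, which is exactly the size of the contribution $\frac{1}{2n}\cdot\frac{\eta^*}{\lambda^2+\eta^{*2}}\sim\frac{1}{n\eta^*}$ of a single eigenvalue sitting in the gap, so it can never exclude $O(1)$ (or even $n^{\xi/2}$) eigenvalues below $\eta^*$. Worse, the event $\{\lambda_2^z<\delta^{3/2}\}$ is genuinely not an $n^{-D}$-probability event: its probability is governed by the factor $e^{-n\delta^2/2}$, and since $\delta\gg n^{-1/2}$ only forces $n\delta^2\to\infty$ arbitrarily slowly, this can be far larger than any $n^{-D}$; moreover $\delta^{3/2}$ lies outside the range $y\le C/(n\delta^2)$, i.e.\ $\lambda_1^z\le C/(n\delta^{1/2})$, where Proposition~\ref{prop:tail} is valid, so you cannot control the bad event by the tail bound either. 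Consequently the step ``bad event has probability $\le n^{-D}$ and $B\le1/\eta$ there'' collapses. The repair is essentially what the paper does: take the intermediate scale to be $\widetilde\eta:=C/(n\delta^{1/2})$ (the top of the tail bound's range), note that the local law at scale $\widetilde\eta$ does give $\#\{j:\lambda_j^z\le\widetilde\eta\}\prec1$ (since $2n\widetilde\eta\,\Im\<G^z(\ii\widetilde\eta)\>\prec n\widetilde\eta^{\,2}/\delta+1\prec1$), and then treat \emph{all} the $\prec1$ eigenvalues below $\widetilde\eta$ the same way you treated $\lambda_1^z$ — their contribution is controlled dyadically by $\P^{\mathrm{Gin}}(\lambda_1^z\le u)$ since $\lambda_j^z\le u$ implies $\lambda_1^z\le u$ — while the monotonicity transfer is applied only to the eigenvalues above $\widetilde\eta$, where no rigidity below $\widetilde\eta$ is needed.
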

\begin{remark}
	The exponential factor in (\ref{tail}) for the smallest eigenvalue $\lambda_1^z$
	does not manifest in the first moment estimate on the resolvent since the main contribution 
	comes from larger eigenvalues. For the second moment, however, the lowest
	eigenvalue plays the leading role.
\end{remark}
The proof of Lemma \ref{lemma00} will be given in Section~\ref{sec34}.	
Therefore from Lemma \ref{lemma00}, for any $z \in \mathrm{supp}(f^{-}_1) \cup \mathrm{supp}(f^{+}_2)$ and $\eta\leq n^{-3/4-\epsilon}$, we have
\begin{align}\label{two_moment}
	\E^{\mathrm{Gin}} \Big[ \Im \<G^{z}(\ii \eta)\> \Big] \lesssim n^{\xi}\sqrt{n}\eta, \qquad \E^{\mathrm{Gin}} \Big[\big( \Im \<G^{z}(\ii \eta)\> \big)^2 \Big] \lesssim n^{\xi}\big(n\eta^2+n^{-3/4}\big).
\end{align}
Thus we can prove that $I_{0}^{\eta_0}(f)$ is negligible in the second moment sense:
\begin{lemma}\label{lemma_small}
	With $\eta_0=n^{-7/8-\tau}$ and $f=f^{-}_1$ or $f^{+}_2$, we have
	\begin{align}\label{last_goal}
		\E^{\mathrm{Gin}}|I_{0}^{\eta_0}(f)|^2=O_{\prec}(n^{-\tau/2}).
	\end{align}
\end{lemma}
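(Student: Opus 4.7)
The plan is to estimate $\E^{\mathrm{Gin}}|I_0^{\eta_0}(f)|^2$ by combining Cauchy--Schwarz in the $(z,\eta)$ variables with the second-moment bound on $\Im\<G^z(\ii \eta)\>$ already established in Lemma~\ref{lemma00} (equivalently, in its simplified form~\eqref{two_moment}). Writing $\Im \Tr G^z(\ii\eta) = 2n\, \Im\<G^z(\ii\eta)\>$ and exploiting positivity of $\Im\<G^z\>$, I apply Cauchy--Schwarz with respect to the positive finite measure $\dd\mu(z,\eta) := |\Delta f(z)|\,\dd\eta\, \dd^2 z$ on $\mathrm{supp}(f)\times [0,\eta_0]$, whose total mass is $\|\Delta f\|_1\,\eta_0$. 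This yields
\begin{equation*}
|I_0^{\eta_0}(f)|^2 \le C n^2 \|\Delta f\|_1\, \eta_0 \int_\C |\Delta f(z)| \int_0^{\eta_0} \big(\Im\<G^z(\ii\eta)\>\big)^2\, \dd\eta\, \dd^2z.
\end{equation*}

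Next I take the expectation inside the deterministic integrals and insert~\eqref{two_moment}. Before doing so I verify the hypotheses: for $z\in\mathrm{supp}(f^{-}_1)\cup\mathrm{supp}(f^{+}_2)$ one has $\Re z - 1 \gtrsim \sqrt{\gamma_n/n}$, hence $\delta := |z|^2-1$ satisfies $\sqrt{(\log n)/n}\lesssim \delta \lesssim n^{-1/2+\tau}$, so both $n^{-1/2}\ll\delta\ll 1$ and $\eta_0 = n^{-7/8-\tau}\ll 1/(n\delta^{1/2})$ hold. Moreover $\eta_0 \le n^{-3/4-\epsilon}$ for any sufficiently small $\epsilon>0$, so the bound $\E^{\mathrm{Gin}}[(\Im\<G^z(\ii\eta)\>)^2]\lesssim n^\xi(n\eta^2+n^{-3/4})$ from~\eqref{two_moment} applies uniformly on the support. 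Integrating in $\eta$ gives
\begin{equation*}
\int_0^{\eta_0}(n\eta^2+n^{-3/4})\dd\eta \lesssim n\eta_0^3+n^{-3/4}\eta_0\lesssim n^{-3/4}\eta_0,
\end{equation*}
uniformly in $z$, and integrating $|\Delta f(z)|$ produces an additional factor $\|\Delta f\|_1$.

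Combining these bounds with $\|\Delta f\|_1^2\lesssim n^{1/2+\tau}\log n$ from~\eqref{deltaf} and $\eta_0 = n^{-7/8-\tau}$, I obtain
\begin{equation*}
\E^{\mathrm{Gin}}|I_0^{\eta_0}(f)|^2 \lesssim n^{2+\xi}\|\Delta f\|_1^2\, \eta_0^2\, n^{-3/4}
\lesssim n^{2+\xi}\cdot n^{1/2+\tau}\log n \cdot n^{-7/4-2\tau}\cdot n^{-3/4}
= n^{-\tau+\xi}\log n,
\end{equation*}
which is $O_\prec(n^{-\tau/2})$ upon choosing $\xi<\tau/2$. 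No further concentration argument is needed.

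The conceptual heart of the estimate, rather than the routine bookkeeping above, is the interplay of three competing scales: the anisotropy-driven blow-up $\|\Delta f\|_1\sim n^{1/4+\tau/2}$, the truncation scale $\eta_0$, and the Gaussian suppression $e^{-n\delta^2/2}\sim n^{-1/4}$ hidden inside Lemma~\ref{lemma00} which produces the crucial $\eta$-independent $n^{-3/4}$ term in the second-moment bound. Without this exponential gain from Proposition~\ref{prop:tail}, squaring the $\|\Delta f\|_1$ factor would overwhelm any gain from a polynomial cutoff in $\eta$, and indeed the exponent calculation above is tight: it is exactly the $n^{-3/4}$ contribution (not the $n\eta^2$ one) that fixes the admissible threshold $\eta_0 = n^{-7/8-\tau}$. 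Once Lemma~\ref{lemma00} is in hand, however, the argument reduces to Cauchy--Schwarz plus arithmetic, so I do not anticipate any technical obstacle beyond the verification already sketched.
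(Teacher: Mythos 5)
Your argument is correct and leads to the same (in fact slightly better, $O_\prec(n^{-\tau})$) bound, but the way you handle the $\eta$-integral differs from the paper. The paper does not integrate the second-moment bound of Lemma~\ref{lemma00} over all scales; instead it first excises the regime $\eta\in[0,n^{-l}]$ using an external smallest-singular-value bound, and then uses the monotonicity of $\eta\mapsto\eta\,\Im\Tr G^z(\ii\eta)$ to dominate $\int_{n^{-l}}^{\eta_0}\Im\Tr G^z(\ii\eta)\,\dd\eta$ by $C(\log n)\,n\eta_0\,\Im\<G^z(\ii\eta_0)\>$, so that the moment bound \eqref{two_moment} is only ever invoked at the single scale $\eta=\eta_0$. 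You instead apply Cauchy--Schwarz in $(z,\eta)$ and then use \eqref{two_moment} pointwise for every $\eta\in(0,\eta_0]$, integrating afterwards. What your route buys is that you avoid both the $[0,n^{-l}]$ excision and the logarithmic factor from $\int\eta_0\eta^{-1}\dd\eta$; what it costs is that you must take the statement of Lemma~\ref{lemma00} literally as holding uniformly for all $0<\eta\le C/(n\delta^{1/2})$ with no lower cutoff (which is indeed how it is stated, and which is legitimate since the $\eta$-independent term $n^{-2/3}\delta^{-1/3}e^{-n\delta^2/2}\sim n^{-3/4}$ controls the contribution of an anomalously small $\lambda_1^z$ at every scale), whereas the paper's monotonicity route only needs the bound at $\eta_0$ itself. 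Your verification of the hypotheses ($\delta\gtrsim\sqrt{\log n/n}$ on the supports, $\eta_0\ll 1/(n\delta^{1/2})$) and the exponent bookkeeping are both correct, so I see no gap; this is a mild streamlining of the paper's argument rather than a different method, with the same key input (the Ginibre tail bound of Proposition~\ref{prop:tail} feeding the $n^{-3/4}$ term) doing all the work.
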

The proof of Lemma \ref{lemma_small} is given
in Section~\ref{sec34}.
Combining Lemma \ref{lemma_small} with Lemma \ref{Ginibre_estimate} and (\ref{I_s+I_b}), we have
proved~\eqref{some_Gini_1} and~\eqref{some_Gini_2}.

Next, we will use the Green function comparison to extend these Ginibre estimates to generic i.i.d. matrices satisfying Assumption \ref{ass:mainass}.

\subsection{Green function comparison for $I_{0}^{\eta_0}(f)$}
In this part, we will show that $I_{0}^{\eta_0}(f)$ is negligible in the first absolute moment for generic i.i.d. matrices. 
\begin{lemma}\label{lemma0}
	With $\eta_0=n^{-7/8-\tau}$, for any $f=f^{-}_1$ or $f^{+}_2$, we have
	\begin{align}\label{need}
		\E|I_{0}^{\eta_0}(f)|=\E\Big|\frac{n}{4\pi}\int_{\C} \Delta f(z) \int_{0}^{\eta_0}\Im\< G^{z}(\ii \eta)\> \dd \eta \dd^2 z \Big| = \OO(n^{-\tau}).
	\end{align}
\end{lemma}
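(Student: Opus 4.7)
The plan is to pass the absolute value inside both integrals, then combine a Green function comparison with the Ginibre estimate from Lemma~\ref{lemma00} to control the expectation of $\Im\langle G^z(\ii\eta)\rangle$ uniformly in the small-$\eta$ regime, and finally integrate using the $L^1$ bound on $\Delta f$.

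First, since $\eta>0$ implies $\Im\langle G^z(\ii\eta)\rangle\ge 0$, the triangle inequality and Fubini give
\begin{equation*}
\E|I_0^{\eta_0}(f)|\;\le\;\frac{n}{4\pi}\int_\C |\Delta f(z)|\int_0^{\eta_0}\E\bigl[\Im\langle G^z(\ii\eta)\rangle\bigr]\,\dd\eta\,\dd^2 z.
\end{equation*}
Next, I would invoke the small-$\eta$ GFT (Proposition~\ref{prop1}, whose role is precisely to transfer an expectation bound on $\Im\langle G^z(\ii\eta)\rangle$ from the Ginibre ensemble to a general i.i.d.\ ensemble up to negligible comparison error) to replace $\E[\Im\langle G^z(\ii\eta)\rangle]$ by its Ginibre counterpart $\E^{\mathrm{Gin}}[\Im\langle G^z(\ii\eta)\rangle]$, at which point Lemma~\ref{lemma00} applies.

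On the support of either $f_1^-$ or $f_2^+$, the real part of $z$ is at least $L-l_n=1+\sqrt{\gamma_n/(4n)}(1-o(1))$ and $|\Im z|\le h_n=n^{-1/4+\tau/2}$, so
\begin{equation*}
\delta:=|z|^2-1\;\sim\;\sqrt{\gamma_n/n}\;\sim\;\sqrt{(\log n)/n}.
\end{equation*}
In particular the hypothesis $\eta\le C/(n\delta^{1/2})$ of Lemma~\ref{lemma00} is satisfied for all $\eta\le\eta_0=n^{-7/8-\tau}$, since $n\delta^{1/2}\sim n^{3/4}(\log n)^{1/4}$. The Ginibre bound from Lemma~\ref{lemma00} then yields
\begin{equation*}
\E^{\mathrm{Gin}}\bigl[\Im\langle G^z(\ii\eta)\rangle\bigr]\;\lesssim\;\frac{\eta}{\delta}+n^{1+\xi}\eta\delta\;\lesssim\;n^{1/2+\xi}\eta\sqrt{\log n},
\end{equation*}
so $\int_0^{\eta_0}\E^{\mathrm{Gin}}[\Im\langle G^z(\ii\eta)\rangle]\dd\eta\lesssim n^{1/2+\xi}\eta_0^2\sqrt{\log n}=n^{-5/4-2\tau+\xi}\sqrt{\log n}$, uniformly in $z\in\mathrm{supp}(f)$.

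Finally, combining with $\|\Delta f\|_1\lesssim n^{1/4+\tau/2}\sqrt{\log n}$ from~\eqref{deltaf} and the prefactor $n$ gives
\begin{equation*}
\E|I_0^{\eta_0}(f)|\;\lesssim\;n\cdot n^{1/4+\tau/2}\sqrt{\log n}\cdot n^{-5/4-2\tau+\xi}\sqrt{\log n}\;=\;n^{-3\tau/2+\xi}\log n,
\end{equation*}
which is $O(n^{-\tau})$ once $\xi$ is chosen small enough (say $\xi<\tau/3$). The only nontrivial ingredient here is the invocation of Proposition~\ref{prop1}: in the regime $\eta\ll n^{-3/4}$ the standard local law is useless and the comparison must be driven by the lower-tail bound on $\lambda_1^z$ from Proposition~\ref{prop:tail} (for the Ginibre side) together with a cumulant-expansion argument tailored to $\Im\langle G^z\rangle$ on the iid side. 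That GFT is the genuine difficulty; once it is in hand, Lemma~\ref{lemma0} reduces to the arithmetic check above, which is dictated by the precise choice $\eta_0=n^{-7/8-\tau}$ balancing the $\eta$--integral against the anisotropy factor $\|\Delta f\|_1\sim n^{1/4+\tau/2}$.
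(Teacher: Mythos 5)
Your overall power counting is the right one, and the final arithmetic matches what the paper obtains. But there is a genuine gap at the heart of the argument: you invoke Proposition~\ref{prop1} to control $\E[\Im\langle G^z(\ii\eta)\rangle]$ \emph{for every} $\eta\in(0,\eta_0]$, whereas that proposition is only stated (and can only hold) for $n^{-1+\epsilon}\le\eta\le n^{-3/4-\epsilon}$. Its comparison error $\frac{1}{n^{5/2}\eta^2}+\frac{1}{n^5\eta^5}+n^{-1}$ diverges as $\eta\to0$, and no GFT of this kind can reach $\eta=0$: one has $\Im\langle G^z(\ii 0^+)\rangle=+\infty$ on the event that $X-z$ is nearly singular, so controlling the expectation for very small $\eta$ requires a quantitative lower bound on the smallest singular value of $X-z$ for the \emph{i.i.d.} ensemble, which is an extra input, not a consequence of the Ginibre bound plus comparison. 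Relatedly, you silently drop the GFT error terms when you write $\E[\Im\langle G^z(\ii\eta)\rangle]\lesssim n^{1/2+\xi}\eta\sqrt{\log n}$; in the admissible range these errors can in fact be integrated and absorbed, but the regime $\eta\lesssim n^{-1}$ cannot be reached this way at all.

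The paper closes exactly this gap by two devices you are missing. First, the regime $\eta\in[0,n^{-l}]$ is discarded using a smallest-singular-value (anti-concentration) bound for i.i.d.\ matrices, namely $\P(\Spec(H^z)\cap[-n^{-l},n^{-l}]\neq\emptyset)\le C_l n^{-l/2}$ from \cite[Theorem 3.2]{MR2684367}. Second, instead of applying the comparison at every scale, the monotonicity of $\eta\mapsto\eta\,\Im\Tr G^z(\ii\eta)$ is used to bound the whole remaining integral by the single admissible scale $\eta_0$:
\begin{equation*}
\int_{n^{-l}}^{\eta_0}\Im\Tr G^{z}(\ii\eta)\,\dd\eta\le\int_{n^{-l}}^{\eta_0}\frac{\eta_0}{\eta}\,\Im\Tr G^{z}(\ii\eta_0)\,\dd\eta\lesssim(\log n)\,\eta_0\,\Im\Tr G^{z}(\ii\eta_0),
\end{equation*}
after which Proposition~\ref{prop1} is applied only at $\eta=\eta_0=n^{-7/8-\tau}$, which lies squarely inside its range. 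This costs only a factor $\log n$ and reproduces your final estimate. A secondary, minor point: on $\mathrm{supp}(f_2^+)$ one has $\delta$ up to order $n^{-1/2+\tau}$ rather than $\sqrt{(\log n)/n}$, so the term $n^{1+\xi}\eta\delta$ carries an extra $n^{\tau}$; this only degrades the exponent (still $o(1)$), but your claim $\delta\sim\sqrt{\gamma_n/n}$ is not uniform over the support of $f$.
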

We will postpone the proof of  Lemma~\ref{lemma0} to Section~\ref{sec34}.  The proof of Lemma \ref{lemma0} crucially 
relies on the following improved estimate of the resolvent at the intermediate level $\eta=\eta_0$ 
using the monotonicity of $\Im \Tr G(\ii \eta)$.
\begin{proposition}\label{prop1}
	Let $X$ be an i.i.d. complex matrix satisfying Assumption~\ref{ass:mainass}. 
	For any small $\epsilon>0$, then for any $n^{-1+\epsilon}\leq\eta \leq n^{-3/4-\epsilon}$ 
	and $-Cn^{-1/2} \lesssim |z|-1 \leq n^{-1/2+\tau}$, we have
	\begin{align}\label{im_estimate}
		\E\big[ \Im \<G^{z}(\ii \eta)\> \big] \prec n^{1/2}\eta+\frac{1}{n^{5/2}\eta^2}+\frac{1}{n^5\eta^5}+n^{-1}.
	\end{align}
\end{proposition}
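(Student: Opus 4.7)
The strategy is a Green function comparison (GFT) along the Ornstein--Uhlenbeck (OU) matrix flow, interpolating between the given i.i.d.\ matrix $X$ and a complex Ginibre matrix $\widetilde X$, combined with the monotonicity identity
\[
\eta\,\Im\Tr G^z(\ii\eta)=\sum_{j}\frac{\eta^2}{(\lambda^z_j)^2+\eta^2},
\]
which is increasing in $\eta$. Monotonicity lets us reduce bounds at very small $\eta$ to bounds at a convenient intermediate scale $\eta_\star\sim n^{-3/4-\epsilon}$, where the local law (Theorem~\ref{local_thm}) and the Ginibre estimate of Lemma~\ref{lemma00} already give $\E^{\mathrm{Gin}}\bigl[\Im\<G^z(\ii\eta_\star)\>\bigr]\lesssim n^{1/2+\xi}\eta_\star$ throughout the support of $f_1^-$ and $f_2^+$, where $\delta=|z|^2-1\sim\sqrt{\log n/n}$.

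The heart of the argument is to control the time derivative of $F(t):=\E\bigl[\Im\<G^{z}_t(\ii\eta)\>\bigr]$ along the OU flow $X_t=e^{-t/2}X_0+\sqrt{1-e^{-t}}\widetilde X$. A cumulant expansion for $F'(t)$ shows that the first- and second-order contributions are exactly cancelled by the choice of flow, so that
\[
F'(t)=\sum_{k\ge 3}\frac{1}{k!}\sum_{a,b}c_{k+1}^{(t)}(x_{ab})\,\E\bigl[\partial_{x_{ab}}^{k}\Im\<G_t^{z}(\ii\eta)\>\bigr]+(\text{real-case type terms, absent here}),
\]
where $c_{k+1}^{(t)}(x_{ab})=\OO(n^{-(k+1)/2})$. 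Each derivative $\partial_{x_{ab}}$ produces one more resolvent factor, and the resulting monomials in Green function entries must be estimated using the averaged and isotropic local laws~\eqref{isotropic}--\eqref{average} and the deterministic bound $\|G^z(\ii\eta)\|\le\eta^{-1}$.

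The main obstacle, and the point where the naive GFT is insufficient, is that in our atypical regime ($|z|$ close to $1$, $\eta$ below the typical eigenvalue spacing $n^{-3/4}$) the a priori control $1/(n\eta)$ from the local law is only marginal and cannot absorb the factor $\eta^{-k}$ coming from $k$ derivatives. To overcome this we iterate the cumulant expansion on the un-matched indices of each $E_k$, gaining a factor $\psi=1/(n\eta)$ per iteration (following the scheme developed in \cite{2102.04330,2108.02728}). A finite, explicit number of iterations balances the loss in $\eta$ against the gain in $1/(n\eta)$ and produces precisely the two correction terms $\frac{1}{n^{5/2}\eta^{2}}$ (from the third-cumulant term after one iteration, carrying a $n^{-1/2}$ prefactor from the cumulant size and $1/(n\eta)^{2}$ from two un-matched indices) and $\frac{1}{n^{5}\eta^{5}}$ (from five-fold iteration, tracking the worst-case dependence when $\eta$ approaches $n^{-1}$). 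The $n^{-1/2}\eta$ main term is precisely the deterministic prediction $\Im m^z(\ii\eta)$ at $\delta\sim n^{-1/2}$ in~\eqref{m_1} and is built in through the Ginibre endpoint of the flow.

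Finally we assemble the pieces: integrating $|F'(t)|\,\dd t$ from $0$ to $\infty$ gives $|F(0)-F(\infty)|$ bounded by the two correction terms above, $F(\infty)\lesssim n^{1/2+\xi}\eta$ by Lemma~\ref{lemma00}, and in the range $\eta$ very close to $n^{-1}$ we use monotonicity together with the trivial bound $\Im\<G^z(\ii\eta')\>\le 1$ at some $\eta'=\OO(1)$ to absorb anything left over into the $n^{-1}$ term of~\eqref{im_estimate}. The restriction $\eta\ge n^{-1+\epsilon}$ ensures that all intermediate iterations remain in the regime where the local law~\eqref{average} is applicable.
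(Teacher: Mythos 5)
Your core argument is the paper's own: the bound is obtained as (Ginibre input) $+$ (Green function comparison), where the Ginibre value $\E^{\mathrm{Gin}}[\Im\<G^z(\ii\eta)\>]\lesssim n^{1/2+\xi}\eta$ comes from the explicit lower-tail bound (Lemma~\ref{lemma00}), and the transfer to i.i.d.\ matrices is done by a cumulant expansion of $\frac{\dd}{\dd t}\E[\<G^z_t(\ii\eta)\>]$ along the OU flow with iteration on un-matched indices; this is exactly Proposition~\ref{prop2} and Lemma~\ref{lemma_goal}, whose error $n^{-1/2}\Psi^2+\Psi^5+n^{-1}$ reproduces the three correction terms after integrating in $t$. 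Two caveats. First, the monotonicity reductions you invoke at the beginning and end are not only unnecessary but would fail if actually relied upon: bounding $\Im\<G^z(\ii\eta)\>\le(\eta_\star/\eta)\,\Im\<G^z(\ii\eta_\star)\>$ and inserting the estimate at $\eta_\star\sim n^{-3/4-\epsilon}$ yields $n^{1/2}\eta_\star^2/\eta$, which for $\eta\ll\eta_\star$ is far larger than the claimed $n^{1/2}\eta$ (e.g.\ $n^{-1/8}$ versus $n^{-3/8}$ at $\eta=n^{-7/8}$); similarly, monotonicity down from $\eta'=O(1)$ only gives $\Im\<G^z(\ii\eta)\>\lesssim\eta'/\eta$, which is useless near $\eta\sim n^{-1+\epsilon}$. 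The comparison must be (and, in the body of your argument, is) run at the given $\eta$ itself, with the whole range $[n^{-1+\epsilon},n^{-3/4-\epsilon}]$ covered uniformly since $\Psi^5=(n\eta)^{-5}\le n^{-5\epsilon}$ there. Second, your attribution of the error terms is slightly off relative to the actual mechanism: $n^{-1/2}\Psi^2$ arises from the third-order cumulant terms with an index coincidence among the three summation indices, while $\Psi^5$ is already the naive local-law bound on the fourth-order cumulant terms; the iteration on un-matched indices (Proposition~\ref{lemma3}) is needed to push the fully off-diagonal third-order terms all the way down to $O_\prec(n^{-1})$, not to generate these powers. Neither point changes the validity of the overall scheme.
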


Proposition \ref{prop1} is a direct result of the Ginibre estimate in Lemma \ref{lemma00} and the following 
Green function comparison which will be proved in Section \ref{sec:prop}.
\begin{proposition}\label{prop2}  For any small $\epsilon>0$, for any $n^{-1+\epsilon}\leq\eta 
	\leq n^{-3/4-\epsilon}$ and $-Cn^{-1/2} \lesssim |z|-1 \leq n^{-1/2+\tau}$, we have
	\begin{align}\label{difference}
		\big|	\E[ \<G^{z}(\ii \eta)]-	\E^{\mathrm{Gin}}[ \<G^{z}(\ii \eta)]  \big] 
		\prec\frac{1}{n^{5/2}\eta^2}+\frac{1}{n^5\eta^5}+n^{-1}.
	\end{align}
\end{proposition}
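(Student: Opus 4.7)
The proof proceeds via a Green function comparison along the Ornstein--Uhlenbeck matrix flow
\[
    \dd X_t = -\tfrac{1}{2} X_t\, \dd t + \dd \mathfrak{B}_t,
\]
where $\mathfrak{B}_t$ is a complex Ginibre Brownian motion; the flow is designed so that $X_t$ has the same first and second moments as $X_0$, and the distribution of $X_T$ at $T\sim \log n$ agrees with the Ginibre ensemble up to an error of order $n^{-D}$ for arbitrary $D>0$. The plan is to control the total change of $\phi(t)\deq\E\<G^z_t(\ii\eta)\>$ on $[0,T]$, where $G^z_t$ is the Hermitized resolvent \eqref{initial} built from $X_t$, and then invoke the Ginibre bound in Lemma~\ref{lemma00}.

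First I would compute $\phi'(t)$ by applying the standard cumulant expansion formula $\E[\chi f(\chi)] = \sum_{k\ge 1}\frac{\kappa_k}{(k-1)!}\E[f^{(k-1)}(\chi)]$ to each matrix entry $\sqrt{n}\,x_{ab}$. Assumption~\ref{ass:mainass} guarantees that the first and second cumulants agree with the Ginibre ones, so the corresponding terms cancel exactly against the OU drift, leaving
\[
    \phi'(t) = \sum_{k\ge 3}\frac{1}{k!}\sum_{a,b}\kappa_k(t)\,\E\bigl[\partial_{x_{ab}(t)}^{k}\<G^z_t(\ii\eta)\>\bigr]+ O_\prec(n^{-D}),
\]
with $\kappa_k(t)=O(e^{-kt/2}n^{-k/2})$. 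Differentiating $\<G\>$ $k$ times produces sums of products of $k+1$ matrix elements of $G^z_t$ indexed by $a,b$; these are the objects to be estimated.

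The core step is the iterative cumulant expansion in the spirit of~\cite{2102.04330,2108.02728}: starting from the a priori input provided by the local law in Theorem~\ref{local_thm}, each additional iteration either produces a deterministic leading term or gains a factor $\psi\deq(n\eta)^{-1}$ through the combinatorics of \emph{unmatched indices}. In the present regime $\eta\le n^{-3/4-\epsilon}$ the parameter $\psi$ is no longer small, so a single application of the expansion is insufficient; the iteration has to be pushed far enough to reach genuine smallness. A careful tracking of the three natural obstructions produces exactly the three error sizes in \eqref{difference}: the term $n^{-5/2}\eta^{-2}$ comes from the third-cumulant contributions after the required number of iterations, $n^{-5}\eta^{-5}$ arises from the analogous treatment of the fourth cumulant, and the background term $n^{-1}$ absorbs both lower-order deterministic remainders and the crude estimates for cumulants of order $k\ge 5$, which use the moment bounds~\eqref{eq:hmb} together with the isotropic local law~\eqref{isotropic}. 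Integrating $\phi'(t)$ over $t\in[0,T]$ and exploiting the exponential decay of $\kappa_k(t)$ in $t$ then yields the stated bound on $|\phi(0)-\phi(T)|=|\E[\<G^z(\ii\eta)\>]-\E^{\mathrm{Gin}}[\<G^z(\ii\eta)\>]|$.

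The main obstacle is that we are forced to work below the spectral resolution of the local law: the naive per-step gain $\psi\gg 1$ makes the expansion diverge if carried out carelessly, and one has to set up the iteration so that the algebraic structure of the $2\times 2$ block form of $H^z$ and of $M^z(\ii\eta)$ from \eqref{Mmatrix}--\eqref{m_1m_2} still yields effective cancellations among the unmatched-index contributions. In particular the specific threshold $\eta_0=n^{-7/8-\tau}$ chosen in Section~\ref{sec:proof_main_theorem} is tied to how far the iteration can be pushed before the error terms overcome the $n^{1/4}$ loss from $\int|\Delta f|$ in Girko's formula, which is the reason the bound is formulated precisely as \eqref{difference} rather than in a cleaner form.
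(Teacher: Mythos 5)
Your proposal follows essentially the same route as the paper: interpolate along the OU flow \eqref{flow}, cancel the first two cumulant orders against the drift, and control the remaining terms by an iterative cumulant expansion organized around unmatched indices (the paper's Proposition~\ref{lemma3} and Lemma~\ref{lemma:expand_mechanism}), then integrate in $t$. Two small corrections to your accounting. First, in this regime $\Psi=(n\eta)^{-1}$ satisfies $n^{-1/4+\epsilon}\le\Psi\le n^{-\epsilon}$, so it is \emph{not} the case that $\psi\gg 1$; the difficulty is only that the per-step gain can be as weak as $n^{-\epsilon}$, which is why the paper iterates $D=\lfloor 2/\epsilon\rfloor$ times until $\Psi^{D}\lesssim n^{-3/2}$. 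Second, the three error terms arise somewhat differently from what you describe: $n^{-5}\eta^{-5}=\Psi^5$ is just the naive power-counting bound \eqref{naive_g} on the fourth- and higher-order cumulant terms (no iteration needed there); $n^{-5/2}\eta^{-2}=n^{-1/2}\Psi^2$ comes from the third-order terms in which exactly two of the three summation indices coincide, again estimated directly from the local law; and the iterative unmatched-index machinery is used only for the fully off-diagonal third-order terms, turning their naive size $\sqrt{n}\Psi^4$ into the $O_\prec(n^{-1})$ contribution absorbed in the last error term. These are attribution issues, not gaps, and the overall argument is sound.
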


Therefore, combining Lemma \ref{lemma0} with (\ref{I_s+I_b}), for any $f=f^{-}_1$ or $f^{+}_2$, we 
proved~\eqref{mainterm} in the effective form
\begin{align}
	\label{eq:finb}
	\E \Big[\sum_{i=1}^n f(\sigma_i)\Big]=& \E[I_{\eta_0}^{T}(f)]+O(n^{-\tau}),\nonumber\\
	\E\Bigg| \sum_{i=1}^n f(\sigma_i)-\E \Big[\sum_{i=1}^n f(\sigma_i)\Big]\Bigg|=&\E\big| I_{\eta_0}^{T}(f) -\E[I_{\eta_0}^{T}(f)]\big|+O(n^{-\tau}).
\end{align}
Hence to prove the target estimates in (\ref{main_part_1}) and (\ref{main_part_2}), it suffices to study $I_{\eta_0}^{T}(f)$.

\subsection{Green function comparison for $I_{\eta_0}^{T}(f)$}\label{sec33}
In this part, we will extend the estimates of $I_{\eta_0}^{T}(f)$ in (\ref{some_Gini_1}) and (\ref{some_Gini_2}) from
Gaussian matrices to generic i.i.d. matrices. It then suffices to establish the following Green function
comparison for $I_{\eta_0}^{T}(f)$, whose proof will be presented in Section~\ref{sec:GFTGFT}.
\begin{proposition}\label{gft}
	For any $f=f^{-}_1$ or $f^{+}_2$, there exist some constants $c_1,c_2>0$ such that
	\begin{align}\label{gft_1}
		\Big| \E[I_{\eta_0}^{T}(f)]-\E^{\mathrm{Gin}}[I_{\eta_0}^{T}(f)] \Big|=O(n^{-c_1}),
	\end{align}
	and 
	\begin{align}\label{gft_2}
		\Big|	\E \big(I_{\eta_0}^{T}(f)-\E[I_{\eta_0}^{T}(f)]\big)^2-	
		\E^{\mathrm{Gin}} \big(I_{\eta_0}^{T}(f)-\E^{\mathrm{Gin}}[I_{\eta_0}^{T}(f)]\big)^2 \Big|=O(n^{-c_2}).
	\end{align}
\end{proposition}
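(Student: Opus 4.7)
The plan is a Green function comparison along an Ornstein--Uhlenbeck matrix flow $X_t$ satisfying $\dd X_t = -\frac{1}{2}X_t\dd t + \frac{1}{\sqrt n}\dd B_t$, where $B_t$ is a complex matrix Brownian motion preserving the complex symmetry class of Assumption \ref{ass:mainass}. Then $X_0=X$ and $X_t$ converges in distribution to a complex Ginibre matrix as $t\to\infty$. Writing $H_t^z$ and $G_t^z$ for the Hermitization and its resolvent along the flow, the strategy is to estimate $\frac{\dd}{\dd t}\E[F(X_t)]$ for $F$ being either $I_{\eta_0}^T(f)$ itself or $\bigl(I_{\eta_0}^T(f)-\E I_{\eta_0}^T(f)\bigr)^2$, integrate from $t=0$ to a time of order $\log n$, and show that the total change is $\OO(n^{-c})$ for some $c>0$.

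For \eqref{gft_1}, the functional $I_{\eta_0}^T(f)=-\frac{1}{4\pi}\int_\C\Delta f(z)\int_{\eta_0}^T\Im\Tr G^z(\ii\eta)\dd\eta\dd^2 z$ is linear in $\Tr G^z$. The standard cumulant expansion for $\frac{\dd}{\dd t}\E[I_{\eta_0}^T(f;X_t)]$ then produces a sum over entries $x_{ab}$ of cumulant-weighted derivatives $\kappa_{p,q}\E[\partial_{ab}^p\bar\partial_{ab}^q\Im\Tr G^z]$. The OU drift is designed so that first- and second-order terms match the Gaussian computation exactly and hence cancel; the remaining third- and higher-order terms reduce, via $\partial_{ab}G^z=-G^z(\partial_{ab}H^z)G^z$, to sums of products of resolvent entries estimated by Theorem \ref{local_thm} and the Ward identity $\sum_b|G^z_{ab}|^2=\Im G^z_{aa}/\eta$, gaining one factor $1/(n\eta)$ per unmatched index.

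The decisive input for controlling these terms after the $z$-integration against $\Delta f$ is Proposition \ref{prop1}, which in the atypical regime $\eta\le n^{-3/4-\epsilon}$ gives $\E|\Im\langle G^z(\ii\eta)\rangle|\prec n^{1/2}\eta$ plus negligible terms, much smaller than the trivial $O(1)$ bound; for $\eta\ge n^{-3/4}$ the averaged local law itself is sharp. Combined with iterated integration by parts in $z$ against $\Delta f$, which exploits the explicit $z$-dependence of the deterministic resolvent $M^z$ to produce additional cancellations of leading deterministic terms, these estimates compensate the unusually large prefactor $\|\Delta f\|_1\lesssim n^{1/4+\tau/2}\sqrt{\log n}$ caused by the anisotropic support of $f$. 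For the variance bound \eqref{gft_2} the same strategy applies to $\Phi(X)=\bigl(I_{\eta_0}^T(f;X)-\E I_{\eta_0}^T(f;X)\bigr)^2$, which is \emph{quadratic} in the same linear functional and therefore contains at most two copies of $\|\Delta f\|_1$ per expansion term. This is precisely why the argument is restricted to expectation and variance rather than a general smooth $F(I_{\eta_0}^T(f))$, whose Taylor expansion would produce arbitrarily high powers of the problematic $n^{1/4}$ factor.

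The main obstacle is that the anisotropy of $\operatorname{supp}f$ makes $\|\Delta f\|_1$ of order $n^{1/4}$ rather than $\OO(1)$ as in typical GFT arguments, while each unmatched-index gain is only $1/(n\eta)\le n^{-1/8+\tau}$ at the threshold $\eta=\eta_0=n^{-7/8-\tau}$. The cumulant expansion must therefore be pushed to sufficiently high order, and at each order the leading deterministic terms have to be identified and shown to cancel after integration against $\Delta f(z)\dd^2z$. I expect the hardest step to be a careful bookkeeping of the third- and fourth-cumulant contributions, where one must exhibit explicit cancellations involving $M^z$; without such cancellations the raw bound would be only $\OO(n^{1/4-1/8+\tau})=\OO(n^{1/8+\tau})$, which is insufficient. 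The choice $\eta_0=n^{-7/8-\tau}$ is dictated exactly by the balance between the $n^{1/4}$ loss from $\|\Delta f\|_1$, the per-index gain $1/(n\eta)$, and the range in which the improved estimate of Proposition \ref{prop1} is effective.
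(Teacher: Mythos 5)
Your proposal follows essentially the same route as the paper: an OU interpolation, cumulant expansion of the time derivative of the first and second moments of $I_{\eta_0}^{T}(f)$, local laws plus gains from unmatched indices, the improved bound of Proposition~\ref{prop1} at $\eta=\eta_0$, and cancellation of the surviving deterministic third-order terms after integration against $\Delta f$ using the harmonicity of $z$ and $z^2$. The only refinements worth noting are that the $\eta$-integral collapses to boundary terms at $\eta_0$ via $(G^z)^2=-\ii\,\dd G^z/\dd\eta$, and that a single $1/(n\eta_0)$ gain per unmatched index is not enough --- the paper iterates a self-consistent expansion on unmatched terms until they reach $\OO_\prec(n^{-3/2})$, which is exactly the ``pushing to sufficiently high order'' you anticipate.
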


Combining the Ginibre estimates~\eqref{some_Gini_1}--\eqref{some_Gini_2}
with the GFT estimates~\eqref{gft_1}--\eqref{gft_2}, we obtain the second line of~\eqref{genvar}
$$\E\big( I_{\eta_0}^{T}(f^{-}_1) -\E[I_{\eta_0}^{T}(f^{-}_1)]\big)^2=o(1)\big(\E[I_{\eta_0}^{T}(f^{-}_1)]
+O(n^{-c_1})\big)^2+O(n^{-c_2})=o(1) \big(\E[I_{\eta_0}^{T}(f^{-}_1)]\big)^2.
$$
The first line of~\eqref{genvar} is obtained similarly.

\subsection{Proof of some lemmas}\label{sec34}
We now give
the proofs of Lemmas~\ref{Ginibre_estimate},~\ref{lemma00},~\ref{lemma_small} and~\ref{lemma0}. 

\begin{proof}[Proof of Lemma \ref{Ginibre_estimate}]
	We first recall that the expectation and variance of the linear statistics of a general test function \(f\) can be expressed as 
	\begin{equation}\label{E Var}
		\begin{split}
			\E^{\mathrm{Gin}} \sum_i f(\sigma_i) &= \int f(z)\wt K_n(z,z)\dif^2 z\\
			\V^{\mathrm{Gin}}\sum_i f(\sigma_i)&=\int f(z)^2\wt K_n(z,z)\dif^2 z - \iint f(z) f(w) \abs[\big]{\wt K_n(z,w)}^2 \dif^2z \dif^2 w
		\end{split}
	\end{equation}
	in terms of the kernel \(\wt K_n(z,w)\) from~\cite{2206.04443}. Using the kernel asymptotics from~\cite[Lemma 6]{2206.04443} we obtain 
	\begin{equation}\label{asymp expected num evals}
		\begin{split}
			\int_{L-t/\sqrt{4\gamma n}}^{L+t/\sqrt{4\gamma n}}\int_{-s/(\gamma n)^{1/4}}^{s/(\gamma n)^{1/4}}\wt K_n(z,z)\dif \Im z\dif \Re z=2\erf(s)\sinh(t)\Bigl(1+\OO\big({\frac{\log\log n+t^2 + s^4}{\log n}}\big)\Bigr)
		\end{split}
	\end{equation}
	for \(\abs{t}+s^2\le \sqrt{\log n}/10\), 
	while for any \(t>0\) we have the bounds
	\begin{equation}\label{bound expected num evals}
		\begin{split}
			\int_{L-t/\sqrt{4\gamma n}}^{L+t/\sqrt{4\gamma n}}\biggl(\int_{-\infty}^{\sqrt{2t}/(\gamma n)^{1/4}}+\int_{\sqrt{2t}/(\gamma n)^{1/4}}^{\infty}\biggr)\wt K_n(z,z)\dif \Im z\dif \Re z&\lesssim e^{-t/4}\\
			\int_{L+t/\sqrt{4\gamma n}}^{\infty}\int_{\R}\wt K_n(z,z)\dif \Im z\dif \Re z&\lesssim e^{-t/4}.
		\end{split}
	\end{equation}
	From~\eqref{E Var}--\eqref{bound expected num evals} we immediately conclude
	\begin{equation}
		\E^{\mathrm{Gin}}\sum_i f_1^-(\sigma_i) \gtrsim \sinh\Bigl(\frac{4C_n}{5}\Bigr)
	\end{equation}
	and
	\begin{equation}
		\begin{split}
			\V^{\mathrm{Gin}}\sum_i f_1^-(\sigma_i)&\le \int f(z)^2\wt K_n(z,z)\dif^2 z \lesssim \sinh\Bigl(\frac{6C_n}{5}\Bigr)
		\end{split}
	\end{equation}
	recalling that \( C_n\ll\sqrt{\log n}\), proving~\eqref{main_part_2_ginibre}. Finally, the first bound in~\eqref{main_part_1_ginibre} follows directly from~\eqref{bound expected num evals}.
\end{proof}

\begin{proof}[Proof of Lemma \ref{lemma00}]
	We reformulate~\eqref{tail} as follows: for any $\eta\leq \wt \eta:= C/(n \delta^{1/2})$, 
	\begin{align}\label{better_exp}
		\P^{\mathrm{Gin}} \left( \lambda_1^z \leq \eta \right)\lesssim \frac{n^{4/3}\eta^2}{\delta^{1/3}} e^{-n\delta^2(1+\OO(\delta))/2}.
	\end{align}
	
	Using spectral decomposition of $H^{z}$ and the spectrum symmetry, we write
	\begin{align}\label{step_m1}
		\E^{\mathrm{Gin}}[\Im \<G^z&(\ii \eta)\>] = \frac{1}{n}  \sum_{k=0}^{K_0} \E^{\mathrm{Gin}} 
		\Big[ \sum_{3^k \eta \leq \lambda_i   < 3^{k+1} \eta} \frac{\eta}{(\lambda^z_{i})^2+\eta^2}  \Big] 
		+\frac{1}{n} \E^{\mathrm{Gin}} \Big[  \sum_{\lambda^z_i   \geq  \wt \eta }  \frac{\eta}{(\lambda^z_{i})^2+\eta^2}  \Big]\nonumber\\	
		\lesssim &   \frac{n^{\xi}}{n} \sum_{k=0}^{K_0} \frac{1}{3^{2k} \eta+\eta} 
		\P^{\mathrm{Gin}}\big( \lambda^z_1 \leq 3^{k+1} \eta \big)+\frac{1}{n} \E^{\mathrm{Gin}} 
		\Big[  \sum_{\lambda^z_i   \geq  \wt \eta }  \frac{\eta}{(\lambda^z_{i})^2+\eta^2}  \Big],
	\end{align}
	with $K_0=\lceil \log_3(\wt \eta/\eta)\rceil=O(\log n)$, where we used the rigidity of eigenvalues and $\xi>0$ is 
	an arbitrary small number. The second term in (\ref{step_m1}) can be bounded effectively 
	using the local law in (\ref{average}) and (\ref{m_1}), \ie
	\begin{align}\label{second_term}
		\frac{1}{n} \E^{\mathrm{Gin}} \Big[  \sum_{\lambda^z_i   \geq  \wt \eta }  \frac{\eta}{(\lambda^z_{i})^2+\eta^2}  \Big]
		=\frac{\eta}{\wt \eta}\E^{\mathrm{Gin}} \Big[\frac{\wt \eta}{n} \sum_{\lambda_j   \geq  \wt \eta} \frac{1}{\lambda^2_{j}
			+\wt \eta^2}\Big] \lesssim \frac{\eta}{\wt \eta} \E^{\mathrm{Gin}}[\Im \<G^z(\ii \wt \eta)\>] 
		\lesssim \frac{\eta}{\delta}+n^{1+\xi} \eta \delta,
	\end{align}
	where $\xi>0$ is an arbitary small number. 	
	Using (\ref{better_exp}), the first term in (\ref{step_m1}) can be bounded by
	$$
	\frac{n^{\xi}}{n} \sum_{k=0}^{K_0} \frac{1}{3^{2k} \eta+\eta}  \P^{\mathrm{Gin}}\big( \lambda^z_1 \leq 3^{k+1} \eta \big) 
	\lesssim \frac{n^{\xi}}{n} \sum_{k=0}^{K_0} \frac{1}{(3^{2k}+1)\eta} \frac{3^{2k+2} n^{4/3} \eta^2}{\delta^{1/3}}
	\lesssim n^{\xi} \log n\frac{n^{1/3}\eta}{\delta^{1/3}}.
	$$
	Note that we did not use the exponential factor in (\ref{better_exp}) yet since this bound is 
	already much smaller compared to the second term estimate in (\ref{second_term}) for $\delta \gg n^{-1/2}$.  
	Therefore, 
	we finished the proof of the first moment estimate in (\ref{gini}).
	
	Similarly, for the second moments, we have
	\begin{align}
		\E^{\mathrm{Gin}}[(\Im \<G^z&(\ii \eta)\>)^2] =\frac{1}{n^2}  \E^{\mathrm{Gin}} 
		\Big[ \Big( \sum_{k=0}^{K_0}\sum_{3^k \eta \leq \lambda_i^z  < 3^{k+1} \eta}
		\frac{\eta}{(\lambda^z_{i})^2+\eta^2} +\sum_{\lambda_i^z   \geq  \wt \eta }  
		\frac{\eta}{(\lambda^z_{i})^2+\eta^2} \Big)^2 \Big]\nonumber\\	
		\lesssim & \log n \frac{n^{\xi}}{n^2} \sum_{k=0}^{K_0} \frac{1}{(3^{2k}+1)^2 \eta^2}  
		\P^{\mathrm{Gin}}\big( \lambda^z_1 \leq 3^{k+1} \eta \big)+ \frac{\eta^2}{n^2} 
		\E^{\mathrm{Gin}} \Big[\Big(\sum_{i} \frac{1}{(\lambda^z_{i})^2+\wt \eta^2}\Big)^2\Big]\nonumber\\
		\lesssim & \log n \frac{n^{\xi}}{n^{2/3} \delta^{1/3}} e^{-n\delta^2(1+\OO(\delta))/2}+
		\frac{\eta^2}{\delta^2}+n^{2+\xi} \eta^2 \delta^2,
	\end{align}
	with $\xi>0$ being any arbitary small number, where we used the tail bound in (\ref{better_exp}) 
	and the local law in (\ref{average}). This finishes the proof of Lemma \ref{lemma00}.
\end{proof}
Using Lemma \ref{lemma00}, we will prove Lemma~\ref{lemma_small} and \ref{lemma0}. 
Since the proof of these two lemmas is similar, we present only the detailed proof of Lemma~\ref{lemma0}.

\begin{proof}[Proof of Lemmas~\ref{lemma_small} and \ref{lemma0}]
	We start with showing that the regime $\eta\in [0,n^{-l}]$, 
	for some very large $l>0$ is negligible, exactly  as it was done in \cite{MR3770875}.
By a direct computation,
$$\int_0^{n^{-l}} \Im \Tr G(\ii \eta) \dd \eta 
=\frac{1}{2} \Big( \sum_{|\lambda_i| \lesssim n^{-l}} +\sum_{|\lambda_i| \gtrsim n^{-l}} \Big) \log \Big( 1+\frac{n^{-2l}}{\lambda_i^2}\Big).$$
The second sum can easily be estimated  using~Lemma~\ref{lemma00} or Proposition~\ref{prop1}, \ie
\begin{align}
	\E\Big[\sum_{|\lambda_i| \gtrsim n^{-l}} \log \Big( 1+\frac{n^{-2l}}{\lambda_i^2}\Big) \Big]
	\lesssim &  (\log n) \E|\{i: |\lambda_i| \leq \eta_1 \}| +\frac{n^{1-2l}}{\eta^2_1} \lesssim n^{-1/4-100\tau}+n^{-2l+3}, \nonumber
\end{align}
where we chose $\eta_1:=n^{-7/8-100\tau}$. To estimate the first sum, we recall \cite[Proposition 5.7]{MR3770875}, \ie under the density condition~(\ref{assumption_b}), there exists $C_\alpha>0$ such that
\begin{align}\label{density_bound}
\mathbb{P}\left( \min_{i=-n}^n |\lambda^z_i| \leq \frac{u}{n} \right)\le C_\alpha u^{\frac{2\alpha}{1+\alpha}} n^{\beta+1},\qquad z \in \C, u>0,
\end{align}
with $\alpha, \beta$ given in (\ref{assumption_b}). 
Then following \cite{MR3770875}[Eq. (5.34)-(5.35)], we have
$$\E\Big[\sum_{|\lambda_i| \lesssim n^{-l}} \log \Big( 1+\frac{n^{-2l}}{\lambda_i^2}\Big)\Big]  \lesssim n \E \big[ |\log \lambda^z_1| \one_{\lambda_1 \lesssim n^{-l}} \big] 
\lesssim n^{-10},$$
with $l$ large enough depending on $\alpha,\beta$. Combining this bound with the $L^1$-norm of $\Delta f$ in (\ref{deltaf}), we 
conclude that the very tiny regime $\eta \in [0,n^{-l}]$ is negligible. 

Hence, it is enough to estimate the contribution to $I_0^{\eta_0}(f)$ \nc of the remaining $\eta$-integral over $[n^{-l},\eta_0]$. 
	Using that $\eta \rightarrow \eta \Im \Tr G^{z}(\ii \eta)$ is increasing in $\eta\ge 0$, we have
	\begin{align}\label{middle_step}
		\Big|\int_{\C} \Delta f(z) \E \big[ \int_{n^{-l}}^{\eta_0}\Im\Tr G^{z}(\ii \eta) \dd &\eta\big] \dd^2 z \Big| 
		\leq \int_{\C} |\Delta f(z)|  \int_{n^{-l}}^{\eta_0} \frac{\eta_0}{\eta}  \E\big[\Im \Tr G^{z}(\ii \eta_0)\big] \dd \eta  \dd^2 z\nonumber\\
		\leq & C (\log n) n \eta_0   \int_{\C} |\Delta f(z)|   \E\big[\Im \< G^{z}(\ii \eta_0)\>\big] \dd^2 z. 
	\end{align}
	Using the estimate in (\ref{im_estimate}) with $\eta_0=n^{-7/8-\tau}$ \ie
	\begin{align}\label{complex_middle}
		\E[\Im \<\gz(\ii \eta_0)\>] \prec  n^{1/2}  \eta_0 +n^{-5/8+5\tau}\lesssim n^{-3/8-\tau},
	\end{align} 
	together with the $L^1$ norm of $\Delta f$ in (\ref{deltaf}), by (\ref{middle_step}) we obtain that
	\begin{align}\label{complex_middle2}
		\Big|\int_{\C} \Delta f(z) \E \big[ \int_{n^{-l}}^{\eta_0}\Im\Tr G^{z}(\ii \eta) \dd \eta\big] \dd^2 z \Big| =\OO( n^{-\tau}).
	\end{align}
	We hence finish the proof of Lemma \ref{lemma0}. Lemma~\ref{lemma_small} can be proven similarly using the second estimate in (\ref{two_moment}).
\end{proof}

\section{Green function comparison for resolvents: Proof of Proposition~\ref{prop2}}
\label{sec:prop}

Before starting with the proof of Proposition~\ref{prop2}  we introduce some notations which we will use throughout this section.

\begin{notation}
	\label{not:index} 
	We use lower case
	letters to denote the indices taking values in $\llbracket 1, n \rrbracket$ and upper case letters to denote
	the indices taking values in $\llbracket n+1, 2n \rrbracket$. We also use calligraphic letters 
	$\mathfrak{u}, \mathfrak{v}$ to denote the indices ranging fully from $1$ to $2n$.
	
	For any index $\mathfrak{v} \in \llbracket 1, 2n \rrbracket$, the conjugate of $\mathfrak{v}$, denoted 
	by $\mathrm{conj}(\mathfrak{v})$, is given by $\mathrm{conj}(\mathfrak{v}) \in \llbracket 1, 2n \rrbracket$ 
	and it is such that $|\mathrm{conj}(\mathfrak{v})-\mathfrak{v}|=n$. In particular, for an index $ a \in \llbracket 1, n \rrbracket$, 
	we define its index conjugate $\mathrm{conj}(a)=\bar a:=a+n$,
	and  for an index $ B \in \llbracket n+1, 2n \rrbracket$ we define its index conjugate $\mathrm{conj}(B)=\ud B:=B-n$. 
	With a slight abuse of terminology,  we say two indices {\it coincide} if either they are equal or one is equal to 
	the conjugate of the other one. For instance, we say $a\in \llbracket 1, n \rrbracket$ coincides with the 
	index $B\in \llbracket n+1, 2n \rrbracket$ if $a=\ud{B}$ (or equivalently $B=\bar a$). 
	We also say that a collection of indices are {\it distinct} if there is no index coincidence among them (in the sense explained above). 
	
	Moreover, we often use generic   letters  $x$ and $y$ to denote the (first) row and the (second) column index of a Green function entry. 
	In this context the lower case letters $x,y$ do not indicate that they take values in $\llbracket 1, n \rrbracket$. 
	We say that a Green function entry $G_{xy}$ is {\it diagonal} if $x=y$ or $x=\mathrm{conj}(y)$; 
	otherwise we say that $G_{xy}$ is off-diagonal. 
	
	\bigskip

	We now explain this terminology with an example:
	\begin{align}\label{ex_average}
		\frac{1}{n^2} \sum_{a=1}^{n} \sum_{B=n+1}^{2n} G_{aB} G_{B \bar a}=	
		\frac{1}{n^2} \sum_{a=1}^{n} \sum_{B\neq \bar{a}} G_{aB} G_{B \bar a} +\frac{1}{n^2} \sum_{a=1}^{n} G_{a \bar{a}} G_{\bar a \bar a},
	\end{align}
	where we split the summation into two parts: 1) the two summation indices $a$ and $B$ are distinct; 
	2) there is an index coincidence $B=\bar{a}$ (or equivalently $a = \ud{B}$) in the summation. 
	In the first term the two Green function entries are off-diagonal, while in the second term the two Green function entries are diagonal. 
	
\end{notation}

We prove Proposition \ref{prop2} via a continuous interpolating flow. Given the initial ensemble $H^{z}$ 
in (\ref{initial}), we consider the matrix flow
\begin{align}\label{flow}
	\dd H^z_t=-\frac{1}{2} (H^z_t+Z)\dd t+\frac{1}{\sqrt{n}} \dd \mathcal{B}_t, \quad Z=\begin{pmatrix}
		0  &  zI  \\
		\overline{z}I   & 0
	\end{pmatrix}, \quad 
	\mathcal{B}_t=\begin{pmatrix}
		0  &  B_t  \\
		B^*_t   & 0
	\end{pmatrix}
\end{align}
where $B_t$ is an $n \times n$ matrix with independent standard complex valued Brownian motion entries.
The matrix flow $H_t^z$ interpolates between the initial matrix $H^{z}$ in (\ref{initial})
and an independent matrix as in (\ref{initial}) with $X$ being replaced with an independent complex Ginibre ensemble. 

The Green function of the time dependent matrix $H^{z}_t$ is denoted by $G^{z}_t$.  
Since the flow in (\ref{flow}) is stochastically H\"{o}lder continuous in time, the local law for the 
Green function in Theorem \ref{local_thm} also holds true for the time dependent Green function
$G^{z}_t$ simultaneously for all $t \ge 0$ by a grid argument, together with the H\"older 
regularity of $G_t^z$ for $0\le t\le n^{100}$ and a simple perturbation argument for $t\ge n^{100}$.
More precisely for $n^{-1+\epsilon} \leq \eta\leq n^{-3/4-\epsilon}$ and $-Cn^{-1/2} \lesssim |z|-1 \leq n^{-1/2+\tau}$, it holds uniformly
\begin{align}\label{localg}
	\sup_{t\ge 0}\max_{1\leq \mathfrak{v},\mathfrak{u} \leq 2n} \Big\{\big| \big(G^{z}_t(\ii \eta) \big)_{\mathfrak{uv}}-m^{z} \delta_{\mathfrak{v}=\mathfrak{u}} -\mathfrak{m}^{z} \delta_{|\mathfrak{v}-\mathfrak{u}|=n}\big| \Big\} \prec\Psi:=\frac{1}{n\eta},
\end{align}
where $m^{z}$, $\mathfrak{m}^{z}$ are given in (\ref{m_1m_2}) and (\ref{m_1}), and they are such that 
\begin{align}\label{localm}
	m^{z} \equiv m^{z}(\ii \eta)= \OO(\Psi), \qquad \mathfrak{m}^{z} \equiv \mathfrak{m}^{z}(\ii \eta) =\OO(1).
\end{align}
Note that in our range of parameters, the small deterministic term $m^{z} \delta_{\mathfrak{v}=\mathfrak{u}}$ may be included
in the error in~\eqref{localg}.
We remark that the diagonal Green function entry $G_{\mathfrak{u}\mathfrak{u}}$ as well as the off-diagonal
Green function entries are bounded by $\Psi$ with very high probability, while the other kind of diagonal 
Green function entry $G_{\mathfrak{u}, \mathrm{conj}(\mathfrak{u})}$ can only be bounded by $1$ with very high probability.

In the following, we often drop the dependence on $t$ and $\eta$ and set $G^{z} \equiv G^{z}_{t}(\ii \eta)$
for notational simiplicity. Without specfic mentioning, all the estimates in this section hold true 
uniformly for any $-Cn^{-1/2} \lesssim |z|-1 \leq n^{-1/2+\tau}$, $n^{-1+\epsilon} \leq \eta\leq n^{-3/4-\epsilon}$ and $t\ge 0$.

\begin{proof}[Proof of Proposition \ref{prop2}]
	
	In order to prove Proposition \ref{prop2}, it suffices to show the following estimate on the time derivative of $\<G_t^z(\ii\eta)\>$:
	\begin{lemma}\label{lemma_goal}
		For any $ n^{-1+\epsilon} \leq \eta \leq n^{-3/4-\epsilon}$, $-Cn^{-1/2} \lesssim |z|-1 \leq n^{-1/2+\tau}$ and $t\ge 0$, it holds
		\begin{align}\label{goal_1}
			\left|\frac{\dd}{\dd t} \E[\< G^{z}_t(\ii \eta)\>] \right|=\OO_\prec (n^{-1/2}\Psi^2+\Psi^{5}+n^{-1}).
		\end{align}
	\end{lemma}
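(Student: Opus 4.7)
The plan is to differentiate $\E\langle G_t^z(\ii\eta)\rangle$ in $t$ along the flow~\eqref{flow} using a combination of It\^o's formula (for the Brownian part) and a cumulant expansion (to handle the higher cumulants of the non-Gaussian initial data). The drift $-\tfrac12(H_t^z+Z)$ is designed precisely so that the second-cumulant contribution of $\chi$, which matches the complex Ginibre covariance by Assumption~\ref{ass:mainass}, is exactly compensated. Accordingly, only cumulants of $\chi$ of order $p+q+1\ge 3$ remain, leading to a schematic expansion
\[
\frac{\dd}{\dd t}\E\langle G_t^z\rangle = \sum_{p+q\ge 2}\frac{\kappa_{p+1,q}(\chi)}{p!\,q!\,n^{(p+q+1)/2}}\sum_{a,b}\E\bigl[\partial_{x_{ab}}^{p}\partial_{\bar x_{ab}}^{q}R_{ab}\bigr],
\]
where $R_{ab}$ arises from the single derivative $\partial_{x_{ab}}\langle G\rangle$ as a product of two Green entries with indices in $\{a,\bar a,b,\bar b\}$, and each further derivative appends one more Green factor with indices drawn from the same set.

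I would then estimate the contributions of the third, fourth, and $\ge$ fifth cumulants separately. For the third cumulant ($p+q=2$: four Green factors in total, two free summation indices $a,b$, prefactor $n^{-3/2}$), the entrywise local law~\eqref{localg}, combined with a careful classification of each Green factor as \emph{diagonal} (bounded by $\mathfrak{m}^z+\OO(\Psi)=\OO(1)$) or \emph{off-diagonal} (bounded by $\Psi$) in the sense of Notation~\ref{not:index}, together with the Ward identity $\sum_{\mathfrak u}|G_{\mathfrak u\mathfrak v}|^2=\eta^{-1}\Im G_{\mathfrak v\mathfrak v}$ applied whenever helpful, produces a bound of order $n^{-1/2}\Psi^2$. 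For the fourth cumulant ($p+q=3$) the direct entrywise estimate yields only $\Psi^4$, which is insufficient; here I would apply the iterative cumulant-expansion scheme developed in~\cite{2102.04330,2108.02728}, in which an additional resolvent expansion is inserted to convert a matched index into an un-matched one (in the sense of Definition~\ref{def:unmatch_shift}), gaining an extra factor of $\Psi$ per step and reaching the target $\Psi^5$ after one round of iteration. For $p+q\ge 4$ the naive bound $\Psi^{p+q+1}\le \Psi^5$ is already within the claim, and very high cumulant orders are truncated at $p+q\le\log n$ via the moment bound~\eqref{eq:hmb}.

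The hardest step will be the fourth-cumulant analysis: in our atypical regime $\eta\le n^{-3/4-\epsilon}$ the smallness parameter $\Psi=1/(n\eta)$ is only as small as $n^{-1/8}$, so the per-iteration gain is weak and the combinatorics of the resulting trees of index contractions must be tracked carefully to ensure that only a controllable number of monomials contribute at each iteration level. A secondary point is uniformity in $t$: since~\eqref{localg} and the covariance structure of $H_t^z$ under~\eqref{flow} hold uniformly for $t\ge 0$, and the OU flow equilibrates at rate $e^{-t/2}$, the effective time horizon in the subsequent proof of Proposition~\ref{prop2} is $t\in[0,C\log n]$, introducing only a harmless logarithmic factor after $\dd t$ integration. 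The residual $n^{-1}$ term in~\eqref{goal_1} originates from fully matched "diagonal" contributions that cannot be improved by further iteration.
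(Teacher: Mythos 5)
Your overall framework (It\^o plus cumulant expansion along the OU flow, exact cancellation of the second-order terms, truncation of high cumulants via~\eqref{eq:hmb}, and the $n^{-1}$ error coming from index coincidences) matches the paper. However, you have placed the hard work at the wrong cumulant order, and this creates a genuine gap at the crux of the lemma.

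The problematic step is your treatment of the third-cumulant terms. After the expansion, a third-order term such as $n^{-5/2}\sum_{v,a,B}\E[G_{va}G_{BB}G_{aa}G_{Bv}]$ carries three free summation indices and four Green-function factors, so its naive size is $n^{1/2}\Psi^{4}$ (each factor, including the diagonal ones $G_{aa},G_{BB}$, is $\OO_\prec(\Psi)$ since $m^z(\ii\eta)=\OO(\Psi)$ in this regime). At the relevant scale $\eta\sim n^{-7/8}$ one has $\Psi\sim n^{-1/8}$, so $n^{1/2}\Psi^{4}=\OO(1)$, whereas the target contribution is $n^{-1/2}\Psi^{2}\sim n^{-3/4}$. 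The Ward identity cannot close this gap: applying $\sum_{a}|G_{va}|^2=\eta^{-1}\Im G_{vv}$ reproduces exactly the same $n^{1/2}\Psi^4$ bound, because each off-diagonal factor is already being estimated at its typical size $\Psi$. The $n^{-1/2}\Psi^2$ term in~\eqref{goal_1} actually arises only from the \emph{index-coincidence} part of the third-order sum (e.g.\ $a=\ud B$), not from the generic part. For the generic part (all indices distinct) the paper must invoke the full iterative unmatched-index machinery: after shifting $G\to\wh G$ these terms are unmatched in the sense of Definition~\ref{def:unmatch_shift}, and Proposition~\ref{lemma3} (itself proved by repeated self-consistent expansions, not by one round) bounds them by $\OO_\prec(n^{-3/2})$, which after the explicit $\sqrt{n}$ prefactor gives $\OO_\prec(n^{-1})$. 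Your proposal omits this entirely for the third cumulant, so the dominant contribution is not controlled.

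Conversely, your fourth-cumulant analysis is unnecessarily pessimistic: with your own counting the fourth-order term has $p+q+2=5$ Green factors and prefactor $n^{-(p+q+1)/2}=n^{-2}$ against two free indices $a,B$ (the trace index is absorbed by the $1/n$ normalization of $\<\cdot\>$), so the naive entrywise bound is already $\Psi^{5}+n^{-1}$ --- exactly the claimed error, with no iteration needed; your stated bound $\Psi^4$ is a miscount. In short: the iteration you propose for the fourth cumulant is superfluous, and the place where it is indispensable --- the distinct-index part of the third cumulant --- is handled by an argument that does not work.
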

	
	Integrating (\ref{goal_1}) over $t \in [0, t_0]$ with $t_0=100\log n$ we obtain 
	\begin{align}\label{0tot_0}
		\big|\E[\<G_0^{z}(\ii \eta)\>]-\E[\<G^{z}_{t_0}(\ii \eta)\>] \big| =\OO_{\prec}\big(\log n(n^{-1/2}\Psi^2+\Psi^{5}+n^{-1})\big). 
	\end{align}
	Note that $H_t^{z}$ in (\ref{flow}) is given as in (\ref{initial}) with $X$ being replaced with the time dependent matrix
	$$X_t \stackrel{{\rm d}}{=} e^{-\frac{t}{2}} X+\sqrt{1-e^{-t}} \mathrm{Gin} (\mathbb{C}), 
	\qquad t\ge 0,$$
	where $X_{\infty} \stackrel{{\rm d}}{=}  \mathrm{Gin} (\mathbb{C})$ is the complex 
	Ginibre ensemble which is independent of $X$. Then we have
	\begin{align}\label{t_0toinf}
		\|G^{z}_{t_0}(\ii \eta)-G^{z}_{\infty}(\ii \eta)\| \leq \|G^{z}_{t_0}\| \|G^{z}_{\infty}\| \|X_{t_0}-X_{\infty}\| 
		\prec n^{-48},
	\end{align}
	where we used that $\|G^z(\ii \eta)\| \leq \eta^{-1}\le n$ and that $|X_{ij}| \prec n^{-1/2}$ from  the assumption in \eqref{eq:hmb}. 
	Combining (\ref{0tot_0}) with (\ref{t_0toinf}) we conclude the proof of Proposition \ref{prop2}.
	
\end{proof}

We now present the proof of Lemma \ref{lemma_goal}.

\subsection{Proof of Lemma~\ref{lemma_goal}}
Recall the matrix flow in (\ref{flow}) with complex-valued $X$, we set 
\begin{align}\label{Wmatrix}
	W \equiv W_t=H^z_t+Z=\begin{pmatrix}
		0  &  X_t  \\
		X_t^*   & 0
	\end{pmatrix}.
\end{align}
Then $\dd H^z_t=-\frac{1}{2}W_t\dd t+\frac{1}{\sqrt{n}} \dd \mathcal{B}_t$. \
Applying Ito's formula and setting $\partial /\partial w_{aB}=\partial/\partial h_{aB}$, we have     
\begin{align}\label{sde_G}
	\dd \<G_t^z\>=&\sum_{a,B} \frac{\partial \<G_t^z\>}{\partial h_{aB}} \dd h_{aB}
	+\sum_{a,B} \frac{\partial \<G_t^z\>}{\partial \overline{h_{aB}}} \dd \overline{h_{aB}}\nonumber\\
	&+\frac{1}{2}\sum_{a,B} \frac{\partial^2 \<G_t^z\>}{\partial h_{aB} \partial \overline{h_{aB}}} \dd h_{aB} \dd \overline{h_{aB}}
	+\frac{1}{2}\sum_{a,B} \frac{\partial^2 \<G_t^z\>}{\partial  \overline{h_{aB}}\partial h_{aB} }
	\dd \overline{h_{aB}}\dd h_{aB}  \nonumber\\
	=&\left(-\frac{1}{2}\sum_{a,B} w_{aB}\frac{\partial \<G_t^z\>}{\partial w_{aB}}-
	\frac{1}{2}\sum_{a,B}\overline{w_{aB}} \frac{\partial \<G_t^z\>}{\partial \overline{w_{aB}}} +\frac{1}{n}\sum_{a,B}
	\frac{\partial^2 \<G_t^z\>}{\partial w_{aB} \partial \overline{w_{aB}}} \right) \dd t\nonumber\\
	&+\frac{1}{\sqrt{n}}\sum_{a,B} \frac{\partial \<G_t^z\>}{\partial w_{aB}}\dd (\mathcal{B}_t)_{a B}
	+\frac{1}{\sqrt{n}}\sum_{a,B} \frac{\partial \<G_t^z\>}{\partial \overline{w_{aB}}} \dd \overline{(\mathcal{B}_t)_{a B}}.
\end{align}
Note that the expectation of the martingale  term on the last line of (\ref{sde_G}) vanishes. 
It then suffices to study the expectation of the remaining terms. We note that 
\begin{align}\label{udG}
	\<\gz\> =\frac{1}{n} \sum_{v=1}^{n} \gz_{vv}=\frac{1}{n} \sum_{V=n+1}^{2n} \gz_{VV}
	=\frac{1}{2n} \sum_{\mathfrak{v}=1}^{2n} \gz_{\mathfrak{vv}},
\end{align}
which follows from the spectral symmetry induced by the $2 \times 2$ block matrix in (\ref{initial}).
Performing the cumulant expansion formula on $\{w_{aB}\}$ and $\{\overline{w_{aB}}\}$ on the right side of (\ref{sde_G}) 
(see e.g. \cite[Lemma 7.1]{MR3678478}), we observe the precise cancellations of the second order terms, 
and the summation below starts from the third order terms \ie $p+q+1=3$,
\begin{align}\label{ex_deri}
	\frac{\dd}{\dd t} \E[\<G^{z}_t\>]=	&\E\Big[\frac{\dd \<G_t^z\>}{\dd t}\Big]=-\frac{1}{2n} \sum_{v,a=1}^{n} \sum_{B=n+1}^{2n}
	\left( \sum_{p+q+1= 3}^{K_0}\frac{c_{aB}^{(p+1,q)}}{p!q!n^{\frac{p+q+1}{2}}} 
	\E \left[  \frac{\partial^{p+q+1} \gz_{vv}}{\partial w_{aB}^{p+1} \partial \overline{w_{aB}}^{q} } \right]\right)\nonumber\\
	&-\frac{1}{2n} \sum_{v,a=1}^{n} \sum_{B=n+1}^{2n} \left(\sum_{p+q+1= 3}^{K_0}\frac{c_{aB}^{(q,p+1)}}{p!q!n^{\frac{p+q+1}{2}}} 
	\E \left[  \frac{\partial^{p+q+1} \gz_{vv}}{\partial \overline{w_{aB}}^{p+1} \partial w_{aB}^{q}}\right]\right)
	+\OO_{\prec}(n^{-\frac{K_0}{2}+2}),
\end{align}
where $c^{(p,q)}_{aB}$ are the $(p,q)$-cumulants of the normalized complex-valued entries $\sqrt{n} w_{aB}$, with $c_{aB}^{(p,q)}=c_{Ba}^{(q,p)}$ from the complex symmetry, and we omit their dependence on $t$ for simplicity.   The last error stems from
truncating the cumulant expansions at a sufficiently large $K_0$-th order, say $K_0=100$, using the local law in
(\ref{localg}) and the finite moment condition in \eqref{eq:hmb}; see also \cite{MR4221653} for a similar truncation argument.

For simplicity we assume i.i.d. entries of $X$ in the our model, thus
the cumulants are independent of the indices,  $c^{(p,q)}_{aB}=c^{(p,q)}$. We next consider only the first line of (\ref{ex_deri}), \ie
\begin{align}\label{goal}
	\sum_{p+q+1=3}^{K_0}	L^{z}_{p+1,q}:=\sum_{p+q+1=3}^{K_0}\frac{c^{(p+1,q)}}{2 p!q!}\left(
	\frac{1}{n^{\frac{p+q+3}{2}}}  \sum_{v,a,B} 
	\E \left[   \frac{\partial^{p+q+1} \gz_{vv}}{\partial w_{aB}^{p+1} \partial \overline{w_{aB}}^{q} } \right]\right),
\end{align}
and the second line of (\ref{ex_deri}) is exactly the same as (\ref{goal}) by interchanging $a$ with $B$.

Using the following differentiation rules for any $1\leq \mathfrak{u},\mathfrak{v} \leq 2n$
\begin{align}\label{rule_1}
	\frac{\partial G^{z}_{\mathfrak{uv}}}{\partial w_{aB}}
	=-G^{z}_{\mathfrak{u}a}G^{z}_{B\mathfrak{v}}, \qquad \frac{\partial G^{z}_{\mathfrak{uv}}}{\partial \overline{w_{aB}}}
	=-G^{z}_{\mathfrak{u}B}G^{z}_{a\mathfrak{v}}, 
\end{align}
each term $L^z_{p+1,q}$ in (\ref{goal}) can be written as a linear combination of products of $p+q+2$ Green function entries of the form
\begin{align}\label{some_form}
	\frac{1}{n^{\frac{p+q+3}{2}}} \sum_{v=1}^{n} \sum_{a=1}^{n} \sum_{B=n+1}^{2n}  \E \Big[ \prod_{i=1}^{p+q+2} \gz_{x_iy_i}\Big].
\end{align}
Here $x_i, y_i$ denote generic row and column indices of $G^z$, respectively, to which we assign actual
summation indices $v,a,B$, depending on the precise structure of the corresponding term dictated by
(\ref{goal})--(\ref{rule_1}).
The assignment will be denoted by the symbol $\equiv $, \eg $x_i \equiv a$, $y_i \equiv B$ means that 
the generic factor $G_{x_iy_i}^z$ is replaced with the actual 
$G_{aB}^z$ in (\ref{some_form}). Note that both lower  and upper case summation indices
can be assigned to the  generic $x, y$ indices.  The  assignments  that appear from  (\ref{goal})--(\ref{rule_1}) have the
following properties: $x_1 \equiv v$, $y_{p+q+2}\equiv v$, and all the other indices $x_i,y_i \equiv $ either $a$ or $B$ such that
\begin{align}\label{pq_number}
	\#\{x_i \equiv a\}=\#\{y_i \equiv B\}=q, \qquad \#\{ x_i \equiv B\}=\#\{ y_i \equiv a\}=p+1.
\end{align}
From the local law in (\ref{localg}) and (\ref{localm}), we have $|\gz_{aa}|, |\gz_{BB}| , |\gz_{aB}|, |\gz_{Ba}| \prec \Psi$ 
unless $a=\ud{B}$. If we restrict to the summation when all the indices are distinct in (\ref{some_form}) 
(\ie $a \neq \ud{B}$, $v\neq a$, and $v\neq \ud B$), 
then the product of $p+q+2$ Green function entries in (\ref{some_form}) can be bounded by $\Psi^{p+q+2}$. 
For the remaining summation when there is some index coincidence (e.g. $a= \ud{B}$), we gain a factor $n^{-1}$ 
since the number of free summation indices is reduced by one. Therefore, we obtain the following so-called {\it naive estimate}  
\begin{align}\label{naive_g}
	|L^{z}_{p+1,q}| \prec n^{-\frac{p+q-3}{2}} \big(\Psi^{p+q+2}+n^{-1}\big).
\end{align}
Thus for any $p+q+1\geq 4$, we have
\begin{align}\label{five}
	|L_{p+1,q}^{z}| \prec \Psi^5+n^{-1}.
\end{align}
However the naive estimate in (\ref{naive_g}) for $p+q+1=3$ is not sufficiently fine to prove Lemma \ref{lemma_goal}. 

Next we focus on proving an improved estimate for these third order terms, \ie proving
\begin{align}\label{third}
	\sum_{p+q+1=3} |L^{z}_{p+1,q}| =O_{\prec} \big( n^{-1/2}\Psi^2+n^{-1}\big).
\end{align}
By direct computations, the third order terms $L^z_{p+1,q}$ with $p+q+1=3$ in (\ref{goal}) are linear combinations of the following terms
\begin{align}\label{some_term}
	&\frac{\sqrt{n}}{n^{3}}\sum_{v,a,B}\E[G^{z}_{v a} G^z_{BB} G^z_{aa}  G^z_{Bv}],
	\quad \frac{\sqrt{n}}{n^{3}}\sum_{v,a,B}\E[G^z_{{v} a} G^z_{Ba} G^z_{Ba} G^z_{Bv}], \nonumber\\
	&\frac{\sqrt{n}}{n^{3}}\sum_{v,a,B}\E[G^z_{v a} G^z_{BB} G^z_{aB}  G^z_{av}], 
	\quad \frac{\sqrt{n}}{n^{3}}\sum_{v,a,B}\E[G^z_{v B} G^z_{aB} G^z_{aa}  G^z_{B v}],
\end{align} 
as well as the other terms with the index $a$ and $B$ interchanged. 
As explained below (\ref{pq_number}), we split the threefold summations in (\ref{some_term}) into the following three cases
(recall  the concept of coinciding and distinct indices from Notation~\ref{not:index}):

\begin{enumerate}
	
	\item[1)] all  three summation indices coincide in the summation (\ie $v=a=\ud{B}$
	): 
	the resulting sum in (\ref{some_term}) can be bounded by $O_\prec(n^{-3/2})$ using that 
	$|G_{\mathfrak{u} \mathfrak{v}}| \prec 1$ from (\ref{localg}), which is small enough to prove (\ref{third}); 
	
	\item[2)] exactly  two of the summation indices coincide  (\eg $a=\ud{B} \neq v$ or $a = v \neq \ud B$): the resulting sum
	in (\ref{some_term}) can be bounded by $O_\prec(n^{-1/2}\Psi^2)$ using the local law in 
	(\ref{localg}) and (\ref{localm}), which is also sufficient to prove (\ref{third});
	
	\item[3)] all three summation indices are distinct (\ie $a \neq \ud{B} \neq v$): using the local law in (\ref{localg}) and (\ref{localm}) naively,
	the resulting term in (\ref{some_term}) can be bounded by $O_\prec(\sqrt{n}\Psi^4)$, which is however far from the truth.
	We observe from (\ref{some_term}) that these third order terms have indices $a$ and $B$ that 
	both appear three times as a first and as a second index of a $G$-factor. A somewhat more
	complicated version of this feature (see the concept of unmatched indices in Definition \ref{def:unmatch_shift} later)
	allows us to improve the bound on them.
\end{enumerate}

Next, we will discuss in details for the third order terms from (\ref{some_term}) in Case 3), \ie with the summation restriction
of all indices different, $a \neq \ud{B} \neq v$. We first introduce the shifted version of the Green function
\begin{align}\label{shifted}
	\wh \gz:=\gz-M^z=O_{\prec}(\Psi), \qquad M^{z}=\begin{pmatrix}
		m^{z} &  \mathfrak{m}^{z}  \\
		\overline{\mathfrak{m}^{z}}   & m^{z}
	\end{pmatrix},
\end{align}
with $m^{z}$ and $\mathfrak{m}^{z}$ given in (\ref{m_1m_2}). The shifted version $\wh \gz$ differs 
from $\gz$ only for the diagonal entries, \ie $G_{xy}=\wh{G_{xy}}$ unless $x=y$ or $x=\mathrm{conj}(y)$. 
Then the first term among the third order terms in (\ref{some_term}) with $a \neq \ud{B} \neq v$ can be
written as (omitting the factor $\sqrt{n}$)
\begin{align}\label{third_example}
	\frac{1}{n^{3}} \sum_{a \neq \ud{B} \neq v} \E[\gz_{v a}  \gz_{BB}&\gz_{aa} \gz_{B v}]=\frac{1}{n^{3}} 
	\sum_{a \neq \ud{B} \neq v} \E[\wh{\gz_{v a}} \wh{\gz_{BB}} \wh{\gz_{aa}}  \wh{\gz_{B v}}] 	+\frac{m^{z}}{n^{3}}  
	\sum_{a \neq \ud{B} \neq v} \E[\wh{\gz_{v a}} \wh{\gz_{BB}} \wh{\gz_{B v}}]\nonumber\\
	&+ \frac{ m^{z}}{n^{3}}  \sum_{a \neq \ud{B} \neq v} \E[\wh{\gz_{v a}} \wh{\gz_{aa}} \wh{\gz_{B v}}]
	+\frac{(m^{z})^2}{n^{3}}  \sum_{a \neq \ud{B} \neq v} \E[\wh{ \gz_{v a}}  \wh{\gz_{B v}} ].
\end{align}
Note that the terms on the right side above are averaged products of shifted Green function entries 
of the form defined in (\ref{form}) below. Moreover, these terms are unmatched since the index $a$ (or $B$) 
appears odd number times in the product of Green function entries which clearly does not satisfy the match 
condition in (\ref{match_condition}); see Defintion \ref{def:unmatch_shift} below. In fact, any term in (\ref{some_term}) 
with the restriction $a \neq \ud{B} \neq v$ can be written as a linear combination of unmatched terms of the form 
in (\ref{form}) as in (\ref{third_example}) with a factor $\sqrt{n}$. Using Proposition~\ref{lemma3} below in 
combination with additional contributions from Case 1) and 2) with the index coincidences, we have obtained
the improved estimate for the third order terms in (\ref{third}).

Combining (\ref{ex_deri}), (\ref{five}) and (\ref{third}), we finish the proof of Lemma \ref{lemma_goal}. \qed

\vline

Before giving the formal definition of unmatched indices (and unmatched terms) to study the third order terms in \eg (\ref{third_example}) from Case 3) systematically, we first set some notational conventions. 

For any fixed $l_1,l_2 \in \N$, we use $\mathcal{I}_{l_1,l_2}$ to denote a set of $l_1$ lower case
letters and $l_2$ upper case letters, \eg the set may contain lower case letters $a,v$ and upper case
letter $B$ as in (\ref{third_example}). In general, we may write 
$\mathcal{I}_{l_1,l_2}:=\{v_j\}_{j=1}^{l_1} \cup \{V_j\}_{j=1}^{l_2}$. 
Each element in $\mathcal{I}_{l_1,l_2}$ will represent a summation index and the font type of each letter indicates 
the range of the summation for that index; as before, the 
lower case letters $v_j$ run from 1 to $n$, and the upper case letters $V_j$ run from $n+1$ to $2n$.  
We denote the sum over these $l:=l_1+l_2$ summation indices (indicated by $\mathcal{I}_{l_1,l_2}$) by 
$$\sum_{\mathcal{I}_{l_1,l_2}}=\sum_{v_1,\cdots v_{l_1}, V_1,\cdots,V_{l_2}} :=\sum_{v_1=1}^{n} \cdots \sum_{v_{l_1}=1}^{n} \sum_{V_1=n+1}^{2n} \cdots \sum_{V_{l_2}=n+1}^{2n}.$$ 
We also introduce a partial summation restricted to distinct indices, 
\begin{align}\label{distinct_sum}
	\sum^*_{\mathcal{I}_{l_1,l_2}}:=\sum_{v_1,\cdots v_{l_1}, V_1,\cdots,V_{l_2}} \Big(\prod_{j\neq j'}^{l_1} \delta_{ v_j \neq v_{j'}} \Big)\Big(\prod^{l_2}_{j\neq j'} \delta_{ V_j \neq V_{j'}} \Big) \Big( \prod_{j=1}^{l_1} \prod_{j'=1}^{l_2} \delta_{ v_j \neq \ud{V_{j'}}}  \Big),
\end{align}
\ie
each summation index in $\mathcal{I}_{l_1,l_2}$ is different from all the other indices and their conjugates.

\begin{definition}
	\label{def:unmatch_form}
	Given $l_1, l_2\in \mathbb{N}$ and a collection of lower and upper case summation indices 
	$\mathcal{I}_{l_1,l_2}=\{v_j\}_{j=1}^{l_1} \cup \{V_j\}_{j=1}^{l_2}$,
	we consider a product of $d$ generic shifted Green function entries $ \wh{\gz_{x_1y_1}} \wh{\gz_{x_2y_2}} \cdots \wh{\gz_{x_dy_d}}$
	and assign a summation index $v_j$, $V_j$ or their conjugates $\overline{v_j}, \ud{V_j}$ to each generic index $x_i, y_i$
	(\eg $x_1\equiv v_2, y_1\equiv \ud{V_5}, x_2\equiv \overline{v_3}, y_2 \equiv V_5$, etc.). 
	A term of the form
	\begin{align}\label{form}
		\frac{1}{n^{l}} \sum^*_{\mathcal{I}_{l_1,l_2}} \wh{\gz_{x_1y_1}} \wh{\gz_{x_2y_2}} \cdots \wh{\gz_{x_dy_d}}= \frac{1}{n^{l}}  \sum^*_{\mathcal{I}_{l_1,l_2}}  \prod_{i=1}^{d}  \wh{\gz_{x_iy_i}}, \qquad l=l_1+l_2,
	\end{align}
	of degree $d$ with a concretely specified assignment is denoted by $P_d$.
	The collection of the terms of the form in (\ref{form}) with degree $d$ is denoted by $\mathcal{P}_{d}$. 
\end{definition}

Given a term $P_d \in \mathcal{P}_d$ in (\ref{form}), the local law in (\ref{localg}) yields a naive bound using power counting, \ie for any $P_d \in \mathcal{P}_d$,
\begin{align}\label{p_d_local}
	|P_d| \prec \Psi^d, \qquad n^{-1/4+\epsilon} \leq \Psi=(n\eta)^{-1}\leq n^{-\epsilon}.
\end{align}

We now give the formal definition of the (un)matched terms of the form in (\ref{form}). 

\begin{definition}[(Un)matched terms in $\mathcal{P}_d$]\label{def:unmatch_shift} 	
	Given a term $P_d \in \mathcal{P}_d$ in (\ref{form}), we say that a lower case
	index $v_j \in \mathcal{I}_{l_1,l_2}$ is {\it matched} if the number of assignments of $v_j$ and
	its conjugate $\overline{v_j}$ to a row index in the product agrees with their number of assignments to a column index, \ie 
	\begin{align}\label{match_condition}
		\#\{i : x_i\equiv {v_j}\}+\#\{i: x_i\equiv \overline{v_j}\}=\#\{i: y_i\equiv{v_j}\}+\#\{i: y_i\equiv\overline{v_j} \}.
	\end{align}
	Otherwise, we say that $v_j$ is an {\it unmatched} index. For instance, looking at the two terms,
	$$\frac{1}{n^2} \sum_{a,b}^* \wh{G_{ab}} \wh{G_{ab}} \wh{G_{aa}}  \wh{G_{BB}},\qquad \quad \frac{1}{n^2} \sum_{a,b}^* \wh{G_{ab}} \wh{G_{ba}} \wh{G_{aa}}  \wh{G_{bb}},$$
	both  $a$ and $b$ are unmatched indices in the first term, while they are matched indices in the second term.

	Similarly, we say that an upper case
	index $V_j \in \mathcal{I}_{l_1,l_2}$ is matched if 
	\begin{align}\label{match_condition_2}
		\#\{i: x_i\equiv {V_j}\}+\#\{i: x_i\equiv \ud{V_j}\}=\#\{i: y_i\equiv{V_j}\}+\#\{i: y_i\equiv\ud{V_j} \}.
	\end{align}
	Otherwise, $V_j$ is an unmatched index.

	If all the summation indices in $\mathcal{I}_{l_1,l_2}$ are matched, then $P_d$ is a {\it matched term}. 
	Otherwise, if there exists at least one 
	unmatched index, $P_d$ is an {\it unmatched term}. 
	If a term $P_d$ is unmatched, we indicate this fact by denoting it 
	by $P_d^o$. The collection of the unmatched terms of the form in (\ref{form})
	with degree $d$ is denoted by $\mathcal{P}_d^o \subset \mathcal{P}_d$. 
\end{definition}

From Definition \ref{def:unmatch_shift}, the terms on the right side of (\ref{third_example}) with $v\neq a \neq \ud{B}$ are indeed unmatched terms of the form in (\ref{form}), where both the index $a$ and $B$ are unmatched while the index $v$ is matched. Moreover, we give additional examples of unmatched terms below
\begin{align}
	\frac{1}{n^{3}}\sum^{*}_{v,a,B}\E[\wh{\gz_{v \bar a}} \wh{\gz_{\ud B v}}], \quad \frac{1}{n^{2}}\sum^{*}_{a,B}\E[\wh{\gz_{aa}} \wh{\gz_{BB}} \wh{\gz_{a \ud{B}}}  ], \quad \frac{1}{n^{2}}\sum^{*}_{a,B}\E[\wh{\gz_{B \bar a}} \wh{\gz_{B \bar a}} \wh{\gz_{a B}}  ].
\end{align}

\begin{proposition}\label{lemma3}
	Given an unmatched term $P^{o}_d$ of the form in (\ref{form}) with fixed $d\geq 1$, we have
	$$\E[P^o_d] =\OO_\prec(n^{-3/2}).$$
\end{proposition}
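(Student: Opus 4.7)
The plan is to reduce $P_d^o$ to matched terms via the cumulant expansion technique of Section~\ref{sec:prop} applied to an unmatched summation index, gaining factors of $n^{-1/2}$ at each step. Once reduced to a matched term of degree $d' \geq d$, the local law~\eqref{localg} controls it by $\Psi^{d'}$, and a careful power count delivers the desired $n^{-3/2}$ bound. The core idea is that an unmatched index $a$ can be ``integrated out'' via the resolvent identity $(H^z - \ii\eta)G = I$, producing either a matched contribution or another unmatched term of strictly lower unmatch-complexity; induction then closes the argument.

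Concretely, fix an unmatched index, WLOG a lowercase $a \in \mathcal{I}_{l_1,l_2}$ appearing strictly more often as a row index than as a column index in the product (the other cases follow by symmetry and by taking adjoints). Select one such row occurrence $\hat G_{a\mathfrak c}$; by the distinctness constraint of $\sum^*$, the generic case $\mathfrak c \neq a, \bar a$ yields $\hat G_{a\mathfrak c} = G_{a\mathfrak c}$. The resolvent identity combined with $H^z = W - Z$ gives
\begin{equation*}
\sum_B W_{aB}\, G_{B\mathfrak c} = \delta_{a\mathfrak c} + \ii\eta\, G_{a\mathfrak c} + z\, G_{\bar a \mathfrak c}.
\end{equation*}
Substituting this into $\E[P_d^o]$ and applying the cumulant expansion~\eqref{ex_deri} to $\sum_B W_{aB} \cdot$, the leading second-cumulant contribution is $-\frac{1}{n}\sum_B \E[G_{BB}\, G_{a\mathfrak c} R] \approx -m^z\, \E[G_{a\mathfrak c} R]$ by the local law, where $R$ denotes the product of the remaining $d-1$ shifted Green function entries. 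This combines with the $\ii\eta\, G_{a\mathfrak c}$ term above to yield a self-consistent equation $(\ii\eta + m^z)\,\E[G_{a\mathfrak c} R] = -\delta_{a\mathfrak c}\,\E[R] - z\,\E[G_{\bar a\mathfrak c} R] + (\text{corrections})$, with the prefactor $(\ii\eta + m^z)^{-1}$ uniformly bounded by~\eqref{m_1} in our parameter regime.

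The ``corrections'' split into three types: (a) second-cumulant derivatives hitting the other factors in $R$, each producing an expression of degree $d+1$ via~\eqref{rule_1} (gaining an extra $\Psi$ by the local law), introducing a column-$a$ occurrence that reduces the unmatched count of $a$, and a new summation over $B$ that may coincide with an existing index (yielding an extra $n^{-1}$); (b) higher-cumulant ($k \geq 3$) contributions carrying $n^{-k/2}$ and further new $G$-factors, dominated by the $k = 2$ case; and (c) the truncation error at order $K_0=100$, which is negligible by the moment bound~\eqref{eq:hmb}. Iterating this procedure on the residual unmatched terms via induction on the total unmatch-count, after finitely many steps all terms are fully matched, and since each iteration supplies at least one additional $n^{-1/2}$ factor (from the cumulant normalization or from a coincidence), combined with the terminal local-law bound $\Psi^{d'}$, the uniform $\OO_\prec(n^{-3/2})$ estimate follows.

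The main obstacle is the combinatorial bookkeeping required to track the interplay between the unmatched indices, the differentiation-generated new indices, and the index coincidences enforced by the distinctness constraint; in particular, one must verify that the induction on the unmatch-count is well-founded and that the resulting matched terms satisfy the required power-count uniformly in~$d$. A secondary technical difficulty is the treatment of conjugate-diagonal occurrences $\hat G_{a\bar a}$, where the $\mathfrak m^z$-subtraction contributes nontrivially and the resolvent identity takes a slightly modified form; these boundary cases are nevertheless handled analogously and produce terms of the same order, so that the uniform bound persists.
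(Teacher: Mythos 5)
There is a genuine gap, in fact two. First, the key quantitative claim that ``the prefactor $(\ii\eta+m^z)^{-1}$ [is] uniformly bounded'' is false. On the imaginary axis $m^z=\ii\Im m^z$, and by \eqref{m_1} (see also \eqref{localm}, which records $m^z=\OO(\Psi)$) in the regime $||z|-1|\lesssim n^{-1/2}(\log n)^{100}$, $n^{-1+\epsilon}\le\eta\le n^{-3/4-\epsilon}$ one has $\Im m^z\sim \sqrt{n}\,\eta\ll 1$, so $|\ii\eta+m^z|\sim\sqrt{n}\,\eta$ and its reciprocal is as large as $n^{1/2}$. Worse, your scalar ``self-consistent equation'' does not close: the term $-z\,\E[G_{\bar a\mathfrak c}R]$ carries an $O(1)$ coefficient, and $\E[G_{\bar a\mathfrak c}R]$ is of exactly the same type and size as the unknown $\E[G_{a\mathfrak c}R]$ (indeed $a$ and $\bar a$ are counted together in \eqref{match_condition}, so the unmatch structure is unchanged); after dividing by the small quantity $\ii\eta+m^z$ its coefficient blows up by a factor up to $n^{1/2}$. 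One must invert the full $2\times2$ stability structure coupling the $(a,\bar a)$ components, which is precisely what the identity \eqref{identity} with the matrix $M^z$ accomplishes; this step is missing from your reduction.

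Second, the iteration and termination scheme cannot reach $n^{-3/2}$. It is not true that ``each iteration supplies at least one additional $n^{-1/2}$ factor'': the generic second-cumulant term with a fresh, distinct summation index gains only one extra off-diagonal resolvent factor, i.e.\ a single $\Psi=(n\eta)^{-1}\ge n^{-1/4+\epsilon}$, and the leading index-replacement terms (the set $\mathcal{A}^o_d$ in \eqref{aa}) gain nothing at all --- they only reduce the number of $a/\bar a$-assignments. Prefactors $n^{-1/2}$ and $n^{-1}$ arise only from third- and fourth-order cumulants and from index coincidences. Consequently, stopping the induction once ``all terms are fully matched'' leaves matched terms of bounded degree $d'\le d+O_d(1)$, controlled by \eqref{p_d_local} only as $\Psi^{d'}$, which for $\eta$ near $n^{-3/4}$ is far above $n^{-3/2}$. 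The paper's proof instead verifies the structural fact that \emph{every} term produced by the expansion remains unmatched, and uses this to iterate the one-step $\Psi$-gain an $\epsilon$-dependent number $D-d\sim 2/\epsilon$ of times until $\Psi^{D}\lesssim n^{-3/2}$; both this second level of iteration and the unmatchedness preservation that makes it possible are absent from your argument.
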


\begin{remark}
	The above estimate is much smaller than the naive size in (\ref{p_d_local})
	either when $d$ is small, say $1\leq d\leq 5$, or when $\eta$ is close to $n^{-1+\epsilon}$. 
	For a general unmatched term $P_d^{o}$,
	the estimate $\OO_\prec(n^{-3/2})$ is sharp due to some matched terms of order $n^{-3/2}$ stemming
	from third order terms in the cumulant expansions with an index coincidence; see (\ref{same}) below. 
\end{remark}

\begin{remark}\label{remark_unmatch} The statement of Proposition \ref{lemma3} holds true even when 
	the parameters $z$ of the shifted Green function entries in the product in (\ref{form}) have different values. 
	We also remark that the proof of Proposition \ref{lemma3} is not sensitive to the fact that $m^z$ 
	given in (\ref{m_1}) is small, in fact
	the argument works as long as $m^{z}=\OO(1)$.  
\end{remark}

The rest of  Section~\ref{sec:prop}
is devoted to proving Proposition \ref{lemma3}. The proof relies on iterative cumulant expansions for the 
unmatched indices in products of resolvents. Before we dive into the formal proof of Proposition \ref{lemma3}, 
we start with expanding a concrete example of unmatched term to explain the one-step improvement
mechanism (essentially gaining an additional small factor $\Psi$) in Section~\ref{sec:ex}. 
The reader experienced with cumulant expansions may skip Section~\ref{sec:ex}. 
In Section~\ref{sec:proof_unmatch}, we state in Lemma~\ref{lemma:expand_mechanism} the 
full version of the improvement mechanism for a general unmatched term, and subsequently 
use Lemma~\ref{lemma:expand_mechanism} iteratively to prove Proposition \ref{lemma3}. 
Finally we present the complete proof of Lemma \ref{lemma:expand_mechanism} for a general unmatched term in Section~\ref{sec:full}.

\subsection{Expansion mechanism: an example}
\label{sec:ex}
In this subsection we consider a concrete example 
of an unmatched term in (\ref{form}) with degree three and $x_1\equiv a, y_1\equiv B,x_2\equiv B, y_2\equiv a, x_3\equiv \bar a, y_3\equiv B$, \ie
\begin{align}\label{p_3_o0}
	P_3^{o}=\frac{1}{n^2} \sum_{a,B}^{*} \wh{\gz_{aB}} \wh{\gz_{B a}} \wh{\gz_{ \bar a B}},
\end{align}
whose naive estimate is $O_\prec(\Psi^3)$ from (\ref{p_d_local}).
We will show how to improve this naive estimate using cumulant expansions essentially by an additional small factor $\Psi$.

For the term $P_3^o$ in (\ref{p_3_o0}) with an unmatched index $a$ which appears twice as a row and once as a column in the product of resolvents, we aim to expand using the unmatched $x_1\equiv a$ to show that 
\begin{align}\label{p_3_o}
	\E[P_3^{o}]
	=O_\prec\big(\Psi^{4}+n^{-1}\Psi+n^{-1/2}\Psi^3+n^{-3/2}\big).
\end{align}
We will see  that the terms that contribute the first three error terms with $\Psi$-factors in (\ref{p_3_o}) still have unmatched
indices. So we can continue expanding these unmatched terms to get an arbitrary number of  $\Psi$-improvements
and ending up with the final  estimate $O_\prec(n^{-3/2})$ given in Proposition~\ref{lemma3}.
The corresponding iteration scheme will be presented directly in full generality for any unmatched term in (\ref{form})
in   the next subsection.

Recall the following identity from~\cite[Eq.~(5.2)]{1912.04100}
\begin{align}\label{identity}
	\wh{\gz}=-M^{z} \ud{ WG^{z}}+\<\wh \gz\> M^z G^z,
\end{align}
where $M^{z}=M^{z}(\ii \eta)$ is the deterministic matrix given in (\ref{Mmatrix}) and $\<\gz\>$ is given in (\ref{udG}). 
The underline notation $\ud{ WG^{z}}$ is defined as follows. For a function $f(W)$ of the random matrix 
$W$ given in (\ref{Wmatrix}), we define
\begin{align}\label{ud_W}
	\ud{ Wf(W)}:= Wf(W)-\wt \E \wt W (\partial_{\wt W} f)(W),
\end{align}
where  $\wt W$ is an independent of $W$ defined as in (\ref{Wmatrix}) with $X_t$ being replaced 
by a complex Ginibre ensemble. Here $\partial_{\widetilde{W}}$ denoted the directional derivative
in the direction $\widetilde{W}$, the expectation in \eqref{ud_W} is with respect to this matrix.

Applying the identity in (\ref{identity}) on the first Green function entry $\wh {\gz_{aB}}$ in (\ref{p_3_o0})
and performing cumulant expansion formula on the resulting $\ud{W \gz}$ given in (\ref{ud_W}), we have 
\begin{align}\label{example_expand}
	\E[P_3^{o}]=&-\frac{m^{z}}{n^{3}}\sum_{a,B}^{*} \sum_{J} 
	\E \Big[\frac{\partial \wh{\gz_{B a}} \wh{\gz_{ \bar a B}}}{\partial w_{Ja}} G^{z}_{J B} \Big]
	+\frac{m^{z}}{n^{2}}\sum_{a,B}^{*} \E \Big[\gz_{a B} \wh{\gz_{B a}} \wh{\gz_{ \bar a B}} \<\wh \gz \> \Big]\nonumber\\
	&-\frac{\mathfrak{m}^{z}}{n^{3}}\sum_{a,B}^{*} \sum_{j} 
	\E \Big[\frac{\partial \wh{\gz_{B a}} \wh{\gz_{ \bar a B}} }{\partial w_{j\bar a}} G^{z}_{j B} \Big]
	+\frac{\mathfrak{m}^{z}}{n^{2}}\sum_{a,B}^{*} \E \Big[\gz_{\bar a B} \wh{\gz_{B a}} \wh{\gz_{ \bar a B}} \<\wh \gz \> \Big]\nonumber\\
	&-\frac{m^z}{n^2} \sum_{p+q+1\geq 3} \frac{c^{(p+1,q)} }{p!q!n^{\frac{p+q+1}{2}}} 
	\Big( \sum_{a,B}^{*}  \sum_{J} 
	\E \Big[\frac{\partial^{p+q} \wh{\gz_{B a}} \wh{\gz_{ \bar a B}}  G^{z}_{JB}}{\partial w^{p}_{aJ} \partial w^q_{Ja}}  \Big] \Big)\nonumber\\
	&-\frac{\mathfrak{m}^{z}}{n^2} \sum_{p+q+1\geq 3} \frac{ c^{(q,p+1)} }{p!q!n^{\frac{p+q+1}{2}}} 
	\Big( \sum_{a,B}^{*}  \sum_{j} 
	\E \Big[\frac{\partial^{p+q} \wh{\gz_{B a}} \wh{\gz_{ \bar a B}}  G^{z}_{j B}}{\partial w^{p}_{\bar aj} \partial w^q_{j \bar a}}  \Big] \Big),
\end{align}
with $c^{(p,q)}$ being the $(p,q)$-th cumulants of the normalized i.i.d. entries $\sqrt{n}w_{aB}$. We first look at the third order terms with $p+q+1=3$ in (\ref{example_expand}). 
By direct computations using the differentiation rule in (\ref{rule_1}), since $J$ or $j$ is a fresh index 
appearing three times, the number of resulting (shifted) off-diagonal entries remains at least $d$ 
with unmatched $J$ or $j$. From the local law in (\ref{localg}), these third order terms can be 
bounded by $O_\prec(n^{-1/2}\Psi^{3}+n^{-3/2})$, where the last error $n^{-3/2}$ stems from the 
existence of index coincidences, \eg $J=B$ or $j=\ud{B}$. Similarly, the fourth order terms with $p+q+1=4$ 
can be bounded by $O_\prec(n^{-1}\Psi^3+n^{-2})$ and note that the index $J$ or $j$ could 
be matched (for $p=1,q=2$), however the index $a$ remains unmatched. 
We can truncate the expansion at the fourth order with an error $O_{\prec}(n^{-3/2})$ using (\ref{localg}). 
Thus the higher order terms (\ie the last two lines of (\ref{example_expand})) can be bounded by
\begin{align}\label{higher_cumu}
	-\frac{m^z}{n^2} \sum_{p+q+1\geq 3}\left(\cdots\right)-\frac{\mathfrak{m}^{z}}{n^2} \sum_{p+q+1\geq 
		3}\left(\cdots\right)=O_\prec(n^{-1/2}\Psi^3+n^{-3/2}).
\end{align}

We next focus on the second order terms, \ie the first two lines of (\ref{example_expand}). We start with the first term on the right side of (\ref{example_expand}). After direct computations, we split the summation over the fresh index $J\in\llbracket n+1,2n\rrbracket$ into the following three cases, \ie
\begin{align}\label{middle_step_ex}
	-\frac{m^{z}}{n^{3}}\sum_{a,B}^{*} \sum_{J}
	\E \Big[&\frac{\partial \wh{\gz_{B a}} \wh{\gz_{\bar a B}} }{\partial w_{Ja}} G^{z}_{J B} \Big]
	=\frac{m^{z}}{n^{3}}\sum_{a,B}^{*} \sum_{J} \E \Big[\big( \gz_{BJ } \gz_{aa} \wh{\gz_{\bar a B}}
	+\wh{\gz_{B a}} \gz_{\bar a J} \gz_{a B} \big) G^{z}_{J B} \Big]\nonumber\\
	=:&   \left(\frac{m^{z}}{n^{3}}\sum_{a,B,J}^{*}+\frac{m^{z}}{n^{3}}\sum_{a,B}^*\sum_{J}\delta_{J \bar a}
	+\frac{m^{z}}{n^{3}}\sum_{a,B}^*\sum_{J} \delta_{JB}\right)\E[(\cdots)].
\end{align}
We first consider the last two cases with index coincidence $J=\bar a$ or $J=B$. We will create diagonal entries with $J=\bar a$ or $J=B$ and as the result the number of off-diagonal entries will be reduced to at least one. Using (\ref{localg}), the last two cases in (\ref{middle_step_ex}) can be bounded by
\begin{align}\label{example_decrease}
	\frac{m^{z}}{n^{3}}\sum_{a,B}^*\sum_{J}\delta_{J \bar a}\E[(\cdots)]
	+\frac{m^{z}}{n^{3}}\sum_{a,B}^*\sum_{J} \delta_{JB}\E[(\cdots)]= O_\prec(n^{-1}\Psi). 
\end{align}
We remark that we did not use the smallness of $m^{z}$ given in (\ref{localm}), since this smallness is not 
essential for the $\Psi$-improvement. For the first case in (\ref{middle_step_ex}) 
with $a \neq \ud{B} \neq \ud{J}$, we transform the resulting terms into the form in (\ref{form}), 
\ie write the Green function entries with their shifted versions using (\ref{shifted}). In particular, 
the diagonal entry $\gz_{aa}$, from acting $\partial w_{Ja}$ on $\wh{\gz_{Ba}}$, will be replaced 
with $m^{z}+\wh{\gz_{aa}}$. Then 
\begin{align}\label{p_3_middle}
	\frac{m^{z}}{n^{3}}\sum_{a,B,J}^{*}\E[(\cdots)]=&\frac{(m^{z})^2}{n^{3}} \sum_{a,B,J}^{*} \E \Big[ \wh{G^{z}_{J B}} \wh{\gz_{BJ }}
	\wh{\gz_{\bar a B}} \Big]\nonumber\\
	&+\frac{m^{z}}{n^{3}}\sum_{a,B,J}^{*}\E \Big[\wh{\gz_{BJ }} \wh{\gz_{aa}} \wh{\gz_{\bar a B}} \wh{G^{z}_{J B}} \Big]+\frac{m^{z}}{n^{3}}\sum_{a,B,J}^{*}\E \Big[\wh{\gz_{B a}} \wh{\gz_{\bar a J}} \wh{\gz_{a B}} \wh{G^{z}_{J B}} \Big].
\end{align}
We note that the terms in the second line of (\ref{p_3_middle}) have degree being increased to four to accommodate 
a pair of the fresh index $J$, hence can be bounded by $O_\prec(\Psi^4)$ from (\ref{p_d_local}).  Therefore, 
combining (\ref{middle_step_ex})-(\ref{p_3_middle}) the first term on the right side of (\ref{example_expand}) 
can be estimated as
\begin{align}\label{third_term_ex_00}
	-\frac{m^{z}}{n^{3}}\sum_{a,B}^{*} \sum_{J}
	\E \Big[\frac{\partial \wh{\gz_{B a}} \wh{\gz_{\bar a B}} }{\partial w_{Ja}} G^{z}_{J B} \Big]=&\frac{(m^{z})^2}{n^{3}} \sum_{a,B,J}^{*} \E \Big[ \wh{G^{z}_{J B}} \wh{\gz_{BJ }}
	\wh{\gz_{\bar a B}} \Big]+O_\prec(\Psi^4+n^{-1}\Psi),
\end{align}
where the first error $\Psi^4$ is from the second order terms with higher degrees and $n^{-1}\Psi$ is from the second order terms with the index coincidences in (\ref{example_decrease}).


The third term on the right side of (\ref{example_expand}) can be estimated similarly, \ie
\begin{align}\label{third_term_ex}
	-\frac{\mathfrak{m}^{z}}{n^{3}}\sum_{a,B}^{*} \sum_{j} \E \Big[\frac{\partial \wh{\gz_{B a}} 
		\wh{\gz_{ \bar a B}} }{\partial w_{j\bar a}} G^{z}_{j B} \Big]=&
	\frac{\mathfrak{m}^{z}}{n^3} 
	\sum_{a,B}^{*} \sum_{j} \E\Big[ \big( {\gz_{B j}} \gz_{\bar a a}\wh{\gz_{\bar a B}} 
	+\wh{\gz_{B a}}  \gz_{\bar a j} \gz_{\bar a B}\big) \gz_{jB} \Big]\nonumber\\
	=&\frac{|\mathfrak{m}^{z}|^2}{n^3} \sum_{a,B,j}^{*} \E\big[ \wh{\gz_{jB}} \wh{\gz_{B j}} \wh{\gz_{ \bar a B}}  \big]
	+O_\prec(\Psi^4+n^{-1}\Psi).
\end{align}

It remains to estimate the second and fourth term on the right side of (\ref{example_expand}). Since we restrict to $a \neq \ud{B}$ (equivalent to $\bar a\neq B$) in the summation $\sum^*_{a,B}$, we write $\gz_{aB}=\wh{\gz_{aB}}$ and $\gz_{\bar a B}=\wh{\gz_{\bar a B}}$. We also write out $\<\wh\gz\>$ as in (\ref{udG}) and clearly these two terms are of the form in (\ref{form}) with degree increased to four and the index $a$ remains unmatched. In particular, these two terms gain additional $\Psi$-factor from $\<\wh\gz\>$ and thus can be bounded by $O_\prec(\Psi^4)$.

Combining (\ref{example_expand}), (\ref{higher_cumu}), (\ref{third_term_ex_00}) and (\ref{third_term_ex}), we conclude
\begin{align}\label{first_step_ex}
	\E[P_3^{o}] =&\frac{(m^{z})^2}{n^{3}} \sum_{a,B,J}^{*} \E \Big[ \wh{G^{z}_{J B}}  \wh{\gz_{BJ }} \wh{\gz_{\bar a B}}\Big]+\frac{|\mathfrak{m}^{z}|^2}{n^3} \sum_{a,B,j}^{*} \E\big[ \wh{\gz_{jB}}  \wh{\gz_{B j}} \wh{\gz_{ \bar a B}} \big]
	\nonumber\\
	& +O_\prec(\Psi^{4}+n^{-1}\Psi+n^{-1/2}\Psi^3+n^{-3/2}),
\end{align}
where the first two errors are from the second order terms with higher degrees (\eg in the second line of (\ref{p_3_middle})) 
and with the index coincidences (\eg with $J=\bar a$ or $J=B$ in (\ref{example_decrease})), respectively, and the last 
two errors are from the higher order terms in (\ref{higher_cumu}). Most importantly, for these leading terms of degree
three appearing on the first line of (\ref{first_step_ex}), we have replaced one pair of the index $a$ of the 
original term $P_3^o$ in (\ref{p_3_o0}) with a fresh index $J$ or $j$. We now introduce a notation for such
index replacement, \ie if
$$P_3^o=\frac{1}{n^2} \sum_{a,B}^{*} \wh{\gz_{aB}} \wh{\gz_{B a}} \wh{\gz_{ \bar a B}},$$
which is a term of the form in (\ref{form}) with $x_1\equiv a, y_1\equiv B,x_2\equiv B, y_2\equiv a, x_3\equiv \bar a, y_3\equiv B$, 
then we define
\begin{align}\label{replacement}
	P^o_3(x_1,y_2 \rightarrow J):=\frac{1}{n^3} \sum_{a,B,J}^{*} \wh{\gz_{JB}} \wh{\gz_{B J}} \wh{\gz_{ \bar a B}};\qquad 
	P^o_3(x_1,y_2 \rightarrow j):=\frac{1}{n^3} \sum_{a,B,j}^{*} \wh{\gz_{jB}} \wh{\gz_{B j}} \wh{\gz_{ \bar a B}},
\end{align}
where $j$ and $J$ are 'symbolic' lower and upper case letters indicating the range of the new summation index.
Using these notations, the expansion in (\ref{first_step_ex}) can be written for short as 
\begin{align}\label{replacement_form}
	\E[P_3^{o}]=&(m^{z})^2\E[P^o_3(x_1,y_2 \rightarrow J)]+ |\mathfrak{m}^{z}|^2 \E[P^o_3(x_1,y_2 \rightarrow j)]\nonumber\\
	&+O_\prec(\Psi^{4}+n^{-1}\Psi+n^{-1/2}\Psi^3+n^{-3/2}).
\end{align}
Notice that in the two explicit  third order terms the
number of assignments of the unmatched index $a$ after replacement has been reduced
by two  to one, in fact it appears as its conjugation $\bar a$ with $x_3 \equiv \bar a$; see (\ref{replacement}). 
The good news is that the index $a$ (in fact $\bar a$) remains unmatched, thus we 
can further expand these leading terms using $x_3 \equiv \bar a$ to gain the $\Psi$-improvement.

We will look at only the second leading term 
on the right side of (\ref{replacement_form}), and the first one can be estimated similarly
(actually more easily if we take $m^{z} =\OO(\Psi)$ into consideration).
Omitting the factor $|\mathfrak{m}^{z}|^2 \sim 1$ and expanding the Green function entry $\wh{\gz_{\bar a B}}$, we obtain as in (\ref{example_expand}),
\begin{align}\label{four_step_ex}
	\E[P^o_3(x_1,y_2 \rightarrow j)]=&\frac{1}{n^3} \sum_{a,B,j}^{*} \E\big[ \wh{\gz_{jB}} \wh{\gz_{B j}} \wh{\gz_{\bar a B}}\big]\nonumber\\
	=&-\frac{ m^{z}}{n^{4}}\sum_{a,B,j}^{*} \sum_{j'} 
	\E \Big[\frac{\partial \wh{\gz_{B j}}\wh{\gz_{jB}} }{\partial w_{j' \bar a}} G^{z}_{j' B} \Big]
	+\frac{ m^{z}}{n^{3}}\sum_{a,B,j}^{*} \E \Big[{\gz_{ \bar a B}} \wh{\gz_{B j}} \wh{\gz_{j B}}   \<\wh \gz \> \Big]\nonumber\\
	&-\frac{\overline{\mathfrak{m}^{z}} }{n^{4}}\sum_{a,B,j}^{*} \sum_{J'} 
	\E \Big[\frac{\partial \wh{\gz_{B j}} \wh{\gz_{j B}} }{\partial w_{J'  a}} G^{z}_{J' B} \Big]
	+\frac{ \overline{\mathfrak{m}^{z}} }{n^{3}}
	\sum_{a,B,j}^{*} \E \Big[{\gz_{a B}} \wh{\gz_{B j}} \wh{\gz_{j B}} \<\wh \gz \> \Big]\nonumber\\
	&+O_\prec(n^{-1/2}\Psi^3+n^{-3/2}).
\end{align}
where the last error is from higher order terms as in (\ref{higher_cumu}). Since the index $a$ or 
its conjugate $\bar a$ no longer appears in the remaining product of Green function entries, 
we gain additional $\Psi$ from one more off-diagonal Green function entry or a factor $\<\wh{\gz}\>$ 
on the right side of (\ref{four_step_ex}), plus an error $O_\prec(n^{-1}\Psi^2)$ from the index coincidences,
\eg $J'=B$ or $J'=\bar j$. Therefore, since the number of assignments of the unmatched index $a$ 
after replacement has been reduced to one, we obtain the improved estimate
\begin{align}\label{improve_replace}
	\E[P^o_3(x_1,y_2 \rightarrow j)]	=&O_\prec(\Psi^4+n^{-1}\Psi^2+n^{-1/2}\Psi^3+n^{-3/2}).
\end{align}
The same upper bound also applies to $\E[P^o_3(x_1,y_2 \rightarrow J)]$. 

Therefore, combining (\ref{replacement_form}) and (\ref{improve_replace}), we have improved the 
naive estimate~\eqref{p_d_local} of $\E[P_3^{o}]$ to the better bound in (\ref{p_3_o}).

\subsection{Expansion mechanism: general case and proof of Proposition \ref{lemma3}}\label{sec:proof_unmatch}

Given any unmatched term $P_d^o \in  \mathcal{P}_d^o$ in (\ref{form}), from 
Definition \ref{def:unmatch_shift}, there must exist a lower case index $v_j \in \mathcal{I}_{l_1,l_2}$ 
or an upper case index $V_j \in \mathcal{I}_{l_1,l_2}$ such that this index (or its conjugation)
is assigned to more row indices of Green function
entries in the product than  column indices. For notational 
simplicity, we may denote this special unmatched index 
by $a \in \llbracket 1,n \rrbracket$ and $B\in \llbracket n+1,2n \rrbracket$, respectively.

We will first consider the formal case with an unmatched index $a \in \llbracket 1,n \rrbracket$ satisying
\begin{align}\label{choose}
	\#\{i:x_i \equiv a\}+\#\{i:x_i \equiv \overline{a}\} > \#\{i:y_i \equiv a\}+\#\{i:y_i \equiv \overline{a}\},
\end{align}
and the latter case with $B \in \llbracket n+1,2n \rrbracket$ will follow similarly. 
Then there exists an off-diagonal Green function 
entry $G_{x_i y_i}$ with $x_i \equiv a$ and $y_i \not\equiv a, \bar a$. 
Without loss of generality we may assume that this is the first Green function factor, i.e.
we set $x_1\equiv a$ and $y_1 \not\equiv a, \bar a$. We will denote this term 
by $P_d^o(x_1 \equiv a)$ to emphasize that we will expand it using the unmatched index $x_1\equiv a$. Then we have the following estimate whose proof will be given in the next subsection:  
\begin{lemma}\label{lemma:expand_mechanism}
	Let  $P_d^o\in  \mathcal{P}_d^o$ be a given  term with an unmatched index $a$ satisfying (\ref{choose}) and without loss of generality assigned to $x_1$, \ie $P^o_d=P_d^o(x_1 \equiv a)$. Let
	$$
	k^{(r)}_a: = \#\{ i: x_i \equiv a, \bar a\}; \qquad  k^{(c)}_a:=\#\{i: y_i \equiv a, \bar a\}
	$$
	denote the number of $a/\bar a$-assignments as a row or a column index of 
	the Green function entries, respectively, such that $k^{(r)}_a>k^{(c)}_a$.
	Then there exist  finite (bounded by some constant depending only on $d$) subsets 
	\begin{align}\label{subset}
		{\mathcal{A}}^o_{d}\subset \mathcal{P}^o_{d},
		\quad {\mathcal{A}}^o_{>d}\subset  
		\mathcal{P}^o_{d+1}, \quad
		{\mathcal{B}}^o_{\ge d}\subset  \bigcup_{d'\ge d}\mathcal{P}^o_{d'}, \quad
		{\mathcal{C}}^o_{\ge d-2}\subset  \bigcup_{d'\ge d-2}\mathcal{P}^o_{d'}
	\end{align}
	with the property that the number of $a/\bar a$-assignments as a row or column index in all
	elements of ${\mathcal{A}}^o_{d}$ is reduced to $k^{(r)}_a-1$ and $k^{(c)}_a-1$, respectively, 
	so that we have the bound
	\begin{align}\label{aa}
		\big|\E[P_d^o(x_1 \equiv a)] \big| \lesssim &\sum_{P^o_{d'} \in{\mathcal{A}}^o_{d}} \big|\E [ P^o_{d'} ]\big|+	
		\sum_{P^o_{d'} \in {\mathcal{A}}^o_{>d}} \big|\E [ P^o_{d'} ]\big| \nonumber\\
		&+ \frac{1}{\sqrt{n}}\sum_{P^o_{d'} \in {\mathcal{B}}^o_{\ge d}} \big|\E [ P^o_{d'} ]\big| 
		+ \frac{1}{n}\sum_{P^o_{d'} \in {\mathcal{C}}^o_{\ge d-2}} \big|\E [ P^o_{d'} ]\big| +O_\prec(n^{-3/2}),
	\end{align}
	here $d' $ denotes a degree compatible with (\ref{subset}). In particular, if $k^{(c)}_{a}=0$, then ${\mathcal{A}}^o_{d}$ is an empty set.
\end{lemma}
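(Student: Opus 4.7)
\medskip

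\noindent\textbf{Proof proposal.} The plan is to mimic and generalize the expansion in Section~\ref{sec:ex} (cf.~\eqref{example_expand}--\eqref{first_step_ex}), classifying the resulting terms into the four buckets $\mathcal{A}^o_d,\mathcal{A}^o_{>d},\mathcal{B}^o_{\ge d},\mathcal{C}^o_{\ge d-2}$. The starting point is the identity~\eqref{identity} applied to the factor $\wh{G^z_{x_1 y_1}}$ in $P^o_d(x_1\equiv a)$, where by assumption $x_1\equiv a$ and $y_1\not\equiv a,\bar a$. Writing $\wh{G^z_{a y_1}}=-\sum_{\mathfrak{u}}M^z_{a\mathfrak{u}}(\underline{WG^z})_{\mathfrak{u} y_1}+\langle\wh{G^z}\rangle(M^z G^z)_{a y_1}$ and using $M^z_{a \mathfrak{u}}=m^z \delta_{a,\mathfrak{u}}+\mathfrak{m}^z \delta_{\mathfrak{u},\bar a}$, the first piece becomes a standard cumulant expansion in the entries $w_{a\mathfrak{u}}$ and $\overline{w_{a\mathfrak{u}}}$ (with $\mathfrak{u}\in\llbracket n+1,2n\rrbracket$ in the $m^z$ branch and $\mathfrak{u}\in\llbracket 1,n\rrbracket$ in the $\mathfrak{m}^z$ branch), while the second piece already contains the additional small factor $\langle\wh{G^z}\rangle$ that increases the effective degree by one.

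First I would organize the $p+q+1=2$ (second order, i.e., Gaussian) contributions. Differentiating the remaining $d-1$ Green function factors with respect to $w_{\mathfrak{u}a}$ (or $w_{a\mathfrak{u}}$) via~\eqref{rule_1} produces, for each differentiated factor, a pair of Green function entries in which one endpoint is the fresh summation index $\mathfrak{u}=:J$ (or $j$) and the other is $a$ or $\bar a$. Splitting the sum over $\mathfrak{u}$ into the distinct-index part ($\mathfrak{u}\neq v_{j'},\bar v_{j'},V_{j'},\ud{V_{j'}}$ for all other indices) and the coinciding part gives the dichotomy illustrated in~\eqref{middle_step_ex}--\eqref{p_3_middle}. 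On the distinct part, after replacing every resulting diagonal entry $G^z_{\mathfrak{u}\mathfrak{u}}$, $G^z_{a a}$, $G^z_{\bar a a}$, etc.\ by $M^z+\wh{G^z}$ via~\eqref{shifted}, one of the resulting summands is of degree exactly $d$ with a fresh index carrying the pair that was formerly assigned to $a$ (this produces $\mathcal{A}^o_d$; the two occurrences of $a$ that were paired are gone, while the remaining $k_a^{(r)}+k_a^{(c)}-2$ occurrences still make $a$ unmatched by~\eqref{choose}), while every other summand has strictly higher degree and lands in $\mathcal{A}^o_{>d}\subset \mathcal{P}^o_{d+1}$ (we truncate these at degree $d+1$; anything beyond is absorbed into the $O_\prec(n^{-3/2})$ error through~\eqref{p_d_local} together with $\Psi\le n^{-\epsilon}$). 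On the coinciding part, each coincidence kills one free summation, generating the prefactor $1/n$ and the bucket $\mathcal{C}^o_{\ge d-2}$; even if two off-diagonal factors merge into diagonal ones the degree drops by at most two, and the surviving term is still unmatched because the unmatched imbalance~\eqref{choose} is preserved up to the forced identification (a routine check on the combinatorics of~\eqref{match_condition}).

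Next I would handle the $p+q+1=3$ (third order) terms. Each such term carries the explicit prefactor $n^{-1/2}$ from the cumulant scaling, and after differentiation still contains at least $d$ resolvent factors (one is gained from a fresh index $\mathfrak{u}$ appearing three times, which by parity is itself unmatched but does not destroy the unmatchedness of $a$). After the shift $G=\wh{G}+M^z$ we obtain a finite linear combination of elements of $\mathcal{P}^o_{d'}$ with $d'\ge d$, yielding the bucket $\mathcal{B}^o_{\ge d}$ with its $1/\sqrt{n}$ prefactor. The contributions coming from the $\langle\wh{G^z}\rangle(M^z G^z)_{ay_1}$ piece of~\eqref{identity} naturally fit into $\mathcal{A}^o_{>d}$ after the shift, since the $\langle\wh{G^z}\rangle$ itself contributes an averaged factor of shifted resolvents of degree one.

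Finally, the cumulants of order $\ge 4$ are truncated at $K_0=O(1)$ via the tail bound from~\eqref{eq:hmb} exactly as in~\eqref{ex_deri}; their contribution is bounded, via the local law~\eqref{localg} and power counting as in~\eqref{naive_g}, by $n^{-(p+q-1)/2}(\Psi^{d-1+p+q}+n^{-1})\le O_\prec(n^{-3/2})$ uniformly in $d\ge 1$ once $p+q+1\ge 4$, using $\Psi\le n^{-\epsilon}$. Collecting everything produces~\eqref{aa}. The special case $k_a^{(c)}=0$ (no $a$ assigned as a column) means the first Gaussian expansion step cannot produce any degree-$d$ survivor: every second-order differentiation of an entry $\wh{G^z_{x_i y_i}}$ with $x_i\not\equiv a,\bar a$ and $y_i\not\equiv a,\bar a$ strictly increases the number of off-diagonal factors, so $\mathcal{A}^o_d=\emptyset$.

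The main obstacle is the bookkeeping in the second-order distinct-index step: one has to verify that, after expanding $G=\wh G+M^z$ in every diagonal entry produced by differentiation, the \emph{only} term of degree $d$ is the one coming from pairing the freshly-created $G^z_{aa}$ (or $G^z_{\bar a \bar a}$, $G^z_{a\bar a}$ depending on which cumulant branch) with $m^z$ or $\mathfrak{m}^z$, and that in this survivor the two paired $a$-occurrences are replaced by the fresh index $J$ (or $j$) while the remaining $k_a^{(r)}+k_a^{(c)}-2$ assignments of $a/\bar a$ are left untouched. This relies on a careful case analysis of~\eqref{rule_1} applied to every factor $\wh{G^z_{x_i y_i}}$ depending on whether $x_i, y_i$ are $\equiv a,\bar a$ or not, but it is a purely combinatorial, $d$-uniformly bounded enumeration, so no new analytic input is needed beyond the local law~\eqref{localg}.
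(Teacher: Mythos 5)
Your overall strategy coincides with the paper's: apply the identity~\eqref{identity} to $\wh{\gz_{ay_1}}$, cumulant-expand $\ud{WG^z}$, split each fresh summation into distinct and coinciding parts, and sort the output into the four buckets exactly as in~\eqref{m_1_term}--\eqref{m_2_term} and~\eqref{higher_terms}. The identification of $\mathcal{A}^o_d$ with the index-replacement terms coming from pairing a freshly created diagonal entry $G_{ay_p}$ (with $y_p\equiv a,\bar a$) against $m^z$ or $\mathfrak m^z$, the placement of the $\<\wh{G^z}\>$-terms in $\mathcal{A}^o_{>d}$, the $1/\sqrt n$ bucket from third-order cumulants, and the argument for $k_a^{(c)}=0\Rightarrow\mathcal{A}^o_d=\emptyset$ all match the paper's proof.

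There is, however, one genuine gap: your disposal of the fourth-order cumulant terms. You claim that all terms with $p+q+1\ge 4$ are $\OO_\prec(n^{-3/2})$ via the bound $n^{-(p+q-1)/2}(\Psi^{d-1+p+q}+n^{-1})$. For $p+q+1=4$ this reads $n^{-1}\Psi^{d+2}$, and since $\Psi$ is only bounded above by $n^{-\epsilon}$ (indeed $\Psi=n^{-1/8+\tau}$ at the relevant scale $\eta_0$), for small $d$ this is nowhere near $n^{-3/2}$ --- e.g.\ $d=1$ gives $n^{-1}\Psi^{3}=n^{-11/8+3\tau}\gg n^{-3/2}$. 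The fourth-order terms must therefore be retained, with their $1/n$ prefactor, inside $\mathcal{C}^o_{\ge d-2}$ (the paper puts them in $\mathcal{C}^o_{\ge d}\subset\mathcal{C}^o_{\ge d-2}$, see~\eqref{higher_terms}) so that they can be re-expanded in the iteration of Proposition~\ref{lemma3}; only the cumulants of order $\ge 5$, carrying $n^{-(p+q+1)/2}\cdot n\le n^{-3/2}$, may be absorbed into the error. Relatedly, your exponent $\Psi^{d-1+p+q}$ overcounts the gain: the higher derivatives can produce diagonal entries of the type $G_{\mathfrak u,\mathrm{conj}(\mathfrak u)}$, which by~\eqref{localg} are only $\OO_\prec(1)$, so the correct statement (and the one the paper uses) is merely that at least $d$ off-diagonal factors survive, giving $\Psi^{d}$. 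The fix is routine --- move the fourth-order terms into the $\mathcal{C}^o$ bucket rather than the error --- but as written the inequality you assert is false, and with it the proof of~\eqref{aa} is incomplete.
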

The precise structure of  the terms in the rhs. of~\eqref{aa} is
irrelevant, hence we do not follow them explicitly,  we will only need a few properties.
Note that all terms in the rhs. of~\eqref{aa} remain unmatched; this key
feature will allow us to iterate this estimate. 
We now briefly  explain the origin and the main features of each sums
and show that every term in the rhs.
is "better" in a certain sense than the initial term.

The set ${\mathcal{A}}^o_{d}$ contains four types of terms (if exist) of degree $d$ 
obtained by index replacements defined in (\ref{replacement}). 
%
%
They can be written explicitly as   
\begin{align}\label{aaexpl}
	&(m^{z})^2 \sum_{i\geq 2: y_i \equiv a} \E\big[P^o_d(x_1, y_i \rightarrow J)\big] 
	+m^z \mathfrak{m}^{z} \sum_{i\geq 2: y_i \equiv \bar a} \E\big[P^o_d(x_1, y_i \rightarrow J)\big]\nonumber\\
	&+m^z \mathfrak{m}^{z} \sum_{i\geq 2:  y_i \equiv \bar a} \E\big[ P ^o_d(x_1, y_i \rightarrow j)\big]
	+|\mathfrak{m}^{z}|^2\sum_{i\geq 2: y_i \equiv a} \E\big[ P ^o_d(x_1, y_i \rightarrow j)\big],
\end{align}	
although  the only important fact is that the number of $a/\bar a$-indices 
is reduced by two (\ie one from the row and one from the column) compared with the initial term $P_d^o(x_1 \equiv a)$.   These are the generalisations
of the first two terms in the rhs. of~\eqref{first_step_ex} for the concrete example.

The set ${\mathcal{A}}^o_{>d}$ corresponds to the second order terms with higher degrees, 
\eg in the second line in~\eqref{p_3_middle}; their degree
is increased by one compared to the original term.

The set ${\mathcal{B}}^o_{\ge d}$ comes from the third order cumulant expansion,
indicated by the additional $1/\sqrt{n}$
prefactor (see the last two lines of~\eqref{example_expand} with $p+q+1=3$). 
The number of off-diagonal Green function entries remains at least $d$ and we gained $1/\sqrt{n}$ from the third order cumulants.

Finally, the set ${\mathcal{C}}^o_{\ge d-2}$ coming with a prefactor $1/n$ has two very different sources. 
On the one hand, it comes from the fourth order cumulant expansion
carrying an extra $1/n$ and the number of off-diagonal Green function entries remains at least $d$. 
On the other hand, in the second order cumulant expansion 
the fresh index $J$ or $j$ may coincide with an old index creating a diagonal term. 
Each diagonal term has to be re-written, e.g., as $G^z_{aa} = m^z
+\wh G^z_{aa}$, and thus the term carrying $m^z$  "loses" a $G$-factor. 
Thus the degree may be reduced by two from these diagonal elements; see \eg (\ref{middle_step_ex}) with $J=B$.
In this case the $1/n$ comes from
the reduced number of summation indices.

For definiteness, we stated  and explained Lemma~\ref{lemma:expand_mechanism} for the lower case index $a$,  the modifications for the upper case index $B$ are 
very minor. In the latter case
we may set $x_1\equiv B, y_1 \not\equiv B,\ud{B}$ and denote the term by $P_d^o(x_1 \equiv B)$. This term can be expanded using the unmatched index $x_1 \equiv B$. The abstract bound (\ref{aa}), with the index $a$
replaced with $B$, remains unchanged, only the (irrelevant) explicit formula
changes:  $J$ and $j$ are interchanged within both lines of (\ref{aaexpl}).

We are now ready to prove Proposition \ref{lemma3} by iteratively invoking Lemma~\ref{lemma:expand_mechanism} 
for an unmatched lower case index $a \in \llbracket 1, n \rrbracket$ and the analogous formula for $B \in \llbracket n+1, 2n \rrbracket$. 
The proof in fact relies on iterations on two different levels: the first level  uses
Lemma~\ref{lemma:expand_mechanism} to gain one $\Psi$-factor improvement as explained in the previous Section~\ref{sec:ex}; 
the second level is to iterate this one-step $\Psi$-improvement to an arbitrary power of $\Psi$
until it becomes negligible and only the $O_\prec(n^{-3/2})$ error survives.

\begin{proof}[Proof of Proposition \ref{lemma3}]
	
	Given an unmatched term $P_d^o$ in (\ref{form}), without loss of generality, we may assume that there
	exists an unmatched index $a\in \llbracket 1, n \rrbracket$ satisfying the assignment condition
	in (\ref{choose}) and denote this term by $P^o_d(x_1 \equiv a)$. The case with $B\in \llbracket n+1, 2n \rrbracket$ follows similarly.

	We define the number of the assignments of the index $a$ and $\bar a$ to a row and column index 
	of the Green function entries in the product, \ie
	\begin{align}\label{ncr}
		k^{(r)}_{0}=\#\{i: x_i \equiv a\}+\#\{i: x_i \equiv \bar {a}\}, \quad k^{(c)}_{0}=\#\{i: y_i \equiv a\}+\#\{i: y_i \equiv \bar a\}, 
	\end{align}
	with $k^{(r)}_{0} >k^{(c)}_{0}$ from (\ref{choose}), here we use the subscript $0$ to indicate this quantity is applied to the original term before iterations.

	Applying Lemma \ref{lemma:expand_mechanism}, if $k^{(c)}_{0}=0$, then the first type of subset $\mathcal A_d^o$ in (\ref{subset}) is empty. However if $k^{(c)}_{0} \geq 1$, we need to repeatedly invoke Lemma \ref{lemma:expand_mechanism} to eliminate resulting terms in non-empty $\mathcal A_d^o$. This is our first-level iteration procedure. In the first step, using Lemma \ref{lemma:expand_mechanism}, we have
	\begin{align}
		\big|\E[P_d^o(x_1 \equiv a)] \big| \lesssim &\sum_{P^o_{d'} \in{\mathcal{A}}^o_{d,1}} \big|\E [ P^o_{d'} ]\big|+	
		\sum_{P^o_{d'} \in {\mathcal{A}}^o_{>d,1}} \big|\E [ P^o_{d'} ]\big| \nonumber\\
		&+ \frac{1}{\sqrt{n}}\sum_{P^o_{d'} \in {\mathcal{B}}^o_{\ge d,1}} \big|\E [ P^o_{d'} ]\big| 
		+ \frac{1}{n}\sum_{P^o_{d'} \in {\mathcal{C}}^o_{\ge d-2,1}} \big|\E [ P^o_{d'} ]\big| +O_\prec(n^{-3/2}),
	\end{align}
	where we use the subscript $1$ in the four types of subsets to indicate the iteration step, and each term in the first subset ${\mathcal{A}}^o_{d,1}$ still has the unmatched index $a$ satisfying (\ref{choose}), with \cf (\ref{ncr}),
	$$k^{(r)}_{1}=k^{(r)}_0-1, \qquad k^{(c)}_{1}=k^{(c)}_0-1,  \qquad k^{(r)}_{1}>k^{(c)}_{1}.$$
	Hence we can further apply Lemma \ref{lemma:expand_mechanism} on these leading terms of degree $d$ in ${\mathcal{A}}^o_{d,1}$.  In general, in the $s$-th iteration step, we have
	\begin{align}
		\big|\E[P_d^o(x_1 \equiv a)] \big| \lesssim &\sum_{P^o_{d'} \in{\mathcal{A}}^o_{d,s}} \big|\E [ P^o_{d'} ]\big|+	
		\sum_{P^o_{d'} \in {\mathcal{A}}^o_{>d,s}} \big|\E [ P^o_{d'} ]\big| \nonumber\\
		&+ \frac{1}{\sqrt{n}}\sum_{P^o_{d'} \in {\mathcal{B}}^o_{\ge d,s}} \big|\E [ P^o_{d'} ]\big| 
		+ \frac{1}{n}\sum_{P^o_{d'} \in {\mathcal{C}}^o_{\ge d-2,s}} \big|\E [ P^o_{d'} ]\big| +O_\prec(n^{-3/2}),
	\end{align}
	where each term in the first subset ${\mathcal{A}}^o_{d,s}$ (if exists) satisfies 
	$$k^{(r)}_{s}=k^{(r)}_0-s, \qquad k^{(c)}_{s}=k^{(c)}_0-s,  \qquad k^{(r)}_{s}>k^{(c)}_{s}.$$
	We stop the iterations at step $s=k^{(c)}_{0}+1$ so that the resulting subset ${\mathcal{A}}^o_{d,s}$ is empty, 
	we hence obtain the following estimate  for $P_d^o=P_d^o(x_1 \equiv a)$;
	\begin{align}\label{iterate}
		\big|\E[P_d^o] \big| \lesssim &	
		\sum_{P^o_{d'} \in {\mathcal{A}}^o_{>d,\ast}} \big|\E [ P^o_{d'} ]\big| + \frac{1}{\sqrt{n}}\sum_{P^o_{d'} \in {\mathcal{B}}^o_{\ge d,\ast}} \big|\E [ P^o_{d'} ]\big| \nonumber\\
		&\qquad \qquad \qquad + \frac{1}{n}\sum_{P^o_{d'} \in {\mathcal{C}}^o_{\ge d-2,\ast}} \big|\E [ P^o_{d'} ]\big| +O_\prec(n^{-3/2}),
	\end{align}
	where the three  subsets, ${\mathcal{A}}^o_{>d,\ast},{\mathcal{B}}^o_{\ge d, \ast}$ and ${\mathcal{C}}^o_{\ge d-2,\ast}$ are 
	defined as the union of the corresponding subsets in (\ref{subset}) generated in the above $s$-iterations.
	Their precise form is irrelevant,  beyond their degree what  matters is that their
	cardinality can be bounded by some constant only depending on $d$.

	In this way, we have improved the estimate essentially by an additional small factor $\Psi=(n\eta)^{-1}$ 
	from (\ref{p_d_local}), \ie the first group of terms on the right side of (\ref{iterate}) has degree 
	at least $d+1$, while the remaining terms gain extra $n^{-1/2}$ or $n^{-1}$ from the prefactors. 
	The above iteration procedure generalizes the $\Psi$-improvement mechanism explained in
	the previous subsection for a concrete example. Moreover, we obtain a similar formula in 
	(\ref{iterate}) for $P_d^o=P_d^o(x_1 \equiv B)$ using the analogous version of 
	Lemma~\ref{lemma:expand_mechanism} for the upper case index $B$.

	Next, we will perform our second-level iteration, \ie iterating the $\Psi$-improvement 
	mechanism stated in (\ref{iterate}) to increase the degree further. We note that the resulting terms on the right 
	side of (\ref{iterate}) remain unmatched either with one $\Psi$-improvement from ${\mathcal{A}}^o_{>d,\ast}$ 
	(\ie with higher degrees) or with the gain from the prefactor $1/\sqrt{n}$ or $1/n$. 
	Iterating (\ref{iterate}) for $D-d$ times with a large fixed $D>0$ chosen later, we have	
	\begin{align}\label{dD}
		\big|\E[P_d^o] \big| \lesssim &	
		\sum_{P^o_{d'} \in {\mathcal{A}}^o_{>D,\ast}} \big|\E [ P^o_{d'} ]\big| +
		\frac{1}{\sqrt{n}}\sum_{P^o_{d'} \in {\mathcal{B}}^o_{\ge D,\ast}} \big|\E [ P^o_{d'} ]\big| \nonumber\\
		&\qquad \qquad \qquad + \frac{1}{n}\sum_{P^o_{d'} \in {\mathcal{C}}^o_{\ge D-2,\ast}} 
		\big|\E [ P^o_{d'} ]\big| +O_\prec(n^{-3/2}),
	\end{align}
	where the sets ${\mathcal{A}}^o_{>D,\ast}$, ${\mathcal{B}}^o_{\ge D,\ast}$ and ${\mathcal{C}}^o_{\ge D-2,\ast}$ 
	denote the union of the corresponding sets in (\ref{iterate}) generated in the second-level iterations, 
	whose cardinality can be bounded by a constant depending only on $d$ and $D$. 
	Using the naive estimate in (\ref{p_d_local}), we have
	\begin{align}
		\E[P_d^o]=O_\prec\big(\Psi^{D}+n^{-1/2} \Psi^{D-1}+n^{-1}\Psi^{D-3}+n^{-3/2}\big)=O_{\prec}(n^{-3/2}+\Psi^{D}).
	\end{align}
	For any $\eta \geq n^{-1+\epsilon}$
	with a fixed small $\epsilon>0$, we choose $D=\lfloor2/\epsilon\rfloor$ so
	that $\Psi^{D} \lesssim n^{-3/2}$. In particular, if we choose
	$\eta=\eta_0=n^{-7/8-\tau}$ (in fact, used to prove Lemma \ref{lemma0}), we can choose smaller $D=\lceil\frac{12}{1-8\tau}\rceil$ so that $\Psi^{D} \lesssim n^{-3/2}$.
	This  completes the proof of Proposition~\ref{lemma3}.
\end{proof}

\subsection{Proof of Lemma \ref{lemma:expand_mechanism}}
\label{sec:full}

Let $P_d^o(x_1 \equiv a)$ be a given term in $\mathcal{P}_d^o$ with an unmatched index $a$ satisfying (\ref{choose}) and without loss of generality $x_1 \equiv a$, $y_1 \neq a,\bar a$. Using the identity in (\ref{identity})
on the first Green function factor $\wh{\gz_{ay_1}}$ and performing the cumulant expansions as in (\ref{example_expand}), we have
\begin{align}\label{case1}
	\E[P_d^o(x_1 \equiv a)]=&
	-\frac{m^{z}}{n^{l+1}}\sum^{*}_{\mathcal{I}_{l_1,l_2}} \sum_{J} 
	\E \Big[\frac{\partial \prod_{i=2}^{d} \wh{\gz_{x_iy_i}}}{\partial w_{Ja}} G^{z}_{J y_1} \Big]+\frac{m^{z}}{n^{l}}\sum^{*}_{\mathcal{I}_{l_1,l_2}}
	\E \Big[\gz_{a y_1} \prod_{i=2}^{d} \wh{\gz_{x_iy_i}} \<\wh \gz \> \Big]\nonumber\\
	&-\frac{\mathfrak{m}^{z}}{n^{l+1}}\sum^{*}_{\mathcal{I}_{l_1,l_2}}\sum_{j} 
	\E \Big[\frac{\partial \prod_{i=2}^{d} \wh{ \gz_{x_iy_i}}}{\partial w_{j\bar a}} G^{z}_{j y_1} \Big]+\frac{\mathfrak{m}^{z}}{n^{l}}\sum^{*}_{\mathcal{I}_{l_1,l_2}} \E \Big[\gz_{\bar a y_1}
	\prod_{i=2}^{d} \wh{\gz_{x_iy_i}} \<\wh \gz \> \Big]
	\nonumber\\
	&+\sum_{p+q+1 \geq 3} \Big(H^{(1)}_{p+1,q}+H^{(2)}_{q,p+1}\Big),
\end{align}
where $H^{(1)}_{p+1,q}$ and $H^{(2)}_{q,p+1}$ are the higher order terms given by
\begin{align}\label{higher}
	H^{(1)}_{p+1,q}:=&- \frac{m^{z} c^{(p+1,q)} }{p!q! n^{\frac{p+q+1}{2}+l}}  \sum^{*}_{\mathcal{I}_{l_1,l_2}} \sum_{J} 
	\E \Big[\frac{\partial^{p+q} \prod_{i=2}^{d} \wh{\gz_{x_iy_i}}G^{z}_{J y_1}}{\partial w^{p}_{aJ} \partial w^q_{Ja}}  \Big];\nonumber\\
	H^{(2)}_{q,p+1}=	&- \frac{\mathfrak{m}^{z}c^{(q,p+1)} }{p!q! n^{\frac{p+q+1}{2}+l}}  \sum^{*}_{\mathcal{I}_{l_1,l_2}} 
	\sum_{j} \E \Big[\frac{\partial^{p+q} 
		\prod_{i=2}^{d} \wh{\gz_{x_iy_i}}G^{z}_{j y_1}}{\partial w^{p}_{\bar aj} \partial w^q_{j \bar a}}  \Big],
\end{align}
with $c^{(p,q)}$ the $(p,q)$-th cumulants of the normalized i.i.d. entries $\sqrt{n}w_{aB}$.
We will only estimate $H^{(1)}_{p+1,q}$ in (\ref{higher}) and $H^{(2)}_{p+1,q}$ can be treated similarly. 
We remark that the smallness of $m^{z}$ in (\ref{localm}) will not be used in the proof. We now split
of $H^{(1)}_{p+1,q}$ in (\ref{higher}) into the following two parts, \ie $J$ is distinct from or coinciding with the old indices in
$\mathcal{I}_{l_1,l_2}=\{v_k\}_{k=1}^{l_1} \cup \{V_k\}_{k=1}^{l_2}$ (omitting the irrelevant prefactor $c^{(p+1,q)}/p!q!$),
\begin{align}\label{summation}
	H^{(1)}_{p+1,q}=&- \frac{m^z}{n^{\frac{p+q+1}{2}+l}}  \sum^{*}_{\mathcal{I}_{l_1,l_2},J} \E[(\cdots)]
	- \frac{m^z}{n^{\frac{p+q+1}{2}+l}} \sum^{*}_{\mathcal{I}_{l_1,l_2}} 
	\sum_{J} \big( \sum_{k=1}^{l_1} \delta_{J \overline{v_k}} +\sum_{k=1}^{l_2} \delta_{J {V_k}}  \big) \E[(\cdots)] \nonumber\\
	=:&-\frac{m^z}{n^{\frac{p+q-1}{2}}} \Big(\frac{1}{n^{l+1}}   \sum^{*}_{\mathcal{I}_{l_1,l_2+1}} \E[(\cdots)]\Big) 
	-\frac{m^z}{n^{\frac{p+q+1}{2}}} \Big(\frac{1}{n^{l}}   \sum^{*}_{\mathcal{I}_{l_1,l_2}} \sum_{J} \delta_{J \in \mathcal{I}_{l_1,l_2}} \E[(\cdots)]\Big),
\end{align}
where the notation $\sum^{*}$ is given in (\ref{distinct_sum}) indicating that all the summation indices are distinct, and we use the short hand notation 
$\delta_{J \in \mathcal{I}_{l_1,l_2}}$ to indicate the part with index coincidence. 

We first estimate the second part in (\ref{summation}) with index coincidences $J \in \mathcal{I}_{l_1,l_2}$. Using that $|G_{\mathfrak{u} \mathfrak{v}}| \prec 1$ from (\ref{localg})
naively, we gain an additional $n^{-1}$ from the summation and have
\begin{align}\label{same}
	\left|	\frac{m^z}{n^{\frac{p+q+1}{2}}} \Big(\frac{1}{n^{l}}   \sum^{*}_{\mathcal{I}_{l_1,l_2}} \sum_{J} \delta_{J \in \mathcal{I}_{l_1,l_2}} \E[(\cdots)]\Big)\right|=\OO_\prec(n^{-\frac{p+q+1}{2}}).
\end{align}
We remark that for $p+q+1=3$, the error $n^{-3/2}$ is sharp in general. By setting $J=\overline{v_k}$ or $J=V_k$, the terms in (\ref{higher}) might switch to matched terms; see \eg the last two lines of (\ref{example_expand}) with $J=B$ or $j=\ud{B}$.

Next, the first part in (\ref{summation}) with distinct summation indices can be written as a linear combination of 
averaged products of shifted Green function entries of the form in (\ref{form}) with an additional factor
$n^{-\frac{p+q-1}{2}}$. Since $J$ is a fresh index, the number of (shifted) off-diagonal Green function entries remains at least $d$ in the product. If $q \neq p+1$, then from (\ref{rule_1}) and Definition \ref{def:unmatch_shift}, the fresh index $J$ becomes an unmatched index. Otherwise if $q=p+1$, the index $J$ is matched, but the index $a$ remains unmatched using (\ref{rule_1}) and Definition \ref{def:unmatch_shift}. Thus the first part of (\ref{summation}) yields a collection of 
unmatched terms of the form in (\ref{form}) with degrees at least $d$ and with an additional factor
$n^{-\frac{p+q-1}{2}}$. 
Similar estimates also apply to $H^{(2)}_{p+1,q}$ in (\ref{higher}). 

Therefore for the third and fourth order terms with $p+q+1=3,4$ in (\ref{higher}), we denote by ${\mathcal{B}}^o_{\ge d}\subset \bigcup_{d'\ge d}\mathcal{P}^o_{d'}$ and ${\mathcal{C}}^o_{\ge d}\subset \bigcup_{d'\ge d}\mathcal{P}^o_{d'}$, respectively, the set of the resulting unmatched terms of degrees at least $d$. With these notations and combining with (\ref{same}), we write for short that
\begin{align}\label{higher_terms}
	&\sum_{p+q+1= 3} \left(H^{(1)}_{p+1,q}+ H^{(2)}_{q,p+1}\right)=\frac{1}{\sqrt{n}}\sum_{P^o_{d'} \in {\mathcal{B}}^o_{\ge d}} \E [ P^o_{d'} ]+O_{\prec}(n^{-3/2});\nonumber\\
	&\sum_{p+q+1= 4} \left(H^{(1)}_{p+1,q}+ H^{(2)}_{q,p+1}\right)=\frac{1}{n}\sum_{P^o_{d'} \in {\mathcal{C}}^o_{\ge d}} \E [ P^o_{d'} ]+O_{\prec}(n^{-2}),
\end{align}
and we truncate the cumulant expansion at the fourth order with an error $O_\prec(n^{-3/2})$ using (\ref{localg}).

We next estimate the second order terms, \ie the first two lines of (\ref{case1}). Writing $\<\wh\gz\>$ as in (\ref{udG}) and $\gz_{a y_1}=\wh{\gz_{a y_1}}$ and $\gz_{\bar a y_1}=\wh{\gz_{\bar a y_1}}$ since $y_1 \not\equiv a$ and $\bar a$, the second and fourth term on the right side of (\ref{case1}) are of the form in (\ref{form}) and the degrees of these terms are increased to $d+1$. For the first term on the right side of (\ref{case1}), we split the summation into two parts as in (\ref{summation}). 
By direct computations, the part with index coincidences $J \in \mathcal{I}_{l_1,l_2}$ is given by
$$
-\frac{m^{z}}{n^{l+1}}\sum^{*}_{\mathcal{I}_{l_1,l_2}}  \E \Big[\frac{\partial \prod_{i=2}^{d}
	\wh{\gz_{x_iy_i}}}{\partial w_{Ja}} G^{z}_{J y_1}\delta_{J \in \mathcal{I}_{l_1,l_2}} \Big]
=\frac{m^{z}}{n^{l+1}}\sum^{*}_{\mathcal{I}_{l_1,l_2}}  \E \Big[\sum_{p=2}^{d} 
\big(\prod_{i=2}^{(p)} \wh{\gz_{x_iy_i}} \big) \gz_{x_p J} \gz_{a y_p} G^{z}_{J y_1}\delta_{J \in \mathcal{I}_{l_1,l_2}} \Big].
$$ 
If we set $J=\bar a$, then the index $a$ remains unmatched since the index $J$ appeared once as a row and once as a column of the Green function entries in the product. Otherwise if $J \in \mathcal{I}_{l_1,l_2}\setminus \{a\}$, then the index $a$ obviously remains unmatched. After transforming the Green function entries into their shifted versions using \eg $\gz_{aa}=m^{z}+\wh \gz_{aa}$, the degrees of the resulting terms might be decreased to $d-2$ when all the entries $\gz_{x_p J}, \gz_{a y_p}, G^{z}_{J y_1}$ are diagonal; see (\ref{example_decrease}) for a concrete example. Thus we obtain a collection of unmatched terms of degrees at least $d-2$ with a factor $n^{-1}$ from the index coincidence. Together with the subset ${\mathcal{C}}^o_{\ge d}$ in (\ref{higher_terms}) with the same prefactor $1/n$ from the fourth order cumulant expansion, we denote by ${\mathcal{C}}^o_{\ge d-2}$ the union of these unmatched terms with degrees at least $d-2$, \ie we write them together for short as 
\begin{align}\label{diagonal}
	\frac{1}{n}\sum_{P^o_{d'} \in {\mathcal{C}}^o_{\ge d-2}} \E [ P^o_{d'} ].
\end{align}

For the remaining summation with $J$ distinct from the indices in $\mathcal{I}_{l_1,l_2}$, writing $\gz_{Jy_1}=\wh{\gz_{Jy_1}}$ and $\gz_{x_p J}=\wh{\gz_{x_p J}}$, we have
$$
-\frac{m^{z}}{n^{l+1}}\sum^{*}_{\mathcal{I}_{l_1,l_2},J} 
\E \Big[\frac{\partial \prod_{i=2}^{d} \wh{\gz_{x_iy_i}}}{\partial w_{Ja}} G^{z}_{J y_1} \Big]
=\frac{m^{z}}{n^{l+1}}\sum^{*}_{\mathcal{I}_{l_1,l_2},J} 
\E \Big[\sum_{p=2}^{d} \big(\prod_{i=2}^{(p)} \wh{\gz_{x_iy_i}} \big) \wh{\gz_{x_p J}} \gz_{a y_p} \wh{G^{z}_{J y_1}} \Big].
$$	
If $y_p \not\equiv a, \bar a$, then $\gz_{a y_p}$ from acting $\partial w_{Ja}$ on $G_{a y_p}$ is an extra off-diagonal entry and the degree is thus increased to $d+1$.
Otherwise if there exists some $y_p \equiv a$ or $\bar a$, then the resulting diagonal entry $G_{a a}$ or $G_{a \bar a}$ 
which will be replaced with the deterministic function $m^z$ or $\mathfrak{m}^{z}$. In both cases, the index $a$ remains unmatched. Then we have
\begin{align}\label{m_1_term}
	-\frac{m^z}{n^{l+1}}\sum_{\mathcal{I}_{l_1,l_2},J}^* 
	\E \Big[&\frac{\partial \prod_{i=2}^{d} \wh{\gz_{x_iy_i}}}{\partial w_{Ja}} G^{z}_{J y_1} \Big] 
	=(m^{z})^2 \sum_{i\geq 2: y_i \equiv a} \E[P^o_d(x_1, y_i \rightarrow J)]\nonumber\\
	& \qquad +m^z \mathfrak{m}^{z} \sum_{i\geq 2: y_i \equiv \bar a} \E[P^o_d(x_1, y_i \rightarrow J) ]
	+\sum_{P^o_{d'} \in \mathcal{P}^o_{d+1}} \E[P^o_{d'}],
\end{align}
where $P^o_d(x_1, y_i \rightarrow J)$ given in (\ref{replacement}) denotes a term obtained from the original term $P_d^{o}$ 
with the row index $x_1 \equiv a$ and column index $y_i \equiv a$ or $\bar a$ of 
the Green function entries being replaced with a fresh (averaged) summation index $J$, and with a slight abuse of notations, the last sum denotes a specific linear combination of unmatched terms with higher degree $d+1$.

Similarly we estimate the third term on the right side of (\ref{case1}). For the cases with index 
coincidences $\delta_{j \in \mathcal{I}_{l_1,l_2}}$, we obtain unmatched terms with degree 
at least $d-2$ and with a factor $n^{-1}$ which will be added to (\ref{diagonal}). 
For the cases with distinct summation indices, we have \cf (\ref{m_1_term})
\begin{align}\label{m_2_term}
	-\frac{\mathfrak{m}^{z}}{n^{l+1}}\sum^{*}_{\mathcal{I}_{l_1,l_2},j} 
	\E \Big[\frac{\partial \prod_{i=2}^{d} \wh{\gz_{x_iy_i}}}{\partial w_{j \bar a}} 
	&G^{z}_{j y_1} \Big]=|\mathfrak{m}^{z}|^2\sum_{i\geq 2: y_i \equiv a} \E[P^o_d(x_1, y_i \rightarrow j)] \nonumber\\
	& +m^z \mathfrak{m}^{z} \sum_{i\geq 2: y_i \equiv \bar a} \E[P^o_d(x_1, y_i \rightarrow j)]
	+\sum_{P^o_{d'} \in \mathcal{P}^o_{d+1}} \E[P^o_{d'}].
\end{align}
The collection of all the unmatched terms with higher degree $d+1$ in both (\ref{m_1_term}) and (\ref{m_2_term}) is denoted by 
${\mathcal{A}}^o_{>d}$. Moreover, the collection of the leading terms of degree $d$ (if exists) defined
by index replacements in both (\ref{m_1_term}) and (\ref{m_2_term}) is denoted by ${\mathcal{A}}^o_{d}$. 
We note that, for any term in ${\mathcal{A}}^o_{d}$, from the index replacement defined in (\ref{replacement}), 
the number of $a/\bar a$-assignments as a row and column index of the Green function entries has been reduced by one,
respectively.

To sum up, with the above notations, combining (\ref{case1}), (\ref{higher_terms}), (\ref{diagonal}), (\ref{m_1_term}) and (\ref{m_2_term}),  we have proved (\ref{aa}) and hence finish the proof of Lemma \ref{lemma:expand_mechanism}.  \qed

\begin{remark}
	Though here we present only the expansions starting from an off-diagonal Green function entry $\wh{\gz_{a y_1}}$, a similar expansion also holds true if we start from a diagonal entry $\wh{\gz_{a a}}$. 
	We remark that the above expansion is not unique since it depends on the choice of the Green 
	function entry to start performing expansions. The proof of Proposition \ref{lemma3}, however,
	does not rely on the uniqueness of the expansions. 
\end{remark}

\section{Green function comparison in Girko's formula: Proof of Proposition~\ref{gft}}
\label{sec:GFTGFT}

Recall the matrix flow in (\ref{flow}). To prove Proposition \ref{gft}, it then suffices to show the following:
\begin{lemma}\label{lemma}
	Set $\eta_0=n^{-7/8-\tau}$ with a small fixed $\tau>0$ from (\ref{omega_0}) and $T=n^{100}$. Let $f=f^{-}_1$ or $f^{+}_2$. 
	Then there exists some constant $c>0$ such that 
	\begin{align}\label{one_point_result}
		\frac{\dd }{\dd t}	\int_{\C} \Delta f(z) \E \left[ \int_{\eta_0}^T \Im \Tr G_t^{z}(\ii \eta) \dd \eta \right]\dd^2 z=\OO(n^{-c}),
	\end{align}
	and
	\begin{align}\label{two_point_result}
		\frac{\dd}{\dd t}	\int_{\C} \int_{\C} &\Delta f(z_1){ \Delta f(z_2) } \E\bigg[ \int_{\eta_0}^T\int_{\eta_0}^T 
		\Big( (1-\E) \Im\Tr G_t^{z_1}(\ii \eta_1)\Big) \times \nonumber\\
		&\Big(  (1-\E) \Im \Tr G_t^{z_2}(\ii \eta_2)\Big)  \dd \eta_1 \dd \eta_2 \dd^2 z_1 \dd^2 z_2\bigg]=\OO(n^{-c}).
	\end{align}
\end{lemma}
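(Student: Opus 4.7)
The strategy is to differentiate along the OU flow~\eqref{flow}, apply Itô's formula, and then perform a cumulant expansion of the entries of $X_t$ up to some large fixed order $K_0$; by our choice of drift, the first- and second-order contributions cancel exactly, so that
\begin{align*}
\frac{\dd}{\dd t}\E\bigl[\Im\Tr G_t^z(\ii\eta)\bigr]
=\!\!\sum_{p+q+1=3}^{K_0}\!\!\frac{c^{(p+1,q)}}{p!q!\,n^{(p+q+1)/2}}\!\sum_{a,B}\!\E\Bigl[\partial_{w_{aB}}^{p+1}\partial_{\overline{w_{aB}}}^{q}\Im\Tr G_t^z(\ii\eta)\Bigr]+(\mathrm{c.c.})+\OO_\prec(n^{-K_0/2+C}),
\end{align*}
with an analogous expansion for the two-point quantity in~\eqref{two_point_result} obtained by writing $(1-\E)A\cdot(1-\E)B=AB-\E[A]B-A\E[B]+\E[A]\E[B]$, using Leibniz for Itô's formula, and noting that the ``disconnected'' contributions in which all derivatives of the cumulant expansion fall on a single factor are exactly cancelled by the corresponding product-rule terms in $\frac{\dd}{\dd t}(\E[A]\E[B])$.

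For the one-point statement~\eqref{one_point_result}, I will fix $z,\eta$ and bound each $p+q+1\ge 3$ cumulant term by decomposing the threefold sum over $a,B$ into a distinct-index part, in which the indices $a,B$ are unmatched in the sense of Definition~\ref{def:unmatch_shift} (after rewriting diagonal entries as $\wh G=G-M^z$), and a coincidence part (e.g.\ $a=\ud B$), which gains a factor $1/n$ from the reduced summation and is handled by the local law~\eqref{localg}. The distinct-index part is controlled by the iterative unmatched-index mechanism of Proposition~\ref{lemma3}, giving an $\OO_\prec(n^{-3/2})$ bound per term pointwise in $z,\eta$ for $\eta\in[\eta_0,n^{-3/4-\epsilon}]$. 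For larger $\eta$ I split the integral further: on the bulk range $[n^{-3/4-\epsilon},1]$ the naive per-term bound $\Psi^d=(n\eta)^{-d}$ with $d\ge 4$ suffices, while on the tail $[1,T]$ I use the a priori bound $\norm{G^z(\ii\eta)}\le 1/\eta$, which inserted into the differentiation rules~\eqref{rule_1} provides an extra $\eta^{-(p+q+2)}$ decay, rendering the integral up to $T=n^{100}$ convergent. Multiplying by $\norm{\Delta f}_1\lesssim n^{1/4+\tau/2}\sqrt{\log n}$ from~\eqref{deltaf} then yields the bound $\OO(n^{-c})$.

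For the two-point statement~\eqref{two_point_result}, the derivative produces mixed products involving both $G^{z_1}(\ii\eta_1)$ and $G^{z_2}(\ii\eta_2)$ coupled through shared cumulant indices. By the cancellation above, only connected terms survive, i.e.\ those with at least one derivative on each of the two factors; each such term is a sum of products of shifted Green-function entries with parameters $z_1$ and $z_2$, which by Remark~\ref{remark_unmatch} are still amenable to the unmatched-index expansion of Proposition~\ref{lemma3} (the argument is insensitive to different $z$-parameters as long as $m^{z_j}=\OO(1)$). This yields $\OO_\prec(n^{-3/2})$ per term pointwise in $(z_1,z_2,\eta_1,\eta_2)$ in the atypical regime, combined with the tail decay $\eta_j^{-1}$ at large $\eta_j$; multiplying by the two factors $\norm{\Delta f}_1$ (total $\lesssim n^{1/2+\tau}\log n$) and integrating in $\eta_1,\eta_2$ gives the required $\OO(n^{-c})$.

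The principal obstacle is the large-$\eta$ tail up to $T=n^{100}$: bare application of the third-order bound $\OO_\prec(n^{-3/2})$ uniformly in $\eta$ would produce a catastrophically divergent integral, so it is essential to track, at every stage of the cumulant expansion, the $\eta^{-1}$-decay of each off-diagonal resolvent entry for $\eta\gtrsim 1$. A secondary technical nuisance is that the monotonicity trick used in~\eqref{middle_step} is unavailable here because the derivative $\frac{\dd}{\dd t}$ does not respect positivity of $\Im\Tr G$; consequently, for the two-point contribution I expect to organize the tail estimate by a direct dyadic decomposition of $[1,T]$ rather than by monotonicity. All other ingredients (local law along the flow, cumulant truncation at $K_0$, unmatched-index mechanism) are by now standard after Section~\ref{sec:prop}.
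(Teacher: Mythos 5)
There is a genuine gap, and it begins at the very first step. The paper never expands $\frac{\dd}{\dd t}\E[\Im\Tr G_t^z(\ii\eta)]$ pointwise in $\eta$ and then integrates; it differentiates the $\eta$-integrated quantity $\F_t^z=\int_{\eta_0}^T\Im\Tr G_t^z(\ii\eta)\dd\eta$ and uses the telescoping identity \eqref{rule_2}, $\partial\F_t^z/\partial w_{aB}=\ii\int_{\eta_0}^T((G^z)^2(\ii\eta))_{Ba}\dd\eta=-G^z_{Ba}(\ii\eta_0)+\OO(n^{-100})$, which collapses the entire $\eta$-integral after a single differentiation. Every subsequent cumulant term is then a product of resolvent entries evaluated at the single scale $\eta_0$, where $\Psi=(n\eta_0)^{-1}=n^{-1/8+\tau}$ and Proposition~\ref{lemma3} applies. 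Your ``principal obstacle'' --- the tail up to $T=n^{100}$ --- therefore does not arise, but your substitute route does not close: on the bulk range $\eta\in[n^{-3/4-\epsilon},1]$ the entrywise error in \eqref{isotropic} is $n^{-1/2}\eta^{-1/3}$ rather than $(n\eta)^{-1}$, the unmatched-index machinery of Section~\ref{sec:prop} is only set up for $\eta\le n^{-3/4-\epsilon}$, and the naive bound on the third-order terms of $\frac{\dd}{\dd t}\E[\Tr G]$ does not integrate to something $o(\|\Delta f\|_1^{-1})$ over that range. So the pointwise-in-$\eta$ approach would need substantial new estimates that your sketch does not supply.

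For the two-point bound there are three further gaps. First, the ``disconnected'' terms are not exactly cancelled: subtracting $\frac{\dd}{\dd t}(\E[A]\E[B])$ leaves the covariances $\E[(\partial^k\F^{z_1}_t)\,\wh{\F^{z_2}_t}]$, which are precisely the first lines of \eqref{third_order_term} and \eqref{fourth_order_term} and must be estimated; the paper does this by extending the unmatched form to allow a centered $\wh{\F}$ prefactor as in \eqref{form_F_ab}. Second, and most seriously, the third-order terms with index coincidence $a=\ud{B}$ produce the deterministic leading terms $n^{-1/2}\ma(\mb)^2$ of \eqref{M_term_error}; pointwise these are of size $n^{-1/2}$, which after multiplication by $\|\Delta f\|_1^2\sim n^{1/2+\tau}$ is not $o(1)$, so your blanket claim of an $\OO_\prec(n^{-3/2})$ per-term bound is false for them. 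The paper only controls these after the $z_1,z_2$-integration, using $\mathfrak m^z=-z+\OO(n^{-1/2+\tau})$ and the harmonicity of $z$ and $z^2$, so that $\int\Delta f(z)\,z\,\dd^2z=0$; this cancellation, encoded in \eqref{delta_int}, is indispensable and absent from your argument. Third, the fourth-order terms with $p=1,q=2$ are matched, so Proposition~\ref{lemma3} gives nothing for them; the paper bounds them via the Ward identity together with the improved expectation bound \eqref{im_estimate} at $\eta_0$, which is itself the output of the small-$\eta$ comparison of Proposition~\ref{prop1}. Your proposal addresses none of these points.
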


\begin{proof}[Proof of Proposition~\ref{gft}]
	Integrating the bounds from Lemma~\ref{lemma} over $t \in [0, 100\log n]$ and using  standard 
	perturbation theory as in (\ref{t_0toinf}) we conclude the proof of Proposition~\ref{gft}. 
\end{proof}

\subsection{Expectation estimate: Proof of (\ref{one_point_result})}\label{sec:exp}
We introduce the short-hand notation
\begin{align}\label{imaginary}
	\F^{z}_t:=\int_{\eta_0}^T \Im \Tr G_t^{z}(\ii \eta) \dd \eta=-\ii \int_{\eta_0}^T \Tr G_t^{z}(\ii \eta) \dd \eta,
\end{align}
and we will prove a slightly stronger estimate than needed in (\ref{one_point_result}), \ie
\begin{align}\label{middle2}
	\left|	\int_{\C} \Delta f(z) \Big( \frac{\dd }{\dd t}	\E\big[\F_t^{z}\big] \Big) \dd^2 z \right| =\OO( n^{-1/4+5\tau}).
\end{align}
Using the $L^1$ norm of $\Delta f$ in (\ref{deltaf}), it suffices to show
\begin{align}\label{middle}
	\Big|\frac{\dd }{\dd t}	\E\big[\F_t^{z}\big]\Big| =\OO_\prec( n^{-1/2+4\tau}).
\end{align}

Recall the matrix flow in (\ref{flow}). Using Ito's formula and performing the cumulant expansion on
the expectation of the drift term, we obtain the analogue of (\ref{ex_deri}),
\begin{align}\label{third_fourth}
	\frac{\dd \E\big[\F_t^{z}\big]}{\dd t}
	=&-\frac{1}{2} \sum_{a=1}^n \sum_{B=n+1}^{2n}
	\left( \sum_{p+q+1= 3}^{K_0}\frac{c^{(p+1,q)}}{p!q!n^{\frac{p+q+1}{2}}}
	\E \left[   \frac{\partial^{p+q+1} \F_t^{z}}{\partial w_{aB}^{p+1} \partial \overline{w_{aB}}^{q} } \right] \right)\nonumber\\
	&-\frac{1}{2} \sum_{a=1}^n \sum_{B=n+1}^{2n} \left(\sum_{p+q+1 = 3}^{K_0}\frac{c^{(q,p+1)}}{p!q!n^{\frac{p+q+1}{2}}}
	\E \left[   \frac{\partial^{p+q+1} \F^z_{t}}{\partial \overline{w_{aB}}^{p+1} \partial w_{aB}^{q}}\right]\right)+O_\prec(n^{-\frac{K_0}{2}+2}),
\end{align}
with $K_0=100$ and $c^{(p,q)}$ the $(p,q)$-th cumulants of the normalized i.i.d. entries $\sqrt{n}w_{aB}$. It then suffices to consider the first line of (\ref{third_fourth}) to show
\begin{align}\label{K_pq}
	\sum_{p+q+1=3}^{K_0} K^{z}_{p+1,q}:=\sum_{p+q+1=3}^{K_0}\frac{c^{(p+1,q)}}{2p!q!}\left(\frac{1}{n^{\frac{p+q+1}{2}}} \sum_{a,B}  
	\E \left[    \frac{\partial^{p+q+1} \F_t^{z}}{\partial w_{aB}^{p+1} \partial \overline{w_{aB}}^{q} } \right]  \right)
	=\OO_\prec( n^{-1/2+4\tau}),
\end{align}
and the second line of (\ref{third_fourth}) is the same with $a$ and $B$ interchanged.

Recall the differentiation rule in (\ref{rule_1}). We further have 
\begin{align}\label{rule_2}
	\frac{\partial \F_t^{z}}{\partial w_{aB}}=\ii \int_{\eta_0}^T  \sum_{\mathfrak{v}=1}^{2n} 
	\Big( G^z_{\mathfrak{v}a} (\ii \eta)G^z_{B\mathfrak{v}}(\ii \eta)\Big) \dd \eta
	=&\ii \int_{\eta_0}^T \big((G^z)^2(\ii \eta)\big)_{Ba} \dd \eta\nonumber\\
	=&G^{z}_{Ba}(\ii T) - G^{z}_{Ba}(\ii \eta_0) = -\gz_{Ba}(\ii \eta_0)+\OO(n^{-100}),
\end{align}
where we used that $(G^2)(\ii \eta)=-\ii \frac{\dd G(\ii \eta)}{\dd \eta}$, the deterministic norm bound $\|G(\ii T)\|\leq T^{-1}$ with $T=n^{100}$. By direct computations using (\ref{rule_1}) and (\ref{rule_2}), 
each term $K^{z}_{p+1,q}$ given in (\ref{K_pq}) is a linear combination of products of $p+q+1$ 
Green function entries of the following form
\begin{align}\label{F_1}
	\frac{1}{n^{\frac{p+q+1}{2}}}  \sum_{a,B}  \E \Big[  \prod_{i=1}^{p+q+1} \gz_{x_i,y_i} (\ii \eta_0)
	+O(n^{-100})\Big]=\OO_\prec\Big( n^{-\frac{p+q-3}{2}}(\Psi^{p+q+1}+n^{-1})\Big)
\end{align}
where $x_i, y_i\equiv a$ or $B$ satisfying 
the assignment condition in (\ref{pq_number}), and the last estimate follows from the local law in (\ref{localg})
and (\ref{localm}) with $\Psi=(n\eta_0)^{-1}=n^{-1/8+\tau}$. We remark that the error term $n^{-1}$ is from the cases with index coincidence, \ie $a = \ud{B}$. In particular, we have from (\ref{F_1}) that
\begin{align}\label{ex4}
	|K^{z}_{p+1,q}|\prec n^{-1/2+4\tau}, \qquad p+q+1\geq 4,
\end{align}
which is enough to prove (\ref{middle}) except for the third order terms. 

We next improve the estimate for these third order terms in (\ref{K_pq}) with $p+q+1=3$. Transforming the Green function entries in these terms to their shifted versions by (\ref{shifted}), these third order terms with the summation restriction $a\neq \ud{B}$ are of the form in (\ref{form}) with a factor $n^{1/2}$ and with unmatched indices $a$ and $B$ from Definition~\ref{def:unmatch_shift}. Using Proposition~\ref{lemma3}, these unmatched terms with a factor $\sqrt{n}$ can be bounded by $O_\prec(n^{-1})$. For the remaining summation with the index coincidence $a =\ud{B}$, they can be bounded by $\OO_\prec(n^{-1/2})$ using that $|G_{\mathfrak{u} \mathfrak{v}}| \prec 1$ from (\ref{localg}). Therefore, the third order terms in (\ref{K_pq}) can be bounded by
\begin{align}\label{third_estimate_two}
	\sum_{p+q+1=3} K_{p+1,q}^{z} =
	O_\prec(n^{-1/2}).
\end{align}
Combining (\ref{third_fourth}), (\ref{ex4}) and (\ref{third_estimate_two}), we have proved the expectation estimate in (\ref{middle}).

\subsection{Variance estimate: Proof of (\ref{two_point_result})}\label{sec:var}
We start with the short-hand notation, $j=1,2$
\begin{align}\label{F}
	\wh{ \F_t^{z_j}} :=\F_t^{z_j}-\E[\F_t^{z_j}]=-\ii \int_{\eta_0}^T \Big( \Tr G_t^{z_j}(\ii \eta)
	-\E\big[ \Tr G_t^{z_j}(\ii \eta)\big]\Big) \dd \eta \prec 1,
\end{align}
where the last estimate follows from the local law in (\ref{localg}). We will prove a slightly stronger
estimate than needed in (\ref{two_point_result}), i.e. we will prove
\begin{align}\label{aim_two}
	\int_{\C} \int_{\C} \Delta f(z_1){ \Delta f(z_2) } 
	\Big( \frac{\dd}{\dd t}\E\big[\wh{ \F_t^{z_1}} \wh{\F_t^{z_2}}\big]\Big) \dd^2 z_1 \dd^2 z_2 =\OO(n^{-1/8+3\tau}).
\end{align}
Using the $L^1$ norm of $\Delta f$ in (\ref{deltaf}), it suffices to prove that
\begin{align}\label{middle_two}
	\frac{\dd}{\dd t}\E\big[\wh{ \F_t^{z_1}} \wh{\F_t^{z_2}}\big]=\mbox{M-terms}(z_1,z_2)+\OO(n^{-5/8+\tau}),
\end{align}
where M-terms$(z_1,z_2)$ is a deterministic function 
satisfying the following integral condition
\begin{align}\label{delta_int}
	\left|\int_{\C} \int_{\C} \Delta f(z_1){ \Delta f(z_2) }~ \mbox{M-terms}(z_1,z_2) ~ \dd^2 z_1 \dd^2 z_2 \right|\ll n^{-1/8+3\tau}.
\end{align}

Now we focus on proving (\ref{middle_two}). Using Ito's formula and performing the cumulant expansion on the drift term, we obtain the analogue of (\ref{third_fourth})
\begin{align}\label{deri_var}
	\frac{\dd	\E\big[ \wh{ \F_t^{z_1}} \wh{\F_t^{z_2}} \big]}{\dd t}
	=&-\frac{1}{2} \sum_{a=1}^n\sum_{B=n+1}^{2n}\left( \sum_{p+q+1=3}^{K_0}    \frac{c^{(p+1,q)}}{p!q!n^{\frac{p+q+1}{2}}}   \E \left[  \frac{\partial^{p+q+1} \wh{\F_t^{z_1}} \wh{\F_t^{z_2}}}{\partial w_{aB}^{p+1} \partial \overline{w_{aB}}^{q} }\right] \right)\nonumber\\
	&-\frac{1}{2} \sum_{a=1}^n\sum_{B=n+1}^{2n} \left(\sum_{p+q+1=3}^{K_0}   \frac{c^{(q,p+1)}}{p!q!n^{\frac{p+q+1}{2}}}   \E \left[  \frac{\partial^{p+q+1} \wh{\F_t^{z_1}} \wh{\F_t^{z_2}}}{\partial \overline{w_{aB}}^{p+1} \partial w_{aB}^{q} }\right] \right)+O_\prec(n^{-\frac{K_0}{2}+2}),
\end{align}
with $K_0=100$ and $c^{(p,q)}$ the $(p,q)$-th cumulants of the normalized i.i.d. entries $\sqrt{n}w_{aB}$. It then suffices to consider the first line of (\ref{deri_var}) to show
\begin{align}\label{pq_order_terms}
	\sum_{p+q+1=3}^{K_0}  \mathcal{K}^{z_1,z_2}_{p+1,q}:=&\sum_{p+q+1=3}^{K_0}\frac{c^{(p+1,q)}}{2p!q!}\left(  \frac{1}{n^{\frac{p+q+1}{2}}}  \sum_{a,B} \E \left[  \frac{\partial^{p+q+1} \wh{\F_t^{z_1}} \wh{\F_t^{z_2}}}{\partial w_{aB}^{p+1} \partial \overline{w_{aB}}^{q} }\right]\right),
\end{align}
and the second line of (\ref{deri_var}) is the same with $a$ and $B$ interchanged.

Using the differentiation rules in (\ref{rule_1}) and (\ref{rule_2}), each term $\mathcal{K}^{z_1,z_2}_{p+1,q}$ 
in (\ref{pq_order_terms}) is a linear combination of products of $p+q+1$ Green function entries 
(either $\ga$ or $\gb$) with a possible factor $\wh{\F_t^{z_1}}$ or $\wh{\F_t^{z_2}}$, \ie in the following general form
\begin{align}\label{some_form_F}
	\frac{1}{n^{\frac{p+q+1}{2}}}  \sum_{a,B}  \E \Big[  (\wh{\F_t^{z^{(0)}}}) \prod_{i=1}^{p+q+1} G^{z^{(i)}}_{x_i,y_i} (\ii \eta_0)+\OO(n^{-100})\Big],
\end{align}
with $\{z^{(i)} \in \C\}_{i=0}^{p+q+1}$ being either $z_1$ or $z_2$, and $x_i,y_i \equiv a$ 
or $B$ satisfying the assignment condition in (\ref{pq_number}).
Using the local law in (\ref{localg}), (\ref{localm}) and that $|\F_t^{z}| \prec 1$, we have the following naive bound
\begin{align}\label{naive_g_two}
	|\mathcal{K}^{z_1,z_2}_{p+1,q}|=O_{\prec}\big(n^{-\frac{p+q-3}{2}}(\Psi^{p+q+1}+n^{-1})\big),\qquad \Psi=n^{-1/8+\tau},
\end{align}
where the error $n^{-1}$ is from the cases with index coincidence, \ie $a = \ud{B}$. In particular we have 
\begin{align}\label{var5}
	|\mathcal{K}^{z_1,z_2}_{p+1,q}| \prec n^{-9/8+5\tau}, \qquad p+q+1 \geq 5,
\end{align}
which is sufficiently small to prove (\ref{middle_two}) except for the third and fourth order terms with $p+q+1=3,4$. 

We next estimate the third and fourth order terms more carefully. 
In the following, we will drop the argument $\ii \eta_0$ with $\eta_0=n^{-7/8-\tau}$ for notational 
simplicity and ignoring the error $\OO(n^{-100})$ in (\ref{some_form_F}).

\subsubsection{Third order terms}

By direct computations using (\ref{rule_1}) and (\ref{rule_2}), the third order terms 
$\mathcal{K}^{z_1,z_2}_{p+1,q}$ in (\ref{pq_order_terms}) with $p+q+1=3$ are given by linear combinations of the following terms (ignoring the irrelevant $c^{(p,q)}$ coefficients)
\begin{align}\label{third_order_term}
	&\frac{1}{n^{3/2}}\sum_{a,B}\E \Big[ \wh{ \F_t^{z_2}} G^{z_1}_{aa} G^{z_1}_{BB}G^{z_1}_{aB} \Big],
	\qquad \frac{1}{n^{3/2}}\sum_{a,B}\E \Big[ \wh{ \F_t^{z_2}} G^{z_1}_{aB} G^{z_1}_{aB}G^{z_1}_{aB} \Big],\nonumber\\
	&\frac{1}{n^{3/2}} \sum_{a,B} \E \Big[ \ga_{aB} \gb_{aa}\gb_{BB}\Big], \quad 
	\frac{1}{n^{3/2}} \sum_{a,B} \E \Big[ \ga_{aB}\gb_{aB} \gb_{aB} \Big], \quad 
	\frac{1}{n^{3/2}} \sum_{a,B} \E \Big[ \ga_{aB} \gb_{Ba} \gb_{Ba} \Big],
\end{align}
together with the other terms by interchanging $a$ with $B$ and $z_1$ with $z_2$.  

We first consider the terms in (\ref{third_order_term}) with the index coincidence $B= \bar a$ in the summations. The resulting terms except
from the last two terms in (\ref{third_order_term}) can be bounded by $\OO_\prec(n^{-1/2}\Psi)=\OO_\prec(n^{-5/8+\tau})$
using the local law in (\ref{localg}), (\ref{localm}) and that $\wh{\F^{z}_t}$ is centered.
For the last two terms in (\ref{third_order_term}) consisting of factors $G_{aB}$ and $G_{Ba}$ only,
we have from the local law in (\ref{localg}) that
\begin{align}\label{M_term_error}
	&\frac{1}{n^{3/2}} \sum_{a = \ud{B}} \E \Big[ \ga_{aB}\gb_{aB} \gb_{aB} \Big]
	= \frac{1}{\sqrt{n}} \ma \big(\mb\big)^2+\OO_\prec(n^{-5/8+\tau}), \nonumber\\ 
	&\frac{1}{n^{3/2}} \sum_{a = \ud{B}} \E \Big[ \ga_{aB} \gb_{Ba} \gb_{Ba} \Big]
	= \frac{1}{\sqrt{n}} \ma \big(\overline{\mb}\big)^2+\OO_\prec(n^{-5/8+\tau}),
\end{align}
where the leading deterministic functions can only be bounded by $\OO(n^{-1/2})$.  
However from (\ref{m_1m_2}) and (\ref{m_1}), for any $z \in \mathrm{supp}(f^{-}_1) \cup \mathrm{supp}(f^{+}_2)$, we have 
\begin{align}\label{m_delta}
	\mathfrak{m}^{z}=-z+\frac{z \eta}{\eta+\Im m^{z}}=-z+O\left(|z|(|1-|z|^2|+\eta^{2/3})\right)=-z+ \OO(n^{-1/2+\tau}).
\end{align}
Hence the leading deterministic terms in (\ref{M_term_error}) satisfy the intergral condition in (\ref{delta_int}), \ie
for simplicity, we only consider the first term in (\ref{M_term_error}),
\begin{align}
	&\int_{\C} \int_{\C} \Delta f(z_1) \Delta f(z_2) \left( \frac{1}{\sqrt{n}}  \ma     (\mb)^2\right)  \dd^2 z_1 \dd^2 z_2 \nonumber\\
	=&\frac{1}{\sqrt{n}}\left( \int_{\C} \Delta f(z_1) \big(-z_1+\OO(n^{-1/2+\tau})\big) \dd^2 z_1 \right)\left( \int_{\C}  
	\Delta f(z_2)\big((z_2)^2 +\OO(n^{-1/2+\tau})\big)  \dd^2 z_2 \right)\nonumber\\
	=&\OO(n^{-1+4\tau}), 
\end{align}
where we used (\ref{m_delta}), the $L^1$ norm of $\Delta f$ in (\ref{deltaf}),
and that both $z$ and $(z)^2$ are harmonic functions.

Next we study the remaining summations with the restriction $B \neq \bar a$ in (\ref{third_order_term}). 
In contrast to the form of averaged products of shifted Green function entries in (\ref{form}), 
we introduce a slightly different abstract form to adapt this notation to the terms in (\ref{third_order_term}), \ie with a possibe factor $\wh{\F_t^{z_1}}$ or $\wh{\F_t^{z_2}}$,
\begin{align}\label{form_F_ab}
	(\wh{\F_t^{z^{(0)}}}) \frac{1}{n^{l}}  \sum^*_{\mathcal{I}_{l_1,l_2}}   \prod_{i=1}^{d} \wh{G^{z^{(i)}}_{x_i,y_i}},\qquad l=l_1+l_2,
\end{align}
with each $\{z^{(i)} \in \C\}_{i=0}^{p+q+1}$ being either $z_1$ or $z_2$, where the restricted sum $\sum^*_{\mathcal{I}_{l_1,l_2}}$ is defined in (\ref{distinct_sum}), and we assign a summation index $v_j$ or $V_j \in \mathcal{I}_{l_1,l_2}$ or their conjugates $\overline{v_j}, \ud{V_j}$ 
to each row index $x_i$ and column index $y_i$ of the shifted Green function entries in the product. We also define unmatched indices and unmatched terms of the form in (\ref{form_F_ab}) as in Definition \ref{def:unmatch_shift}. Since the proof of Proposition \ref{lemma3} is not sensitive to the modifications in the abstract form, the statement still holds true for the general form in (\ref{form_F_ab}). We omit the proof details.

Provided the assignment condition in (\ref{pq_number}) with $p+q+1=3$, all the third order terms in (\ref{third_order_term}) with restricted summations $B \neq \bar a$ can be tranformed by (\ref{shifted}) to linear combinations of unmatched terms of the form in (\ref{form_F_ab}) with a factor $\sqrt{n}$.  Thus by analogous Proposition \ref{lemma3} for general unmatched term in (\ref{form_F_ab}), we have
\begin{align}\label{third_two_point_off}
	\sum_{p+q+1=3} \mathcal{K}^{z_1,z_2}_{p+1,q} \Big|_{a\neq \ud{B}}= \OO_\prec(n^{-1}). 
\end{align}

Therefore, combining (\ref{M_term_error}) and (\ref{third_two_point_off}), we have 
\begin{align}\label{var3}
	\sum_{p+q+1=3} \mathcal{K}^{z_1,z_2}_{p+1,q} = \mbox{M-terms}(z_1,z_2) +\OO_\prec(n^{-5/8+\tau}),
\end{align}
where the function M-terms$(z_1,z_2)$ is a linear combination of leading deterministic
functions in (\ref{M_term_error}) which satisfy the integral condition in (\ref{delta_int}).

\subsubsection{Fourth order terms}
By direct computations using (\ref{rule_1}) and (\ref{rule_2}), the fourth order terms $\mathcal{K}^{z_1,z_2}_{p+1,q}$ in 
(\ref{pq_order_terms}) with $p+q+1=4$ are averaged products of Green function entries satisfying the assignment condition in (\ref{pq_number}). From  Definition \ref{def:unmatch_shift}, these fourth order terms with restricted summations $a\neq \ud{B}$ are unmatched unless $p=1$ and $q=2$.  Then by Proposition \ref{lemma3}, we have
\begin{align}\label{unmatch_four}
	\sum_{p+q+1=4; p+1\neq q} \mathcal{K}^{z_1,z_2}_{p+1,q}=\sum_{p+q+1=4; p+1\neq q}
	\mathcal{K}^{z_1,z_2}_{p+1,q} \Big|_{a \neq \ud{B}}+\OO_\prec(n^{-1})=\OO_\prec(n^{-1}),
\end{align}
where the error $\OO_\prec(n^{-1})$ comes from the cases with index coincidence, \ie $a = \ud{B}$. 

We next estimate the remaining term $\mathcal{K}^{z_1,z_2}_{2,2}$ for $p=1$ and $q=2$ in (\ref{pq_order_terms}). By direct computations,  $\mathcal{K}^{z_1,z_2}_{2,2}$ is a linear combination of the following terms
\begin{align}\label{fourth_order_term}
	&\frac{1}{n^2} \sum_{a,B}\E\Big[ \wh{\F^{z_2}_t} (\ga_{BB}\ga_{aa})^2\Big], \qquad
	\frac{1}{n^2} \sum_{a, B}\E\Big[ \wh{ \F^{z_2}_t} \ga_{aa}\ga_{BB}\ga_{aB}\ga_{Ba}\Big],\nonumber\\
	&\frac{1}{n^2} \sum_{a, B}\E\Big[ \ga_{aa} \ga_{BB} \gb_{aa} \gb_{BB}\Big], 
	\quad 	\frac{1}{n^2} \sum_{a, B}\E\Big[ \ga_{aB} \gb_{Ba} \gb_{aa} \gb_{BB}\Big], 
	\quad 		\frac{1}{n^2} \sum_{a, B}\E\Big[ \ga_{aB} \ga_{aB} \gb_{Ba} \gb_{Ba}\Big]
\end{align}
as well as the other terms by interchanging $a$ with $B$ and $z_1$ with $z_2$.
Those terms in (\ref{fourth_order_term}) containing $G_{aa}$ or $G_{BB}$ in the product of Green function 
entries can be bounded using the improved estimate of resolvent in (\ref{im_estimate}) and the local law in (\ref{localg}) and (\ref{localm}), \eg the first term in (\ref{fourth_order_term}) is bounded by
\begin{align}\label{simple_1}
	\frac{1}{n^2} \sum_{a, B}\E\Big[ \wh{\F^{z_2}_t} (\ga_{BB}\ga_{aa})^2\Big] \prec \Psi^3  \frac{1}{n}\sum_{a}\E \big[ |\ga_{aa}| \big]=\Psi^3 \E[\Im \<\ga\>] =\OO_\prec(n^{-3/4+2\tau}),
\end{align}
where we used the estimate in (\ref{im_estimate}) with $\eta=n^{-7/8-\tau}$. 
The last term in (\ref{fourth_order_term}) is bounded similarly using the Ward identity, \ie
\begin{align}\label{simple_2}
	\frac{1}{n^2} \sum_{a, B}\E\Big[ \ga_{aB} \ga_{aB} \gb_{Ba} \gb_{Ba}\Big] \prec \frac{1}{n^2} \Psi^2 \sum_{a, B}\E[|\ga_{aB}|^2] \leq \Psi^2\frac{\E[\Im \<\ga\>]}{n\eta} =\OO_\prec( n^{-3/4+2\tau}).
\end{align}
Combining  these bounds with (\ref{unmatch_four}), we conclude that
\begin{align}\label{var4}
	\sum_{p+q+1=4} \mathcal{K}^{z_1,z_2}_{p+1,q} =\OO_\prec (n^{-3/4+2\tau}).
\end{align}

Therefore, using (\ref{deri_var}), (\ref{var5}), (\ref{var3}) and \eqref{var4}, we prove (\ref{middle_two}) and (\ref{delta_int}), hence finish the proof of the variance estimate in (\ref{two_point_result}).


\begin{appendix}
\section*{Proofs of Proposition~\ref{prop:tail}}
In this appendix  we prove 
a lower tail bound on the smallest eigenvalue of 
$$
Y^z:=(X-z)^*(X-z),
$$
which can also be viewed as the square of the smallest singular value of $X-z$ or as the smallest (in modulus) eigenvalue of $H^z$,
for a standard complex Ginibre matrix $X$.
Recall that the parameter $\delta:=|z|^2-1$  monitors the distance of $z$ from the unit circle. We point out that
in earlier papers \cite{MR4221653, MR4408004, 2105.13720, 2105.13719} we defined
$\delta$ with an opposite sign (i.e. $\delta:=1-|z|^2$) because in those works we were primarily focused on the regime where $|z|\le 1$.
Proposition~\ref{prop:tail} in the current paper our focus is on the regime $|z|>1$ so $\delta$ is positive with the new definition.

A simple redefinition of the variable $y$ shows that~(\ref{tail}) is equivalent to
\begin{equation}
	\label{eq:1a} 
	\mathbf{P}^{\mathrm{Gin}}
	\left((\lambda_1^z)^2\le \frac{x}{n^2\delta}\right)\lesssim \frac{x}{(n\delta^2)^{2/3}}  e^{-n\delta^2(1+\OO(\delta))/2},
	\qquad 0\le x\le C.
\end{equation}

We point out that the $(n\delta^2)^{-2/3}$ prefactor  in \eqref{eq:1a} is not optimal, but it is sufficient for our purposes. 
To make the presentation clearer here we present only the proof of the  simpler version  \eqref{eq:1a}, while
the following remark  explains the possible improvements.

\begin{remark}
	\label{rem:bettb}
	First, the bound \eqref{eq:1a} should hold all the way up to $x\le c (n\delta^2)^2$ 
	with some small constant $c$, corresponding to the fact that~(\ref{tail}) should hold up to $y\le c$, i.e.
	for an entire regime comparable with the gap size of order $\delta^{3}$
	in the spectrum of  $Y^z$. Second,
	we can improve the bound \eqref{eq:1a} to
	\begin{equation}
		\label{eq:bbb}
		\mathbf{P}^{\mathrm{Gin}}\left((\lambda_1^z)^2\le \frac{x}{n^2\delta}\right)
		\lesssim \frac{x}{n\delta^2}  e^{-n\delta^2(1+\OO(\delta))/2}, \qquad 0\le x\le \frac{C}{(n\delta^2)^2},
	\end{equation}
	by exploiting an extra improvement choosing a different contour along the proof
	(see Remark~\ref{rem:imprem} below for a detailed explanation). 
	A simple asymptotic expansion indicates that the bound~\eqref{eq:bbb} is actually optimal.
\end{remark}

\begin{remark}
	
	In Proposition~\ref{prop:tail} (and in Remark~\ref{rem:bettb}) we presented the bound 
	on $(\lambda_1^z)^2$ for $n^{-1/2}\ll\delta\ll 1$ to make our presentation more concise.
	However, a similar analysis  gives an analogous bound for $\delta\sim n^{-1/2}$ and
	$\delta\sim 1$ as well (see also \cite[Section 5.2]{MR4408004} for the case $\delta\sim n^{-1/2}$).
	
\end{remark}

This rest of this section is devoted to the proof of Proposition~\ref{prop:tail} in the form of~\eqref{eq:1a}.
We present two arguments. Our first proof with all details relies on an explicit formula for 
the eigenvalue correlation kernel for $Y^z$ from~\cite{MR2162782}. This approach 
is fairly elementary but it works only for the complex symmetry class. An alternative method
is based upon the supersymmetric (SUSY) representation for the resolvent in~\cite{MR4408004}
which also has a version for the real symmetry class.
We sketch the rigorous argument for the simpler complex case and  we comment on  the
considerably more cumbersome details of the real case. Note that~\eqref{eq:1a}
is formulated for the complex case, the factor $x$ is expected to 
be replaced with $\sqrt{x} + x \exp{( -\frac{n}{2}(\Im z)^2)}$ 
as it was the case in~\cite[Corollary  2.4]{MR4408004} for the $|z|\le 1$ regime (see also \cite{2105.13719,2105.13720}).

\begin{proof}[First proof of Proposition~\ref{prop:tail}]
	By \cite[Theorem 7.1]{MR2162782} the correlation kernel for $Y^z$ is given by (to make the notations consistent we set the dimension $N \equiv n$)
	\begin{equation}
		\label{eq:corrkera}
		K_n(u,v)=\frac{n^3}{\ii \pi}\int_\Gamma \dif \zeta \int_\gamma \dif w e^{n[f(w)-f(\zeta)]} 
		K_B(2n\zeta \sqrt{u}, 2nw \sqrt{v}) \zeta w\left(1-\frac{|z|^2}{(|z|^2-w^2)(|z|^2-\zeta^2)}\right),
	\end{equation}
	where $\Gamma$ is any contour symmetric around $0$ which encircles $\pm |z|^2$, $\gamma$
	is the imaginary axis positively oriented $0\to+\infty$, $0\to -\infty$, and
	\begin{equation}
		\label{eq:phasefa}
		f(w):=w^2+\log(|z|^2-w^2).
	\end{equation}
	Here, for any $x,y\in\mathbf{C}$, the kernel $K_B$ is defined by
	\begin{equation}
		\label{eq:Besskera}
		K_B(x,y)=\frac{xI_0'(x)I_0(y)-yI_0'(y)I_0(x)}{x^2-y^2},
	\end{equation}
	with $I_0(x)$ being the zeroth modified Bessel function:
	\begin{equation}
		\label{eq:intrepbesa}
		I_0(x):=\frac{1}{\pi}\int_0^\pi e^{x\cos \theta}\,\dif \theta.
	\end{equation}
	Note that
	\[
	K_B(x,y)=K_B(x,-y)=K_B(-x,y)=K_B(-x,-y)
	\]
	as a consequence of $I_0$, $I_0'$ being even and odd functions, respectively.
	
	We are interested in the case when $|z|^2=1+\delta$, with $1\gg \delta\gg n^{-1/2}$, and $u=v$.
	In this case the formula~\eqref{eq:corrkera} reduces to
	\begin{equation}
		\label{eq:1ptfa}
		\begin{split}
			K_n(u,u)&=\frac{2n^3}{\ii \pi}\int_\Gamma \dif \zeta \int_{\widehat{\gamma}} \dif w \, e^{n[f(w)-f(\zeta)]}
			K_B(2 n \zeta \sqrt{u}, 2n w \sqrt{u}) \zeta w\left(1-\frac{1+\delta}{(1+\delta-w^2)(1+\delta-\zeta^2)}\right), \\
			f(w)&=w^2+\log(1+\delta-w^2),
		\end{split}
	\end{equation}
	with $\widehat{\gamma}$ being the line $0\to \ii\infty$. We point out that here we used the symmetry 
	with respect to the variable $w$ of the integrand in \eqref{eq:corrkera} to replace the contour $\gamma$ by $\widehat{\gamma}$.

	The main technical result  is the following lemma:
	\begin{lemma}
		\label{lem:necin}
		For any $n^{-1/2}\ll \delta\ll 1$ and $u\lesssim 1/(n^2\delta)$ it holds
		\begin{equation}
			\label{eq:impba}
			K_n(u,u)\lesssim n^{4/3} \delta^{-1/3} e^{-n\delta^2(1+\OO(\delta))/2}.
		\end{equation}
	\end{lemma}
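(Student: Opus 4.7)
The approach is a saddle-point analysis of the double contour integral in~\eqref{eq:1ptfa}. Computing $f'(w) = 2w(\delta-w^2)/(1+\delta-w^2)$, the critical points of $f(w) = w^2+\log(1+\delta-w^2)$ are $w=0$ and $w=\pm\sqrt{\delta}$, with values $f(0) = \log(1+\delta) = \delta - \delta^2/2 + \OO(\delta^3)$ and $f(\pm\sqrt{\delta}) = \delta$. The exponential gap $n[f(0) - f(\pm\sqrt{\delta})] = -n\delta^2/2 + \OO(n\delta^3)$ produces exactly the factor $e^{-n\delta^2(1+\OO(\delta))/2}$ appearing in~\eqref{eq:impba}. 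The second derivatives $f''(0) = 2\delta/(1+\delta) \sim 2\delta$ and $f''(\pm\sqrt{\delta}) = -4\delta$ set the Gaussian scales of the local integrations to be of order $(n\delta)^{-1/2}$.

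The natural choice of contours is as follows. The $w$-path $\widehat\gamma$ is the positive imaginary axis, on which $\Re f(\ii x) = -x^2 + \log(1+\delta+x^2)$ is maximized at $x=0$, so $w=0$ is the dominant saddle of the $w$-integration. For the $\zeta$-contour I will deform $\Gamma$ (while keeping it around $\pm(1+\delta)$) so that it passes through the two saddles $\pm\sqrt{\delta}$ along directions of steepest descent; these are the dominant saddles of $-f(\zeta)$ since they produce the largest value $-\delta$ of $-\Re f$ along the allowed deformations. A key geometric check is that the deformed $\Gamma$ continues to encircle the logarithmic singularities at $\zeta^2 = 1+\delta$ while $\Re[f(w)-f(\zeta)]$ stays away from its saddle value off a small $(n\delta)^{-1/2}$-neighborhood of the saddles; the tails can then be estimated crudely by the exponential decay.

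A crucial structural feature is that the prefactor
\[
\zeta w\Bigl(1-\frac{1+\delta}{(1+\delta-w^2)(1+\delta-\zeta^2)}\Bigr)
\]
vanishes at the relevant saddle configuration $(w,\zeta) = (0,\pm\sqrt{\delta})$: the factor $w$ vanishes trivially, and the parenthesis vanishes because $(1+\delta-w^2)(1+\delta-\zeta^2) = (1+\delta)\cdot 1 = 1+\delta$ there. After the rescaling $\zeta = \pm\sqrt{\delta} + \alpha/\sqrt{n\delta}$, $w = \ii\beta/\sqrt{n\delta}$ that converts $n[f(w)-f(\zeta)]$ into a standard Gaussian in $(\alpha,\beta)$, the vanishing prefactor contributes extra powers of the fluctuation variables, reducing the naive size $n^3 \cdot (n\delta)^{-1}$ of the saddle-point integral. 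For the Bessel kernel, note that at the saddles with $u\lesssim (n^2\delta)^{-1}$ the arguments $2n\zeta\sqrt{u}$ and $2nw\sqrt{u}$ are $\OO(1)$, so $K_B$ is uniformly bounded via the integral representation~\eqref{eq:intrepbesa} of $I_0$.

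The main obstacle will be the bookkeeping: deforming $\Gamma$ appropriately while avoiding the singularities of $f$, keeping the tail contributions negligible uniformly in $u\in[0, C/(n^2\delta)]$, and tracking which combinations of the vanishing prefactor, the Bessel kernel, and the Gaussian fluctuation integrals assemble into the final prefactor $n^{4/3}\delta^{-1/3}$. This prefactor is not optimal (see Remark~\ref{rem:bettb}); a sharper contour choice would improve the $\delta$-power, but for the purposes of Proposition~\ref{prop:tail} this cruder form suffices, since combined with the trivial $\int_0^{x/(n^2\delta)} K_n(u,u)\,\dif u$ it yields the announced bound $x(n\delta^2)^{-2/3}e^{-n\delta^2(1+\OO(\delta))/2}$ in~\eqref{eq:1a}.
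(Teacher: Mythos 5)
Your proposal is essentially the paper's argument: a saddle-point analysis of the double contour integral~\eqref{eq:1ptfa} with the same saddles $w=0$, $\zeta=\pm\sqrt{\delta}$, the same exponential gap $n[f(0)-f(\pm\sqrt{\delta})]=-n\delta^2/2+\OO(n\delta^3)$, the same observation that the prefactor $\zeta w(1-\tfrac{1+\delta}{(1+\delta-w^2)(1+\delta-\zeta^2)})$ vanishes at the saddle configuration, and the same use of the boundedness of $K_B$ for arguments of size $\OO(1)$. The one genuine difference is your choice of $\zeta$-contour. The paper takes the $45^\circ$ wedge $\zeta=\sqrt{\delta}+t\pm\ii t$; since $f''(\sqrt{\delta})=-4\delta$ is real and $(1\pm\ii)^2$ is purely imaginary, the quadratic term contributes nothing to $\Re f$ along that direction and the local decay is only cubic, $\Re f(\sqrt{\delta}+t\pm\ii t)=\delta+4\sqrt{\delta}t^3+2t^4+\dots$, giving a local scale $t\sim(n\sqrt{\delta})^{-1/3}$ — this, not a Gaussian scale $(n\delta)^{-1/2}$, is precisely where the stated prefactor $n^{4/3}\delta^{-1/3}$ comes from. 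Your proposed steepest-descent direction restores genuine quadratic decay $\sim e^{-2n\delta t^2}$ and, carried through, yields the sharper bound $\lesssim n\delta^{-1}e^{-n\delta^2(1+\OO(\delta))/2}$ of Remark~\ref{rem:bettb}, which of course implies~\eqref{eq:impba} since $n\delta^2\gg1$; the price is a slightly more delicate global deformation keeping $\Gamma$ around $\pm\sqrt{1+\delta}$ (this is exactly the $c>1$ contour of Remark~\ref{rem:imprem}). The only point you should not dismiss as mere bookkeeping is the tail regime: there $|K_B|$ grows like $e^{2n(|\zeta|+|w|)\sqrt{u}}\sim e^{t/\sqrt{\delta}}$, and one must check that this is beaten by the monotone increase of $\Re f(\zeta)$ (resp.\ decrease of $\Re f(w)$) along the contours, as in~\eqref{eq:estlarge}.
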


	Hence, for any $0\le x\lesssim 1$ we compute
	\[
	\mathbf{P}^{\mathrm{Gin}}\left((\lambda_1^z)^2\le \frac{x}{n^2\delta}\right)\le\int_0^{x/(n^2\delta)} 
	K(\lambda,\lambda)\, \dif \lambda\lesssim \frac{x}{(n\delta^2)^{2/3}} e^{-n\delta^2(1+\OO(\delta))/2},
	\]
	which concludes the proof  of~\eqref{eq:1a}, hence Proposition~\ref{prop:tail}.
\end{proof}

We now conclude this section with the proof of Lemma~\ref{lem:necin}.

\begin{proof}[Proof of Lemma~\ref{lem:necin}]
	
	By explicit computations we get
	\[
	f'(w)=2w\left(1-\frac{1}{1+\delta-w^2}\right).
	\]
	We thus find that the saddles of $f$, i.e. the solutions of $f'(w_*)=0$, 
	are given by $w_*\in\{0,\pm \sqrt{\delta}\}$. Additionally, by Taylor expansion, we get
	\begin{equation}
		\label{eq:in1a}
		f(\zeta)=\delta-\frac{\delta^2}{2}+\delta \zeta^2-\frac{\zeta^4}{2}+\OO(\delta^3+|\zeta|^6).
	\end{equation}

	\noindent	
	{\it Step 1: Deformation of the contours.}
	We parametrize the $\widehat{\gamma}$-contour as $w=\ii s$, with $s\ge 0$, then
	\[
	f(\ii s)=-s^2+\log(1+\delta+s^2).
	\]
	Note that by \eqref{eq:in1a} it follows
	\begin{equation}
		\label{eq:newsexp}
		\Re[f( \ii s)]=f( \ii s)=\delta-\frac{\delta^2}{2}-\delta s^2-\frac{s^4}{2}+\OO(\delta^3+s^6).
	\end{equation}
	Additionally, simple calculus shows that the map
	\begin{equation}
		\label{eq:ina}
		s\mapsto \Re f(\ii s)
	\end{equation}
	is decreasing for $s\ge 0$. In particular, this implies $f(\ii s)\le f(0)=\log(1+\delta)$ for any $s \ge 0$. 
	
	We choose the contour $\Gamma$ to consist of two disjoint closed curves around
	$\sqrt{1+\delta}$ and $-\sqrt{1+\delta}$, respectively.
	We focus on the contour encircling  $\sqrt{1+\delta}$, the other one can be handled in the same way, hence we neglect
	it from the discussion.
	Next, we parametrize the part of the $\Gamma$-contour lying on the region $\Re\zeta>0$ as $\zeta= \sqrt{\delta}+ t\pm\ii t$, 
	with $t\ge 0$. The curve may be closed with a circular arc $|\zeta|= R$ with some  very large $R$, this regime 
	of integration is negligible; for practical purposes we consider $R=\infty$.	Note that by \eqref{eq:in1a} we get
	\begin{equation}
		\label{eq:newphf}
		\Re f(\sqrt{\delta}+ t\pm\ii t)=\delta+2t^4+4\sqrt{\delta}t^3+\OO(\delta^3+t^6).
	\end{equation}
	Additionally, by an elementary calculation, we have that
	\begin{equation}
		\label{eq:incra}
		t\mapsto \Re f(\sqrt{\delta}+ t\pm\ii  t)=\delta+2\sqrt{\delta}t+\frac{1}{2}\log\left[1-4\sqrt{\delta} t+ 8t^2\delta+4t^4+8\sqrt{\delta} t^3\right]
	\end{equation}
	is a strictly increasing function on $t\ge 0$, as a consequence of
	\[
	\frac{\dif}{\dif t} \Re f(\sqrt{\delta}+ t\pm\ii t)\ge 0, \qquad t\ge  0;
	\]
	the equality holds only for $t=0$.
	
	Finally, along the chosen contours we compute
	\begin{align}
		\label{eq:expcompa}
		&1-\frac{1+\delta}{(1+\delta-w^2)(1+\delta-\zeta^2)}\nonumber\\
		=&\frac{-(1+\delta)(2\sqrt{\delta}t\pm 2\ii(\sqrt{\delta}+t)t+s^2)-(\delta+2\sqrt{\delta}t\pm 2\ii (\sqrt{\delta}+t)t)s^2}{(1+\delta+s^2)(1-2\sqrt{\delta}t-\pm 2\ii t (\sqrt{\delta}+t))}.
	\end{align}

	\noindent
	{\it Step 2. Estimates of the integrals along the contours.}
	We split the analysis into four regimes: $(s,t)\in [0, K \sqrt{\delta}]^2$, $(s,t)\in [0, K \sqrt{\delta}]
	\times (K\sqrt{\delta},+\infty)$, $(s,t)\in (K\sqrt{\delta},+\infty)\times[0, K \sqrt{\delta}]$, $(s,t)\in (K\sqrt{\delta},+\infty)^2$.
	Here $K>0$ is a large constant independent of $n$ and $\delta$ that we will choose shortly.
	
	\noindent \textbf{Regime $(s,t)\in [0, K \sqrt{\delta}]^2$:} In this regime we start with the following expansion
	for the kernel $K_B$:
	\begin{equation}
		\label{eq:kbappr}
		K_B(x,y)=\frac{1}{2\pi^2}+\OO(|x|^2+|y|^2),
	\end{equation}
	which follows by standard asymptotic of modified Bessel functions for $|x|,|y|\lesssim 1$.
	
	Since in our regime $u\lesssim 1/(n^2\delta) $ and we have
	\[
	\left|1-\frac{1+\delta}{(1+\delta-w^2)(1+\delta-\zeta^2)}\right|\lesssim \sqrt{\delta}t+s^2
	\]
	for $s,t\le K\sqrt{\delta}$, by \eqref{eq:expcompa}, together with $w=\ii s$ and $\zeta= \sqrt{\delta}+ t\pm\ii t$
	we find that for $(s,t)\in [0, K \sqrt{\delta}]^2$ it holds
	\begin{equation}
		\label{eq:b1b}
		\begin{split}
			K_n(u,u)&\lesssim n^3  \delta \left(\int_0^{K\sqrt{\delta}} s e^{n\delta-n\frac{\delta^2+s^4+2\delta s^2}{2}
				+\OO(n\delta^3)} \, \dif s\right)\left(\int_0^{K\sqrt{\delta}} t e^{-n\delta-2nt^4-4n\sqrt{\delta}t^3+\OO(n\delta^3)}\,\dif t\right) \\
			&\quad+ n^3  \sqrt{\delta} e^{-n\delta}\int_0^{K\sqrt{\delta}} s^3 e^{n\delta-n\frac{\delta^2+s^4+2\delta s^2}{2}
				+\OO(n\delta^3)} \, \dif s \\
			&\lesssim n^3\delta e^{-n\delta^2(1+\OO(\delta)/2}
			\left(\int_0^{K\sqrt{\delta}} s e^{-n\delta s^2}\, \dif s \right)\left(\int_0^{K\sqrt{\delta}} t e^{-n\sqrt{\delta}t^3} \dif t\right) \\
			&\quad+n^3\sqrt{\delta} \int_0^{K\sqrt{\delta}} s^3 e^{-n\delta s^2}\,\dif s \\
			&\lesssim  n^{4/3}\delta^{-1/3} e^{-n\delta^2(1+\OO(\delta))/2}.
		\end{split}
	\end{equation}
	where we also used \eqref{eq:newsexp} and \eqref{eq:newphf}.
	
	\begin{remark}
		\label{rem:imprem}
		The improved bound \eqref{eq:bbb} (compared to \eqref{eq:1a})
		can by achieved by choosing the $\widehat{\gamma}$-contour as in Step 1, 
		i.e. $w= \ii s$, and the $\Gamma$-contour to be any admissible contour which is given by $\zeta=\sqrt{\delta}+t\pm \ii c t$, 
		with some $1<c\le 2$, for $t\ll \sqrt{\delta}$ and by $\zeta=\sqrt{\delta}+t\pm \ii  t$ for $t\gg \sqrt{\delta}$.
		In particular an additional gain of a factor $(n\delta^2)^{-1/3}$ is due to the fact that for this new $\zeta$-contour
		the expansion in \eqref{eq:newphf} is replaced by
		\[
		\Re f(\sqrt{\delta}+ t\pm\ii ct)=\delta+2(c^2-1)t^2\delta+t^4\left(3c^2-\frac{1}{2}-\frac{c^4}{2}\right)
		+2(3c^2-1)\sqrt{\delta}t^3+\OO(\delta^3+t^6).
		\]
		In particular, the term $2(c^2-1)t^2\delta$, which non vanishes only for $c>1$, in the exponent
		ensures an additional gain $(n\delta^2)^{-1/3}$ compared to \eqref{eq:b1b} 
		where we only gained using the smaller (for $t\ll \sqrt{\delta}$) factor  $2(3c^2-1)\sqrt{\delta}t^3$.
		
	\end{remark}

	\noindent \textbf{Regime $(s,t)\in [0, K \sqrt{\delta}]\times (K\sqrt{\delta},+\infty)$:} We start with the bound
	\begin{equation}
		\label{eq:bkba}
		\big|K_B(x,y)\big|\lesssim e^{|y|},
	\end{equation}
	for $|x|\lesssim 1$. We remark that a similar bound holds for $|y|\lesssim 1$ after replacing $y$ by $x$ in the r.h.s. of \eqref{eq:bkba}.
	
	Define
	\begin{equation}
		\label{eq:defga}
		g(t):= \Re f(\sqrt{\delta}+ t\pm\ii t)-\delta,
	\end{equation}
	then, by \eqref{eq:incra}, it follows that $t\mapsto g(t)$ is strictly increasing on $t\ge 0$. 
	Hence, using \eqref{eq:ina} together with $f(0)=\log(1+\delta)$, we get
	\[
	e^{n[f(w)-f(\zeta)]}\le e^{-n g(t)}\le e^{-Kn\delta^2/4}e^{-n g(t)/2},
	\]
	where we used \eqref{eq:incra} to estimate one of the two $e^{-ng(t)/2}$ factors.
	
	Then, using that
	\[
	|1+\delta-(\sqrt{\delta}+ t\pm\ii t)^2|^2\gtrsim 1-4\sqrt{\delta} t+ 8t^2\delta+4t^4+8\sqrt{\delta} t^3\ge \frac{1}{2},
	\]
	we readily conclude
	\begin{equation}
		\label{eq:estlarge}
		K_n(u,u)\lesssim n^3\delta K^2 e^{-Kn\delta^2/4} \int_{K\sqrt{\delta}}^\infty
		e^{-ng(t)/2} e^{t/\sqrt{\delta}} t^3\, \dif t\lesssim e^{-Kn\delta^2/8},
	\end{equation}
	where we used \eqref{eq:bkba},  \eqref{eq:defga}, and that 
	\[
	\left| \frac{\delta-(\zeta^2+w^2)+\zeta^2w^2}{(1+\delta-w^2)(1+\delta-\zeta^2)}\right|\lesssim t^2
	\]
	uniformly in $t$ in this regime. To ensure that the error term in \eqref{eq:estlarge}
	is smaller than our goal in \eqref{eq:impba} we choose $K\ge 5$.
	
	\noindent \textbf{Regimes $(s,t)\in (K\sqrt{\delta},+\infty)\times[0, K \sqrt{\delta}]$ and $(s,t)\in (K\sqrt{\delta},+\infty)^2$:} 
	Given the bound
	\[
	\big|K_B(x,y)\big|\lesssim e^{|x|+|y|},
	\]
	and using the monotonicity properties \eqref{eq:ina},\eqref{eq:incra} of $\Re f$ along the contours chosen in 
	Step 1,  the analysis of these regimes is analogous to the regime $(s,t)\in [0, K \sqrt{\delta}]\times (K\sqrt{\delta},+\infty)$ 
	and so omitted. In particular, the contribution of both these regimes is bounded as in \eqref{eq:estlarge}. 
	Combining this fact with \eqref{eq:b1b} and \eqref{eq:estlarge} we conclude the proof of \eqref{eq:impba}.	
\end{proof}

Next we sketch the alternative  proof relying on SUSY.

\begin{proof}[Second proof of Proposition~\ref{prop:tail}] The  starting point is the following 
	contour integral representation of the trace of the resolvent of  $Y^z=(X-z)^*(X-z)$ for
	a complex Ginibre matrix $X$ at any  spectral parameter $w= E+\ii \eps$, with $E\in\R$, $\eps>0$,
	(see \cite[Eq. (28)]{MR4408004}):
	\begin{equation}\label{susy}
		\begin{split}
			\E^{\mathrm{Gin}} \Tr\frac{1}{ Y^z-w} & = \frac{n^2}{2\pi\ii} \int_0^{\ii \infty} \dif x\oint \dif y
			\; e^{-n f(x)+ nf(y)} y\cdot G(x,y), \\
			G(x,y) & : = \frac{1}{xy} -\frac{1}{(1+x)(1+y)}\Big[ 1 +  \frac{|z|^2}{1+x} +  \frac{|z|^2}{1+y} \Big],\\
			f(x) &:= \log \frac{1+x}{x} -  \frac{|z|^2}{1+x}  - wx,
		\end{split}
	\end{equation}
	where the $x$-integration is over the half imaginary axis and the $y$-integration is over a positively oriented circle around 
	the origin that does not enclose $-1$. Since the integrand is analytic away  from 0 and $-1$, the integration
	contours can be freely deformed away from these two  singularities. We need to  investigate  the imaginary part of
	\eqref{susy} in the regime where
	\begin{equation}\label{regime}
		0\le E^{1/3} \ll n^{-1/2} \ll \delta = |z|^2-1, \qquad \eps = +0,
	\end{equation}
	to detect the density $\rho(E)$ of eigenvalues $(\lambda^z)^2$ of $Y^z$ at $E\ll n^{-3/2}$ that would directly
	imply (\ref{tail}).
	Here $\eps$ is an infinitesimally small positive regularization parameter, its only role is to specify in which direction the $x$-contour 
	goes out to infinity.  
	
	Typically, the large $n$ asymptotics of such contour  integral is obtained by saddle point analysis. Both contours
	are deformed through the saddle point $x_*$ of $f$, defined by $f'(x_*)=0$, where a second order Taylor expansion is performed
	both for $f$ and $G$ and the main contribution comes from the value of these functions and their
	derivatives at the saddle. The exponential factors cancel and the result is
	typically polynomial in $n$.
	Among others, this strategy is followed in our analysis in~\cite{MR4408004} for the regime $\delta=|z|^2-1<0$,
	where we found that the saddle has a non-zero imaginary part.
	The current regime~\eqref{regime} behaves quite differently since now $E\ll \delta^3$ lies  outside of the support
	of  $\frac{1}{\pi}\Im m^z(x+\ii 0)$ (see (\ref{eq:scmde})),
	which implies that the relevant saddle $x_*$ is on the positive real axis, in fact by a simple calculation
	we have\footnote{ In the first displayed formula in Section 6.2 of~\cite{MR4408004}
		we erroneously claimed that $x_*\approx 3\delta^{-1}/2$ in the regime $E\ll \delta^3$, the correct
		behavior is $x_*\approx \delta^{-1}$. This wrong factor does not influence the arguments in~\cite{MR4408004}.}
	\begin{equation}\label{saddle}
		x_* = \delta^{-1}\big[ 1+ (E/\delta^3) + \OO (E/\delta^3)^2\big]
	\end{equation}
	for the unique positive solution to $f'(x_*)=0$. 
	
	The spectral density at $E$ is given by
	\begin{equation}\label{susy1}
		\rho(E):=\E^{\mathrm{Gin}} \frac{1}{\pi} \Im \Tr\frac{1}{ Y^z-E-\ii 0}  =  \frac{1}{2\pi\ii}
		\E^{\mathrm{Gin}} \Big[ \Tr\frac{1}{ Y^z-E-\ii 0} - \Tr\frac{1}{ Y^z-E+\ii 0}\Big],
	\end{equation}
	i.e. we need to evaluate the difference of two copies of~\eqref{susy} with an opposite sign in front of the regularization $\eps$.
	Note that $G$ and large part of $f$ is independent of $\eps$, this parameter appears only as a $\pm \ii  \eps x$ term in 
	$f(x)$ and  is relevant only for the non-compact $x$-integration as $\epsilon$  is infinitesimally small.
	We deform the $x$-contour to $\gamma_\pm : =
	[0, a] \cup [a, a\pm \ii \infty]$, where $a>x_*$ is a large real parameter, i.e. we first go from the origin along the real axis 
	to $a$ and then we move  vertically up or down depending on the sign in front of $\eps=+0$ in \eqref{susy1}.
	When taking the difference in~\eqref{susy1}, the contributions of the $x$-integration from the horizontal segment $[0,a]$ 
	exactly cancel. The only contribution comes from the opposite vertical $x$-integration regimes, that can now be
	estimated separately,  yielding the bound
	\begin{equation}\label{susy2}
		\rho(E)\lesssim n^2 \Big| \int_{a }^{a+\ii \infty} \dif x \oint \dif y
		\; e^{-n f(x)+ nf(y) } y\cdot G(x,y) \Big|
	\end{equation}
	that we need to estimate in the regime $E\lesssim n^{-2}\delta^{-1}$ in order to prove~\eqref{eq:1a}.
	We choose $a:=(nE)^{-1}$ and note that $a\gg  \delta^{-1}$ since $n\delta^2\gg1$, i.e.  $a\gg x_*$ by using~\eqref{saddle}.
	Thus by deforming the $y$ contour  to pass through the 
	saddle $x_*$, the two contours will not intersect, analogously to the situation in Step 1 of the
	previous proof. 
	
	The rest of the computation is a standard saddle point analysis for the $y$-integration. 
	Using~\eqref{saddle}, in our
	regime of parameters we have
	$$
	f(x_*) = -\frac{\delta^2}{2} (1+  \OO (\kappa)), \quad f''(x_*)   =3\delta^4(1+ \OO(\kappa)), \quad x_*\cdot G(x, x_*) = 
	\frac{\delta^2}{x}\Big[1+  \OO\Big(\frac{1}{\delta |x|} \Big)\Big]
	$$
	uniformly, whenever $x=a+\ii t$, $t\in [0,\infty)$, with a small parameter $\kappa:=\delta+1/(n\delta^2)\ll1$.
	This yields
	\begin{equation}\label{susyest}
		\begin{split}
			\rho(E) & \lesssim \frac{n^2}{\sqrt{ nf''(x_*) }} e^{nf(x_*)} \Big| \int_{a }^{a+\ii \infty} \dif x 
			\; e^{-n f(x) } x_* \cdot G(x,x_*) \Big| \\
			&\lesssim n^{3/2}e^{-n\delta^2(1+\OO(\delta))/2} \Big| \int_{a }^{a+\ii \infty} \dif x 
			\; \frac{e^{-n f(x) }}{x} \Big[1+  \OO\Big(\frac{1}{\delta |x|} \Big)\Big]\Big|,
		\end{split}
	\end{equation}
	assuming for the moment that the main contribution comes from the $y$-region around the saddle.
	
	In the large $x$ regime, where $|x|= |a+\ii t|\gg  \delta^{-1}$ we have
	the expansion
	\begin{equation}\label{fexp}
		f(x)=-\frac{\delta}{1+x}      - Ex  + \OO (|x|^{-2}).
	\end{equation}
	Note that
	$$
	-n\Re f(a+\ii t) \le \frac{n\delta}{|a+\ii t|} + nEa  \lesssim1, \qquad t\in [0,\infty),
	$$
	therefore the error terms in the integrand can be handled trivially and we have
	\begin{equation}\label{intest}
		\Big| \int_{a }^{a+\ii \infty} \dif x 
		\; \frac{e^{-n f(x) }}{x} \Big[1+  \OO\Big(\frac{1}{\delta |x|} \Big)\Big]\Big| \lesssim
		\Big| \int_{0 }^{\infty} \dif t  \; \frac{e^{\ii nE t }}{a+\ii t}\Big| + \Big| \int_{0 }^{\infty} \dif t  \; \frac{\delta^{-1}+n\delta}{|a+\ii t|^2}\Big|
		\lesssim 1
	\end{equation}
	using $n\delta/a = n^2 E \delta\lesssim 1$. This yields $\rho(E)\lesssim n^{3/2}e^{-n\delta^2(1+\OO(\delta))/2}$
	in the regime $E\lesssim n^{-2}\delta^{-1}$, which gives~\eqref{eq:1a} with a slightly weaker $(n\delta^2)^{-1/2}$
	prefactor instead of $(n\delta^2)^{-2/3}$. 
	
	Finally, the $y$-integration in the regime away from the saddle is estimated  by using monotonicity of 
	$\Re f(y)$ along an appropriate contour found by plotting the level sets of $\Re f$. We omit these uninteresting details.
\end{proof}

Compared with~\eqref{susy}, for the real case an analogous but more involved representation formula holds,
see \cite[Eq. (34)--(36)]{MR4408004}.  It carries an additional integration variable $\tau\in[0,1]$ 
related to the nontrivial dependence on $\Im z$. The phase function $f(y)$ involving the integration variable 
on the compact contour in~\eqref{susy} is also present in the real case; this gives the critical $e^{-n\delta^2/2}$
factor exactly as in the complex case. The analogue of the phase function $f(x)$ for the non-compact integration 
(called $g$ in~\cite{MR4408004}) 
will now depend on the additional parameter $\tau$, but for the relevant regime of very large $|x|$
its asymptotic expansion is similar to $f(x)$ in~\eqref{fexp}.
Both phase functions depend trivially on $\eps$, hence we have exactly the same
cancellation effect  in~\eqref{susy1} as in the complex case, thus
we  indeed need to consider only the large $|x|$ regime. The precise estimates analogous to~\eqref{susyest}--\eqref{intest}
and the control of the regimes far away from the saddle are  more cumbersome and we
do not pursue them in this paper.

\end{appendix}

\begin{funding}

 The second and the fourth author were supported by the ERC Advanced Grant ``RMTBeyond'' No.~101020331. The third author was supported by Dr.\ Max R\"ossler, the Walter Haefner Foundation and the ETH Z\"urich Foundation.

\end{funding}



\bibliographystyle{imsart-number} 
\bibliography{extracted}       


\end{document}